\renewcommand{\Re}{\operatorname{Re}}
\renewcommand{\Im}{\operatorname{Im}}
\def\@tocline#1#2#3#4#5#6#7{\relax
  \ifnum #1>\c@tocdepth 
  \else
    \par \addpenalty\@secpenalty\addvspace{#2}%
    \begingroup \hyphenpenalty\@M
    \@ifempty{#4}{%
      \@tempdima\csname r@tocindent\number#1\endcsname\relax
    }{%
      \@tempdima#4\relax
    }%
    \parindent\z@ \leftskip#3\relax \advance\leftskip\@tempdima\relax
    \rightskip\@pnumwidth plus4em \parfillskip-\@pnumwidth
    #5\leavevmode\hskip-\@tempdima
      \ifcase #1
       \or\or \hskip 1em \or \hskip 2em \else \hskip 3em \fi%
      #6\nobreak\relax
    \dotfill\hbox to\@pnumwidth{\@tocpagenum{#7}}\par
    \nobreak
    \endgroup
  \fi}
\newcommand{\bC}{\mathbb{C}}
\newcommand{\cD}{\mathscr{D}}
\newcommand{\E}{\mathbb E }
\newcommand{\R}{\mathbb{R}}
\newcommand{\N}{\mathbb{N}}
\newcommand{\C}{\mathbb{C}}
\newcommand{\Z}{\mathbb{Z}}
\renewcommand{\P}{\mathbb{P}}
\newcommand{\1}{\mathbbm{1}}
\newcommand{\Var}{\mathop{\mathrm{Var}}\nolimits}
\newcommand{\supp}{\mathop{\mathrm{supp}}\nolimits}
\newcommand{\eee}{{\rm e}}
\newcommand{\ii}{{\rm{i}}}
\newcommand{\toweak}{\overset{w}{\underset{n\to\infty}\longrightarrow}}
\newcommand{\ton}{\overset{}{\underset{n\to\infty}\longrightarrow}}
\newcommand{\unif}{{\rm Unif}}
\newcommand{\dd}{{\rm d}}
\newcommand{\stirling}[2]{\genfrac{[}{]}{0pt}{}{#1}{#2}}
\newcommand{\eps}{\varepsilon}
\theoremstyle{plain}
\newtheorem{theorem}{Theorem}[section]
\newtheorem{proposition}[theorem]{Proposition}
\newtheorem{lemma}[theorem]{Lemma}
\newtheorem{corollary}[theorem]{Corollary}
\theoremstyle{definition}
\newtheorem{definition}[theorem]{Definition}
\newtheorem{example}[theorem]{Example}
\theoremstyle{remark}
\newtheorem{remark}[theorem]{Remark}
\begin{document}

\title[Zeros and exponential profiles of polynomials I]{Zeros and exponential profiles of polynomials I:\\ Limit distributions, finite free convolutions\\ and repeated differentiation}

\begin{abstract}
Given a sequence of polynomials $(P_n)_{n \in \mathbb{N}}$ with only nonpositive zeros, the aim of this article is to present a user-friendly approach for determining the limiting zero distribution of $P_n$ as $\deg P_n \to \infty$. The method is based on establishing an equivalence between the existence of a limiting empirical zero distribution $\mu$ and the existence of an exponential profile $g$ associated with the coefficients of the polynomials $(P_n)_{n \in \mathbb{N}}$. The exponential profile $g$, which can be roughly described by $[z^k]P_n(z) \approx \exp\big(n g(k/n)\big)$, offers a direct route to computing the Cauchy transform $G$ of $\mu$: the functions $t \mapsto tG(t)$ and $\alpha \mapsto \exp\big(-g'(\alpha)\big)$ are mutual inverses. This relationship, in various forms, has previously appeared in the literature, most notably in the paper [Van Assche, Fano and Ortolani, SIAM J. Math. Anal., 1987].

As a first contribution, we present a self-contained probabilistic proof of this equivalence by representing the polynomials as generating functions of sums of independent Bernoulli random variables. This probabilistic framework naturally lends itself to tools from large deviation theory, such as the exponential change of measure. The resulting theorems generalize and unify a range of previously known results, which were traditionally established through analytic or combinatorial methods.

Secondly, using the profile-based approach, we investigate how the exponential profile and the limiting zero distribution behave under certain operations on polynomials, including finite free convolutions, Hadamard products, and repeated differentiation. In particular, our approach yields new proofs of the convergence results `$\boxplus_n \to \boxplus$' and `$\boxtimes_n \to \boxtimes$', extending them to cases where the  distributions are not necessarily compactly supported.

This paper is the first in a series of three. In Part II, we shall explore a wide range of examples, covering many classical families of polynomials, while Part III will focus on random polynomials and establish functional limit theorems for their random profiles.
\end{abstract}

\author{Jonas Jalowy}
\address{Jonas Jalowy: Paderborn University, Institute of Mathematics, Warburger Str. 100, 33098 Paderborn, Germany}
\email{jjalowy@math.upb.de}

\author{Zakhar Kabluchko}

\address{Zakhar Kabluchko: Institute of Mathematical Stochastics, Department of Mathematics and Computer Science, University of M\"{u}nster, Orl\'{e}ans-Ring 10, D-48149 M\"{u}nster, Germany}
\email{zakhar.kabluchko@uni-muenster.de}

\author{Alexander Marynych}
\address{Alexander Marynych: School of Mathematical Sciences, Queen Mary University of London, Mile End Road, London E14NS, United Kingdom; Faculty of Computer Science and Cybernetics, Taras Shevchenko National University of Kyiv, Kyiv 01601, Ukraine}
\email{o.marynych@qmul.ac.uk, marynych@knu.ua}

\keywords{Polynomials, zeros, coefficients, finite free probability, exponential profile, large deviations, sums of Bernoulli random variables, Cauchy transform, convolutions of polynomials, repeated differentiation, free convolution.}
\subjclass[2020]{Primary: 26C10; Secondary: 60F10, 46L54,  30C10, 30C15, 60B10.}
\maketitle

\tableofcontents

\section{Introduction}

Polynomials are fundamental objects in mathematics, with wide-ranging applications in both theoretical and applied sciences. A central challenge in their study is understanding the relationship between a polynomial's coefficients and its zeros. The forward direction, deriving symmetric functions of the roots from the coefficients, is straightforward and classically captured by Vieta's formulas. In contrast, the inverse problem of inferring properties of the zeros based on the coefficients, has driven mathematical inquiry for centuries, leading to the development of entire fields such as Galois theory and numerical analysis. Even for polynomials of small degree, computing exact zeros is often a complex and computationally demanding task.

Rather than focusing on the exact zeros of individual polynomials, this paper considers sequences of polynomials of increasing degree. While studying individual zeros can be difficult, analyzing their collective statistical behavior through empirical distributions offers a more tractable and insightful approach. The empirical distribution captures the global
behavior of the zeros with negligible influence of individual zeros.

Throughout this paper, we work with polynomials over the field of real numbers and, unless stated otherwise, the term ``polynomial'' refers to a polynomial with real coefficients. We exclude the identically zero polynomial from consideration.

\begin{definition}\label{def:empirical_distr}
The \textit{empirical distribution of zeros} of a polynomial $P\not\equiv 0$ of degree at most $n$ is the probability measure $\lsem P \rsem_n$ on $\overline{\C}:=\C\cup\{\infty\}$ which assigns equal weight $1/n$ to each zero (counting multiplicities) and places the remaining mass at $\infty$. Thus, letting $\delta_x$ denote the Dirac measure at $x\in\overline{\mathbb{C}}$, we define
\begin{equation}\label{eq:empirical_distr_zeros_def}
\lsem P \rsem_n := \frac 1n \sum_{\substack{z\in \C\,:\, P(z) = 0}} \delta_z+ \frac{n-\deg P}{n}\delta_{\infty}.
\end{equation}
\end{definition}
As mentioned earlier, we shall work with a sequence of polynomials $(P_n)_{n\in\N}$ rather than with a single polynomial $P$, and the variable $n$ in Definition~\ref{def:empirical_distr} should be understood as the variable indexing the sequence $(P_n)_{n \in \N}$. In many cases, $n$ coincides with the degree of $P_n$, but in general, we shall assume only $\deg P_n \leq n$. The compactification of the support of $\lsem P_n\rsem_n$, as described above, proves useful even when \(\deg P_n = n\), as it enables us to capture the asymptotic behavior of zeros that may ``escape to infinity'' as \(n \to \infty\). We are particularly interested in polynomials whose zeros are all real and share the same sign. In this setting, we identify the point at infinity, $\infty$, with either $+\infty$ (when the roots are nonnegative) or $-\infty$ (when the roots are nonpositive). Accordingly, $\lsem P\rsem_n$ is supported on $[0,+\infty]$ in the former case, and on $[-\infty,0]$ in the latter. We also consider a slightly more general setting in which $\lsem P\rsem_n$ is supported on either $[-A, +\infty]$ or $[-\infty, A]$ for some $A \geq 0$, with the same convention on the point at infinity applied.

It turns out that the following asymptotic characteristic of a sequence of polynomials $(P_n)_{n\in\N}$, called an \emph{exponential profile}, plays a central role in determining the limiting empirical distribution of their zeros via their coefficients.

\begin{definition}\label{def:exp_profile}
For every $n\in \N$ let $P_n(x) = \sum_{k=0}^n a_{k:n} x^k$ be a polynomial of degree at most $n$ with nonnegative coefficients $a_{k:n}\ge 0$.  We say that the sequence $(P_n)_{n\in \N}$ has an \emph{exponential profile} $g$  if $g:(\underline{m},\overline{m})\to \R$ is a function defined on a nonempty interval $(\underline{m},\overline{m})\subseteq [0,1]$  such that
\begin{equation}\label{eq:exp_profile_definition}
\lim_{n\to\infty}\frac{1}{n}\log a_{\lfloor \alpha n \rfloor:n}=
\begin{cases}
g(\alpha),&\text{if }\alpha\in (\underline{m},\overline{m}),\\
-\infty,&\text{if }\alpha\in [0,\underline{m})\cup (\overline{m},1].
\end{cases}
\end{equation}
Here, and in the following, we stipulate that $\log 0 = -\infty$.
\end{definition}

In Section~\ref{sec:zeros_profiles}, we shall present two theorems (Theorems~\ref{theo:exp_profile_implies_zeros} and~\ref{theo:zeros_imply_exp_profile}) that essentially state that for a sequence of polynomials $(P_n)_{n \in \mathbb{N}}$ with only nonpositive roots, the sequence of empirical measures $\lsem P_n \rsem_n$ converges weakly if and only if the sequence $(P_n)_{n \in \mathbb{N}}$ possesses an exponential profile. Furthermore, the limiting measure can be recovered from the profile $g$, and conversely, the profile $g$ can be recovered from the limiting measure. While these theorems are not new and have been known in the literature in various degrees of generality, to the best of our knowledge, our results are the most general ones. A review of previous results will be provided in Section~\ref{sec:literature1}.

In contrast to the analytic methods previously used, our approach to proving the aforementioned equivalence adopts a probabilistic perspective. Specifically, we interpret polynomials with only nonpositive roots as generating functions for sums of independent Bernoulli random variables. In this framework, the coefficients correspond to the probability mass function of the sum of these Bernoulli variables, which take values in \(\{0,1\}\). This probabilistic viewpoint allows us to leverage powerful tools from probability theory, such as large deviation techniques and the exponential change of measure, to investigate the relationship between the zeros and the coefficients. The probabilistic proofs of Theorems~\ref{theo:exp_profile_implies_zeros} and~\ref{theo:zeros_imply_exp_profile} will be given in Section~\ref{sec:proof_main1}.

Even though the proposed method for investigating the empirical distribution of zeros via profiles is highly versatile and applicable to a broad class of classical polynomial ensembles, in this paper we focus on specific applications of the aforementioned equivalence. A unifying source for limiting zero distributions of a great number of examples will be~\cite{jalowy_kabluchko_marynych_zeros_profiles_part_II}, where we apply our approach to hypergeometric polynomials including the classical Hermite, Laguerre and Jacobi polynomials as well as Touchard, Fubini, Eulerian and Narayana polynomials. Here, we concentrate on two particular applications of the method, each of which is of independent interest. Namely:
\begin{enumerate}
    \item In Section~\ref{sec:convolutions}, we study the behavior of zeros under various operations on polynomials, collectively referred to as `convolutions'. In particular, we demonstrate how the method of profiles can be employed to characterize the limiting empirical distribution of zeros under Hadamard convolution, as well as finite free additive and multiplicative convolutions~\cite{marcus2021polynomial,marcus_spielman_srivastava}. These operations are known to preserve real-rootedness.   These problems have been studied by~\citet{arizmendiperales} and~\citet{arizmendi}  who used a combinatorial approach based on finite free cumulants.
    \item In Section~\ref{sec:rep_diff}, we examine the dynamics of real zeros under repeated differentiation, yet another operation that preserves real-rootedness.
        Starting with the work of Steinerberger~\cite{steinerberger_real,Steiner21}, this problem, along with its relatives,  has received much attention and has been studied from different viewpoints  in~\cite{alazard_lazar_nguyen,arizmendi,bogvad_etal,CampbellAppell,COR23,COR24,feng_yao,galligo,galligo_najnudel_dynamics_randomized,galligo_najnudel_vu,gorin_marcus,diff-paper,HK21,kabluchko_rep_diff_free_poi,kiselev_tan,martinezfinkelshtein2024flowzerospolynomialsiterated,martinezfinkelshtein2025weightedequilibriumflowderivatives,michelen_vu,michelen_vu_almost_sure,orourke_steinerberger_nonlocal,shlyakhtenko_tao,totik_critical,totik_distribution_of_critical}.
\end{enumerate}

Finally, we note that the method of profiles proves particularly powerful when applied to random polynomials. The central idea is to relate the random fluctuations of profiles (expressed through a functional limit theorem) to the rate of convergence of the empirical zero distribution. Since the current paper is already quite dense, a detailed exploration of this direction is deferred to~\cite{jalowy_kabluchko_marynych_zeros_profiles_part_III}, the third installment in this series.

The remainder of the paper is structured as follows. In Section~\ref{sec:zeros_profiles}, we present our main tools for the method of profiles, specifically Theorems~\ref{theo:exp_profile_implies_zeros} and~\ref{theo:zeros_imply_exp_profile}. Section~\ref{sec:convolutions} investigates polynomial convolutions. In particular, Theorems~\ref{theo:finite_free_additive_conv_free_additive} and~\ref{theo:finite_free_mult_conv} provide new proofs of the convergence relations `$\boxplus_n \to \boxplus$' and `$\boxtimes_n \to \boxtimes$'. Section~\ref{sec:rep_diff} is devoted to analyzing repeated differentiation of real-rooted polynomials via the method of profiles. The main result of this section, Theorem~\ref{theo:repeated_diff_general}, characterizes the limiting empirical distribution of zeros for the polynomials $(z^a ({\rm d}/{\rm d}z)^b)^{\ell}P_n(z)$, where $\ell/n$ converges to a positive constant. Proofs of the main results are provided in Sections~\ref{sec:proof_main1},~\ref{sec:proof_convolutions}, and~\ref{sec:proof_repeated}.

\section{Zeros and profiles of polynomials}\label{sec:zeros_profiles}
\subsection{Main results on zeros and profiles}
We are interested in the limiting empirical distribution of zeros. As we shall see, the most convenient framework to characterize the weak limit of $\lsem P \rsem_n$ is via Cauchy transforms.

\begin{definition}
For any probability measure $\mu$ on $\R$ its \emph{Cauchy transform} $G_{\mu}:\C\backslash \supp(\mu)\to\C$ is defined by
\begin{align}
G_{\mu}(t)=\int_{\R}\frac{\mu({\rm d}z)}{t-z},\quad t\in\C\backslash \supp(\mu).
\end{align}
\end{definition}
By the Stieltjes--Perron inversion formula, see~\cite[Proposition 2.1.2 on p.~35]{pastur_shcherbina_book} and~\cite[pp.~124--125]{akhiezer_book}, the Cauchy transform uniquely determines the measure $\mu$ via
$$
\mu (I) =
-\lim_{y\to 0+} \int_I\frac 1 \pi\Im G_{\mu}(x + iy){\rm d}x,
$$
where $I$ is any interval such that $\mu$ is continuous at its endpoints. Furthermore, locally uniform convergence on $\mathbb{C}\backslash\R$ of the sequence of Cauchy transforms is equivalent to weak convergence of the corresponding sequence of probability measures. We shall omit the index $\mu$ and simply write $G$ for the Cauchy transform of $\mu$ if the measure is clear from the context.

Observe that the definition of Cauchy transforms extends naturally to probability measures on $(-\infty,+\infty]$ or $[-\infty,+\infty)$. However, one has to be careful with two-sided compactifications. For example, two different probability measures $\mu_1(\pm\infty)=1/2$ and $\mu_2(+\infty)=1$ have identical Cauchy transforms $G_{\mu_1}=G_{\mu_2}\equiv 0$. Fortunately, in this paper we shall not encounter such problems for all the measures appearing throughout the paper are supported either by $(-\infty,+\infty]$ or $[-\infty,+\infty)$. For such measures, a value of a possible atom at $\pm\infty$ can be identified using the formula
$$
1 - \mu(\{+\infty\})-\mu(\{-\infty\}) = \lim_{t\to+\infty} (\ii t) G_{\mu}(\ii t).
$$
If $\mu$ is supported by $(-\infty,A]$ (respectively, $[A,+\infty)$) for some $A\in\mathbb{R}$, then
\begin{equation}\label{eq:atom_at_infinity_over_R}
1 -\mu(\{-\infty\}) = \lim_{t\to+\infty} t G_{\mu}(t)\quad\quad \left(\text{respectively, }1 -\mu(\{+\infty\}) = \lim_{t\to-\infty} t G_{\mu}(t)\right).
\end{equation}

We are now ready to formulate a result which is our first main tool.

\begin{theorem}[Profile for coefficients implies limiting distribution of zeros]
\label{theo:exp_profile_implies_zeros}
For every $n\in \N$ let $P_n(x) = \sum_{k=0}^n a_{k:n} x^k$ be a polynomial of degree at most $n$. Suppose that the sequence $(P_n)_{n\in\N}$ possesses an exponential profile $g:(\underline{m},\overline{m})\to\mathbb{R}$ in the sense of Definition~\ref{def:exp_profile}. If, additionally, all roots of $P_n$ are nonpositive for all sufficiently large $n$,  then the following hold.
\begin{enumerate}[(a)]
\item We have weak convergence
$$\lsem P_n\rsem_n\toweak\mu$$
of probability measures on $[-\infty,0]$ for some $\mu$.
\item The measure $\mu$ is uniquely determined by the profile $g$ as follows. If $G_{\mu}$ denotes the Cauchy transform of $\mu$, then $t\mapsto tG_{\mu}(t)$, $t>0$, is the inverse of the function\footnote{The derivative $g'$ exists on $(\underline{m},\overline{m})$, see part (d) below.} $\alpha\mapsto\eee^{-g'(\alpha)}$, $\alpha\in (\underline{m},\overline{m})$. The function $t\mapsto tG_{\mu}(t)$ is a strictly increasing continuous bijection between $(0,+\infty)$ and $(\underline{m},\overline{m})$.
\item Furthermore,
\begin{equation}\label{eq:main_polys_converse_assumption_p_n(1)}
\lim_{n\to\infty}\frac{\log P_n(1)}{n}=\sup_{\alpha\in (\underline{m},\overline{m})} g(\alpha) = :M_g.
\end{equation}
and the function $-g(\cdot) + M_g$ is the Legendre transform of $u\mapsto \Psi(\eee^u)$, where
\begin{equation}\label{eq:psi_for_polys}
\Psi(t)=\int_{(-\infty,0]}\log \left(\frac{t-z}{1-z}\right) \mu({\rm d}z),\quad t>0,
\end{equation}
is the normalized logarithmic potential of $\mu$. This means that $g(\alpha) = M_g+\inf_{u\in \R} ( \Psi(\eee^u)-\alpha u )$ for all $\alpha \in (\underline{m},\overline{m})$.
\item The function $g$ is automatically infinitely differentiable and strictly concave on $(\underline{m},\overline{m})$. Moreover,  even the function $\alpha\mapsto g(\alpha)+\alpha\log\alpha$ remains strictly concave.
\item It holds $\mu(\{0\})=\underline{m}$ and $\mu(\{-\infty\})=1-\overline{m}$.
\end{enumerate}
\end{theorem}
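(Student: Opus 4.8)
The plan is to use that a polynomial with only nonpositive zeros is, after normalization, the probability generating function of a sum of independent Bernoulli variables, and to deduce all five assertions from large deviation theory. Writing $P_n(x)=a_{d:n}\prod_{j=1}^{d}(x+\lambda_j)$ with $\lambda_j\ge0$ and $d=\deg P_n$, one has $P_n(x)/P_n(1)=\prod_{j}\frac{x+\lambda_j}{1+\lambda_j}=\E[x^{S_n}]$, where $S_n=\xi_1+\dots+\xi_d$ and the $\xi_j$ are independent with $\P(\xi_j=1)=1/(1+\lambda_j)$. Hence $a_{k:n}/P_n(1)=\P(S_n=k)$, and the profile records the local large deviations of $S_n/n$: once $M_g:=\lim_n\frac1n\log P_n(1)$ is known to exist, $\frac1n\log\P(S_n=\lfloor\alpha n\rfloor)\to g(\alpha)-M_g$. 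I would also record at the outset that real-rootedness makes the coefficients $a_{k:n}$ log-concave (Newton's inequalities), so $k\mapsto\frac1n\log a_{k:n}$ is concave; passing to the pointwise limit shows that $g$ is concave, a fact used repeatedly below.

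First I would compute the limiting free energy. Put $\Lambda(u):=\sup_{\alpha\in(\underline m,\overline m)}(u\alpha+g(\alpha))$, the Legendre transform of the convex function $-g$. A Laplace estimate for $P_n(\eee^u)=\sum_k a_{k:n}\eee^{uk}$ gives $\frac1n\log P_n(\eee^u)\to\Lambda(u)$ for all $u$; at $u=0$ this is the first equality of part (c), $M_g=\sup_\alpha g(\alpha)$, and therefore $\varphi_n(u):=\frac1n\log\frac{P_n(\eee^u)}{P_n(1)}\to\varphi(u):=\Lambda(u)-M_g$. Each $\varphi_n$ is convex, so convergence forces $\varphi_n'(u)\to\varphi'(u)$ wherever $\varphi$ is differentiable; since $\varphi_n'(u)=\frac{\eee^u P_n'(\eee^u)}{nP_n(\eee^u)}=tG_n(t)$ with $t=\eee^u$ and $G_n=G_{\lsem P_n\rsem_n}$, this yields convergence of $tG_n(t)$ on $(0,\infty)$. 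The $G_n$ are uniformly bounded on compact subsets of $\C\setminus[-\infty,0]$, so a normal-families argument promotes this to locally uniform convergence of Cauchy transforms, which is equivalent to $\lsem P_n\rsem_n\toweak\mu$ for a probability measure $\mu$ on $[-\infty,0]$ — this is part (a). Inserting this limit back into $\varphi_n(u)=\int_{[-\infty,0]}\log\frac{\eee^u-z}{1-z}\,\lsem P_n\rsem_n(\dd z)$ (the integrand is continuous on $[-\infty,0]$ and vanishes at $-\infty$) identifies $\varphi(u)=\Psi(\eee^u)$, so that $\Lambda=\Psi(\eee^{\,\cdot})+M_g$.

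The remaining parts are now duality and calculus. Since $-g$ is convex and $\Lambda=(-g)^*$, Fenchel–Moreau gives $-g(\alpha)+M_g=\sup_u(\alpha u-\Psi(\eee^u))$, which is precisely the Legendre-transform statement in part (c). Next, $\Lambda'(u)=tG_\mu(t)$, and differentiating under the integral sign shows $\frac{\dd}{\dd t}(tG_\mu(t))=\int_{(-\infty,0]}\frac{-z}{(t-z)^2}\,\mu(\dd z)>0$; thus $\Lambda'$ is real-analytic and strictly increasing, so $\Lambda$ is smooth and strictly convex, and transferring this regularity to the conjugate $-g=\Lambda^*$ shows that $g$ is $C^\infty$ and strictly concave, giving the first half of part (d). Differentiability of $g$ then turns Legendre duality into the pointwise statement $tG_\mu(t)=\alpha\iff t=\eee^{-g'(\alpha)}$, i.e. $t\mapsto tG_\mu(t)$ and $\alpha\mapsto\eee^{-g'(\alpha)}$ are mutual inverses, as in part (b). Computing the endpoints $\lim_{t\to0+}tG_\mu(t)=\mu(\{0\})$ and $\lim_{t\to\infty}tG_\mu(t)=1-\mu(\{-\infty\})$ and matching them with the range $(\underline m,\overline m)$ simultaneously establishes the strictly increasing continuous bijection claimed in (b) and the atom formulas $\mu(\{0\})=\underline m$, $\mu(\{-\infty\})=1-\overline m$ of part (e). For the last assertion of (d), note that $t/\alpha=1/G_\mu(t)$ is increasing because $G_\mu$ is strictly decreasing on $(0,\infty)$; translated through $g'(\alpha)=-\log t(\alpha)$ this reads $g''(\alpha)+1/\alpha<0$, i.e. $\alpha\mapsto g(\alpha)+\alpha\log\alpha$ is strictly concave.

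The main obstacle I anticipate is the Laplace step, and with it the passage from merely pointwise profile convergence to the uniform upper bound $\frac1n\log P_n(\eee^u)\le\Lambda(u)+o(1)$ and to the differentiability of $\varphi$. The decisive input is again log-concavity from Newton's inequalities: a sequence of concave functions converging pointwise on $(\underline m,\overline m)$ converges locally uniformly there, which both legitimizes the Laplace asymptotics and forces the limit $\varphi=\Psi(\eee^{\,\cdot})$ to be differentiable, so that the convexity, envelope and inverse-function arguments apply. Handling the regime where $g\equiv-\infty$ — ensuring the corresponding coefficients do not contribute to the sum and that no mass of $\lsem P_n\rsem_n$ escapes to $0$ or $-\infty$ in an uncontrolled way — is the other point requiring care, and is likewise controlled by the log-concave decay of the coefficients.
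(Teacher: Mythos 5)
Your proposal is correct in substance but follows a genuinely different route from the paper's own proof. The paper, too, interprets $P_n(x)/P_n(1)$ as the generating function of a Bernoulli sum and uses Newton's inequalities plus hypo-convergence of concave interpolations (Rockafellar--Wets) to prove $\frac1n\log P_n(1)\to M_g$ --- that step is essentially identical to your Laplace step at $u=0$ --- but from there it reduces everything to a converse probabilistic theorem (Theorem~\ref{thm:main_probabilistic_converse}): tightness of the parameter measures $\frac1n\sum_k\delta_{p_{k:n}}$ on $[0,1]$, identification of each subsequential limit by means of the already-proved direct theorem (Theorem~\ref{thm:main_probabilistic_direct}, whose proof rests on exponential tilting and Bender's local CLT), and uniqueness via the involution property of the Legendre transform together with uniqueness of Cauchy transforms. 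You instead argue directly on the free energies $\varphi_n(u)=\frac1n\log\bigl(P_n(\eee^u)/P_n(1)\bigr)$: convexity of $\varphi_n$, convergence of derivatives of convex functions, and the identity $\varphi_n'(u)=tG_n(t)$ with $t=\eee^u$ turn profile convergence into convergence of $tG_n(t)$; normal families/compactness of measures on $[-\infty,0]$ and uniqueness of Cauchy transforms then give (a), and Fenchel--Moreau duality plus elementary calculus give (b)--(e). Your route avoids the local limit theorem and the exponential change of measure entirely, so it is softer and more self-contained for this direction; the paper's route reuses machinery it needs anyway for the converse (Theorem~\ref{theo:zeros_imply_exp_profile}) and delivers finer conclusions along the way (uniform convergence of profiles, the coefficient-ratio asymptotics~\eqref{eq:P_s_n_k_quotients_polys}, and the groundwork for the functional limit theorems of Part III).

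Two points in your write-up need tightening, though both are fixable. First, a priori the convex limit $\varphi$ is differentiable only off a countable set, so derivative convergence gives $tG_n(t)\to\varphi'(\log t)$ only on a dense set of $t>0$; that is enough for your compactness/uniqueness argument, and \emph{afterwards} the continuity of $t\mapsto tG_\mu(t)$ forces the one-sided derivatives of $\varphi$ to agree everywhere, so $\varphi\in C^1$ and the identification $\varphi'(u)=tG_\mu(t)$ holds for all $u$. Your closing claim that locally uniform convergence of concave functions ``forces the limit to be differentiable'' is false as stated and must be replaced by this a posteriori argument --- otherwise parts (b) and (d) rest on a circular step. Second, the strict inequality $\frac{\dd}{\dd t}\bigl(tG_\mu(t)\bigr)=\int_{(-\infty,0]}\frac{-z}{(t-z)^2}\,\mu(\dd z)>0$ requires $\mu((-\infty,0))>0$; this does hold, but needs a one-line justification: if $\mu$ charged only $\{0\}\cup\{-\infty\}$, then $\varphi$ would be linear, $\Lambda$ affine, and its conjugate $-g$ finite at a single point only, contradicting the assumption that the profile is finite on a nonempty open interval $(\underline{m},\overline{m})$.
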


\begin{remark}
The usual logarithmic potential $U$ (which may be not well defined for unboundedly supported  $\mu$ and is undefined if $\mu(\{-\infty\})>0$) is given by
$$
U(t):=\int_{(-\infty,0]}\log |t-z|\mu({\rm d}z)=\Re\left(\int_{(-\infty,0]}\log (t-z)\mu({\rm d}z)\right)
$$
and satisfies $\Psi(t)=U(t)-U(1)$ for real $t>0$. Up to an additive constant, the logarithmic potentials $\Psi$ and $U$ are antiderivatives of $G_{\mu}$, satisfying $\Psi^{\prime}(t)=G_{\mu}(t)$ for $t>0$.
\end{remark}

Our second main tool is the converse of Theorem~\ref{theo:exp_profile_implies_zeros}, revealing that existence of limit zero distributions and exponential profiles are equivalent up to a normalization of the polynomial which leaves distribution of zero unchanged.

\begin{theorem}[Distribution of zeros implies exponential profile]
\label{theo:zeros_imply_exp_profile}
For every $n\in \N$ let $P_n(x) = \sum_{k=0}^n a_{k:n} x^k$ be a polynomial of degree at most $n$ with real, nonpositive roots. If $\lsem P_n\rsem_n$ converges weakly to some probability measure $\mu$ on $[-\infty, 0]$, then the following hold.
\begin{enumerate}[(a)]
\item Define $\underline{m}:= \mu(\{0\})$ and $\overline{m} := 1-\mu(\{-\infty\})$ and observe that $0\leq \underline{m} \leq \overline{m}\leq 1$.  There exists a strictly concave and infinitely differentiable function $g:(\underline{m},\overline{m})\to (-\infty,0]$ such that
\begin{equation}\label{eq:zeros_imply_exp_profile_claim}
\sup_{(\underline{m}+\eps) n \leq k  \leq (\overline{m}-\eps)n}
\left|\frac 1n  \log \frac {a_{k :n}}{P_n(1)} -g\left(\frac k  n\right)  \right| \ton 0,
\end{equation}
for all sufficiently small $\eps > 0$ as well as
\begin{equation}\label{eq:divergence_outside_polys}
\sup_{0\leq k  \leq (\underline{m}-\eps)n} \frac 1n   \log \frac {a_{k :n}}{P_n(1)} \ton  -\infty,
\qquad
\sup_{(\overline{m}+\eps)n \leq k  \leq n} \frac 1n   \log \frac {a_{k :n}}{P_n(1)} \ton  -\infty.
\end{equation}
\item If $\underline{m}  < \overline{m}$, then the Cauchy transform $G_{\mu}$ of $\mu$ is such that $(0,+\infty)\ni t\mapsto tG_{\mu}(t)$ is the inverse of $(\underline{m},\overline{m})\ni\alpha\mapsto \eee^{-g'(\alpha)}$. 
\item In particular, if $\underline{m}  < \overline{m}$, then $g'(\underline{m}+)=\lim_{\alpha \to \underline{m}+} g'(\alpha) = +\infty$ and $g'(\overline{m}-)=\lim_{\alpha  \to \overline{m}-} g'(\alpha) = -\infty$.
\item Analogously\footnote{Observe that $g$ in~\eqref{eq:zeros_imply_exp_profile_claim} is defined as the profile of the normalized polynomials $P_n(x)/P_n(1)$. Thus, $M_g=\sup_{\alpha\in (\underline{m},\overline{m})}g(\alpha)=0$, see~\eqref{eq:main_polys_converse_assumption_p_n(1)}.}, the function $-g$ is the Legendre transform of $u\mapsto \Psi(\eee^u)$ defined by~\eqref{eq:psi_for_polys}.
\item Furthermore,
\begin{equation}\label{eq:P_s_n_k_quotients_polys}
\sup_{(\underline{m}+\varepsilon)n\leq k \leq (\overline{m}-\varepsilon)n}\left|\frac{a_{k +1:n}}{a_{k :n}} -\eee^{g'\left(\frac {k }{n}\right)}\right|\ton 0.
\end{equation}
Moreover, writing $\underline{r}_n := \inf\{k\geq 0:\,a_{k:n}\neq 0\}$, we have
\begin{equation}\label{eq:P_s_n_k_quotients_infinity_polys}
\inf_{\underline{r}_n\leq k \leq (\underline{m}-\varepsilon) n} \frac{a_{k +1:n}}{a_{k :n}}
\ton +\infty
\quad\text{and}\quad
\inf_{(\overline{m}+\varepsilon)n\leq k  < \deg P_n} \frac{a_{k :n}}{a_{k +1:n}}
\ton +\infty.
\end{equation}
\item If $\underline{m}=\overline{m}$, then~\eqref{eq:divergence_outside_polys} and~\eqref{eq:P_s_n_k_quotients_infinity_polys} hold true.
\end{enumerate}
\end{theorem}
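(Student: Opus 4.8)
The plan is to use the probabilistic representation at the core of the paper's method. Writing $P_n(x)=a_{d:n}\prod_{j=1}^{d}(x+\lambda_j)$ with $d=\deg P_n$ and $\lambda_j\ge 0$, the normalized polynomial becomes a probability generating function,
\begin{equation*}
\frac{P_n(x)}{P_n(1)}=\prod_{j=1}^{d}\frac{x+\lambda_j}{1+\lambda_j}=\E\big[x^{S_n}\big],\qquad S_n=\sum_{j=1}^{d}B_j,
\end{equation*}
where the $B_j$ are independent with $B_j\sim\Bern(1/(1+\lambda_j))$, so that $a_{k:n}/P_n(1)=\P(S_n=k)$. The desired profile $g$ is then (minus) the local large deviation rate of $S_n/n$. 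As a first step I would prove convergence of the scaled cumulant generating functions: the integrand $z\mapsto\log\frac{\eee^u-z}{1-z}$ is bounded and continuous on $[-\infty,0]$ (equal to $0$ at $z=-\infty$), so $\lsem P_n\rsem_n\toweak\mu$ gives
\begin{equation*}
\varphi_n(u):=\frac1n\log\frac{P_n(\eee^u)}{P_n(1)}=\int_{[-\infty,0]}\log\frac{\eee^u-z}{1-z}\,\lsem P_n\rsem_n(\dd z)\ton\Psi(\eee^u)=:\varphi(u).
\end{equation*}
Since each $\varphi_n$ is convex, pointwise convergence upgrades to locally uniform convergence with convergence of derivatives, whence $\tfrac1n\, tP_n'(t)/P_n(t)\to tG_\mu(t)$ for $t=\eee^u$, using $\Psi'=G_\mu$.

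Next I would study $t\mapsto tG_\mu(t)$. Dominated convergence yields $\lim_{t\to0+}tG_\mu(t)=\mu(\{0\})=\underline m$, and \eqref{eq:atom_at_infinity_over_R} yields $\lim_{t\to+\infty}tG_\mu(t)=1-\mu(\{-\infty\})=\overline m$; its derivative equals the limiting scaled tilted variance, which is positive whenever $\underline m<\overline m$. Thus $t\mapsto tG_\mu(t)$ is a smooth strictly increasing bijection from $(0,+\infty)$ onto $(\underline m,\overline m)$, and for each such $\alpha$ one defines $t(\alpha)$ by $t(\alpha)G_\mu(t(\alpha))=\alpha$. The central device is the exponential change of measure: tilting $S_n$ by $t=t(\alpha)$ replaces the $B_j$ by independent $\tilde B_j\sim\Bern(t/(t+\lambda_j))$ and produces, for every $k$, the identity
\begin{equation*}
\frac1n\log\frac{a_{k:n}}{P_n(1)}=\varphi_n(\log t)-\frac kn\log t+\frac1n\log\tilde\P(\tilde S_n=k).
\end{equation*}
With $t=t(\alpha)$ the index $k=\lfloor\alpha n\rfloor$ sits at the mean of $\tilde S_n$, since the scaled tilted mean converges to $tG_\mu(t)=\alpha$.

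The \emph{main obstacle} is the remaining local estimate $\tfrac1n\log\tilde\P(\tilde S_n=k)\to0$, to be established uniformly for $k/n$ in compact subintervals of $(\underline m,\overline m)$. This is a local limit theorem for a triangular array of non-identically distributed Bernoulli variables, and the difficulty is that some factors degenerate (success probability near $0$ or $1$, corresponding to roots escaping to $-\infty$ or clustering at $0$); these must be isolated and the remaining sum handled by a quantitative local CLT whose variance grows linearly on the relevant range. Granting this, passing to the limit gives $g(\alpha)=\Psi(t(\alpha))-\alpha\log t(\alpha)$, and the uniform version of the local estimate yields the $\sup$-form \eqref{eq:zeros_imply_exp_profile_claim}. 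Differentiating and using $\alpha=t(\alpha)G_\mu(t(\alpha))$ makes the two derivative terms cancel, so $g'(\alpha)=-\log t(\alpha)$; this is precisely the inversion statement (b), and letting $\alpha\to\underline m+$ and $\alpha\to\overline m-$ (so $t\to0+$ and $t\to+\infty$) gives the infinite boundary slopes (c). Smoothness of $g$ comes from that of $t(\alpha)$ and $\Psi$, strict concavity from the strict monotonicity of $g'=-\log t(\alpha)$; together with $g\le0$ and $\sup g=0$, this also identifies $-g$ with the Legendre transform of $\varphi$, giving (d).

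It remains to collect the divergence and ratio statements. The estimates \eqref{eq:divergence_outside_polys} are Chernoff-type upper bounds: from $\P(S_n\le k)\le t^{-k}P_n(t)/P_n(1)$ with a fixed small $t\in(0,1)$ and $\varphi_n(\log t)\to\Psi(t)$, the presence of the atom $\mu(\{0\})=\underline m$ (so that $\Psi(t)\sim\underline m\log t$ as $t\to0+$) forces the bound to $-\infty$ uniformly over $k\le(\underline m-\eps)n$; the range $k\ge(\overline m+\eps)n$ is symmetric, using large $t$ and the atom at $-\infty$. For \eqref{eq:P_s_n_k_quotients_polys} I would apply the tilting identity to two consecutive indices, giving $a_{k+1:n}/a_{k:n}=t^{-1}\,\tilde\P(\tilde S_n=k+1)/\tilde\P(\tilde S_n=k)$, where the tilted ratio tends to $1$ by the ratio form of the local CLT and $t^{-1}=\eee^{g'(k/n)}$; the blow-up relations \eqref{eq:P_s_n_k_quotients_infinity_polys} follow from the same Chernoff bounds. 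Finally, in the degenerate case (f), $\underline m=\overline m$ forces $\mu=\underline m\,\delta_0+(1-\underline m)\delta_{-\infty}$, so asymptotically there are no nondegenerate Bernoulli factors, $S_n$ concentrates near $\underline m n$, and both \eqref{eq:divergence_outside_polys} and \eqref{eq:P_s_n_k_quotients_infinity_polys} follow directly from these concentration bounds.
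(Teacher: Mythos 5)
Your proposal follows essentially the same route as the paper's proof: read $P_n(x)/P_n(1)$ as the generating function of a sum of independent Bernoulli variables, analyze the coefficients by exponential tilting at $\theta=t(\alpha)$, control the tilted probabilities by a local CLT, handle the outer ranges by Chernoff bounds, and identify $-g$ as a Legendre transform (the paper packages all of this as a standalone result, Theorem~\ref{thm:main_probabilistic_direct}, and then transports $\lsem P_n\rsem_n\toweak\mu$ to weak convergence of the Bernoulli-parameter measures under $x\mapsto 1/(1-x)$). The step you flag as the main obstacle, namely $\tfrac1n\log\tilde\P(\tilde S_n=k)\to 0$ uniformly on compact subranges, is indeed the technical core, and in your write-up it is granted rather than proved. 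It can be closed more simply than you suggest: no isolation of degenerate factors is needed. One invokes the local limit theorem of \citet{bender} for sums of independent indicators, whose only hypothesis is that the variance diverges; under the tilted measure the variance equals $n\int_{[0,1]}\theta_* y(1-y)(1-y+y\theta_*)^{-2}M_n(\dd y)$, which grows linearly in $n$ uniformly over $k/n\in[\underline{m}+\eps,\overline{m}-\eps]$, because $\underline{m}<\overline{m}$ forces $\int_{[0,1]} y(1-y)M_\infty(\dd y)>0$ and $\theta_*(k,n)$ stays in a compact subset of $(0,\infty)$. So this gap is fillable exactly along the paper's lines.

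The step that would actually fail as written is your treatment of the ratio blow-up~\eqref{eq:P_s_n_k_quotients_infinity_polys}: it does not ``follow from the same Chernoff bounds'', nor ``directly from concentration'' in case (f). Chernoff bounds control the size of the individual probabilities $\P[S_n=k]$ in the sub-exponential region, but say nothing about \emph{consecutive ratios} there: without further structure, the ratios $a_{k+1:n}/a_{k:n}$ could oscillate, or even be smaller than $1$, while all the $a_{k:n}/P_n(1)$ stay below $\eee^{-An}$. The missing ingredient is the log-concavity of the coefficient sequence of a real-rooted polynomial with nonnegative coefficients (Newton's inequalities), which makes $k\mapsto a_{k+1:n}/a_{k:n}$ nonincreasing for each fixed $n$. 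With it, in the case $\underline{m}<\overline{m}$ the blow-up on $\{\underline{r}_n,\ldots,\lfloor(\underline{m}-\eps)n\rfloor\}$ follows from~\eqref{eq:P_s_n_k_quotients_polys} together with $g'(\underline{m}+)=+\infty$ and monotonicity of the ratios; in case (f), where no interior profile is available, one needs in addition a pigeonhole argument: some $k_*(n)$ satisfies $\P[S_n=k_*(n)]\geq 1/(n+1)$, this $k_*(n)$ must lie in $[(\underline{m}-\eps)n,(\overline{m}+\eps)n]$ by the Chernoff bounds, and comparing $\P[S_n=k_*(n)]$ with $\P[S_n=\lfloor(\underline{m}-\eps)n\rfloor]\leq\eee^{-An}$ along the chain of monotone ratios forces the leftmost ratio (and hence, again by monotonicity, all ratios to its left) to exceed $\eee^{A-o(1)}$. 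This is precisely how the paper argues, and your proposal cannot bypass that structural input.
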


\begin{remark}
In the above statements of Theorem \ref{theo:zeros_imply_exp_profile} and throughout the paper, we stipulate that $\eps>0$ is sufficiently small to make the sets over which a supremum or an infimum is taken nonempty.
\end{remark}

\begin{remark}[A non-rigorous argument]\label{rem:non_rig_argument_profile_zeros}
Assuming the existence of an exponential profile $g$ of $P_n(x)$ and the limit zero distribution $\mu$, let us provide a quick (but non-rigorous) derivation of the formula $g(\alpha) = M_g + \inf_{u\in \R} (\Psi(\eee^u)-\alpha u )$ in Theorem~\ref{theo:exp_profile_implies_zeros}. Assume for simplicity that $\deg P_n=n$. Observe that, for positive $x>0$, with $-\lambda_{1:n}\leq 0, \ldots, -\lambda_{n:n}\leq 0$ denoting zeros of $P_n$, we have
$$
\frac 1n \log \frac{P_n(x)}{P_n(1)} = \frac 1n \sum_{k=1}^n \log \frac{x + \lambda_{k:n}}{1+\lambda_{k:n}}\ton \int_{(-\infty,0]}\log \left(\frac{x-z}{1-z}\right)\mu(\dd z)=\Psi(x).
$$
On the other hand, since we assume existence of the profile $g$ of $P_n$,
$$
P_n(x) = \sum_{k=0}^n a_{k:n} x^k= \sum_{k=0}^n \eee^{ng(k/n) + k \log x + o(n)}.
$$
Formally, by a Laplace method we expect, for $x>0$,
$$
\frac 1n \log P_n(x) \ton \sup_{\alpha \in (\underline{m},\,\overline{m})} \left(g(\alpha)+\alpha \log x\right)
$$
and, plugging $x=1$,
$$
\frac 1n \log P_n(1) \ton \sup_{\alpha \in (\underline{m},\,\overline{m})} g(\alpha)=M_g.
$$
Therefore, for $u\in\mathbb{R}$,
$$
\Psi(\eee^u)=\sup_{\alpha \in (\underline{m},\,\overline{m})} \left(g(\alpha)+\alpha u\right)-M_g.
$$
Therefore, $u\mapsto \Psi(\eee^u)$ is the Legendre transform of $-g(\cdot)+M_g$, or, equivalently, $-g(\cdot)+M_g$ is the Legendre transform of $u\mapsto \Psi(\eee^u)$.
\end{remark}

In the next result we characterize the class of functions that can appear as exponential profiles of nonpositive-rooted polynomials. It is similar to Proposition~6.3 in~\cite{bercovici_voiculescu} which characterizes the class of $\chi$-transforms, and to Bochner's theorem for characteristic functions.

\begin{proposition}[Characterization of profiles]\label{prop:char_profiles}
Let $g:(\underline{m},\overline{m})\to \R$ be a function defined on a nonempty interval $(\underline{m},\overline{m})\subseteq [0,1]$. Then, $g$ is an exponential profile of some sequence $(P_n(x))_{n\in \N}$ of polynomials with only nonpositive roots in the sense of Definition~\ref{def:exp_profile} if and only if the following hold:
\begin{itemize}
\item [(i)] $g$ is infinitely differentiable and strictly concave on $(\underline{m},\overline{m})$ with $g'(\underline{m}+) = +\infty$, $g'(\overline{m}-) = -\infty$;
\item [(ii)] $\eee^{g'}$ admits an extension to an analytic, bijective function $\eee^{g'}: \cD \to \{z\in \C: \Re z >0\}$ defined in some domain $\cD\subseteq \C$ which is invariant with respect to complex conjugation and satisfies $\cD \cap \R = (\underline{m},\overline{m})$ and, moreover, the matrix
\begin{equation}\label{eq:nevanlinna_pick_matrix_proof_char_profiles}
\left(\frac{y_j \eee^{g'(y_j)} - \overline{y_k} \eee^{g' (\overline{y_k})}}{\eee^{g'(y_j)} - \eee^{g'(\overline{y_k})}}\right)_{j,k=1}^\ell
\end{equation}
is positive semi-definite for all $\ell\in \N$ and all  $y_1,\ldots, y_\ell \in \cD$ with $\Re y_1>0,\ldots, \Re y_\ell>0$.
\end{itemize}
\end{proposition}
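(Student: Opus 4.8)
\emph{Overview.} The plan is to transfer the whole statement to the auxiliary map
\[
H(w):=\int_{(-\infty,0]}\frac{\mu(\dd z)}{1-wz},\qquad K(w):=wH(w)=G_{\mu}(1/w),
\]
and to let the two main results of this section carry the profile-theoretic content while the Nevanlinna--Pick machinery handles the complex-analytic part. Note that $\underline m<\overline m$ (a nonempty interval) is forced by the hypothesis, and it is equivalent to $\mu((-\infty,0))>0$; I use this nondegeneracy throughout.

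\emph{Necessity.} Assume $g$ is a profile of nonpositive-rooted polynomials. By Theorem~\ref{theo:exp_profile_implies_zeros} the limit $\mu$ exists and $g$ is $C^\infty$ and strictly concave, which is the regularity in (i); part~(b) of the same theorem identifies $\eee^{g'}$ on $(\underline m,\overline m)$ with the inverse of the bijection $t\mapsto tG_{\mu}(t):(0,\infty)\to(\underline m,\overline m)$, from which $g'(\underline m+)=+\infty$ and $g'(\overline m-)=-\infty$ are read off, completing (i). For (ii) observe that $H$ is analytic on $\{\Re w>0\}$ (since $\Re(1-wz)\ge 1$ there), satisfies $H(\bar w)=\overline{H(w)}$ and $H(1/t)=tG_{\mu}(t)$, so on $(0,\infty)$ it inverts $\eee^{g'}$. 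The crucial step is univalence of $H$ on $\{\Re w>0\}$, which I would establish by the argument principle: since $\Re H(\ii b)=\int_{(-\infty,0]}\frac{\mu(\dd z)}{1+b^2z^2}$ is strictly decreasing in $|b|$ and $\Im H(\ii b)\le 0$ for $b>0$, the image of the imaginary axis (closed up through $w=0\mapsto\overline m$ and $w=\infty\mapsto\underline m$) is a simple closed curve, so $H$ has degree one. Setting $\cD:=H(\{\Re w>0\})$ and $\eee^{g'}:=H^{-1}$ yields the biholomorphism onto $\{\Re w>0\}$; conjugation invariance of $\cD$ comes from $H(\bar w)=\overline{H(w)}$, and $\Im H(w)=\Im w\int\frac{z\,\mu(\dd z)}{|1-wz|^2}$ vanishes only on $\R$, giving $\cD\cap\R=(\underline m,\overline m)$. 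Finally the identity
\[
\frac{K(w_j)-\overline{K(w_k)}}{w_j-\overline{w_k}}=\int_{(-\infty,0]}\frac{\mu(\dd z)}{(1-w_jz)(1-\overline{w_k}z)},\qquad w_j=\eee^{g'(y_j)},
\]
shows that the entry of~\eqref{eq:nevanlinna_pick_matrix_proof_char_profiles} is exactly this divided difference, i.e. the Gram matrix of $z\mapsto(1-w_jz)^{-1}$ in $L^2(\mu)$, hence positive semi-definite.

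\emph{Sufficiency.} Conversely, assume (i) and (ii); put $\phi:=\eee^{g'}$, $H:=\phi^{-1}$ and $K(w):=wH(w)$ on $\{\Re w>0\}$. Reversing the Gram identity, condition (ii) says precisely that the Pick kernel $\frac{K(w)-\overline{K(w')}}{w-\overline{w'}}$ is positive semi-definite on the nonempty open set $\phi(\{y\in\cD:\Re y>0\})$; by the inverse Nevanlinna--Pick (Löwner) theorem $K$ coincides there with a Nevanlinna function, and, being analytic on the connected half-plane, is itself Nevanlinna on $\{\Re w>0\}\cap\mathbb{C}^+$. The Nevanlinna representation theorem, together with the facts that $K$ is real-symmetric and analytic across $(0,\infty)$, then produces a positive measure supported on $(-\infty,0]$ with $K(w)=\int_{(-\infty,0]}\frac{w\,\dd\mu(z)}{1-wz}$; the boundary values $H(0+)=\overline m$ and $H(+\infty)=\underline m$ forced by $g'(\overline m-)=-\infty$, $g'(\underline m+)=+\infty$ show that $\mu$ is a probability measure on $[-\infty,0]$ with $\mu(\{0\})=\underline m$, $\mu(\{-\infty\})=1-\overline m$, so that $\phi=(tG_{\mu})^{-1}$. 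Choosing nonpositive roots at the quantiles of the finite part of $\mu$, with a $\underline m$-fraction of roots at $0$ and degree deficiency $1-\overline m$ accounting for the mass at $-\infty$, gives polynomials $(P_n)$ with $\lsem P_n\rsem_n\toweak\mu$. Theorem~\ref{theo:zeros_imply_exp_profile} endows $P_n/P_n(1)$ with a profile $\tilde g$ satisfying $\eee^{\tilde g'}=(tG_{\mu})^{-1}=\eee^{g'}$, whence $\tilde g=g-M_g$; rescaling by $\eee^{M_g n}$ (which leaves the roots, hence nonpositivity, unchanged) produces a sequence whose profile is exactly $g$.

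\emph{Main obstacle.} The genuine difficulties are the two univalence/representation steps. In the necessity direction one must prove that $H$ (equivalently $G_{\mu}$) is univalent on the right half-plane — it is \emph{not} univalent on the full cut plane $\C\setminus(-\infty,0]$, where critical points may occur in $\{\Re w<0\}$ — which forces the boundary argument above and a careful treatment of the endpoints $0$ and $\infty$ and of properness. In the sufficiency direction the crux is the inverse Nevanlinna--Pick step: promoting positive-semidefiniteness assumed only on the sub-domain $\{\Re y_j>0\}$ to a bona fide Nevanlinna function on $\mathbb{C}^+$, and then pinning down that the representing measure lives exactly on $(-\infty,0]$ and is a probability measure with the prescribed atoms at $0$ and $-\infty$. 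This is the precise analogue of the hard direction of Bochner's theorem and of~\cite[Proposition~6.3]{bercovici_voiculescu}.
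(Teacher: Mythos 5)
Your proposal is correct in substance and shares the paper's global architecture (pass between the profile $g$ and the limit measure $\mu$ via Theorems~\ref{theo:exp_profile_implies_zeros} and~\ref{theo:zeros_imply_exp_profile}, then do complex analysis on the function $w\mapsto wH(w)=G_\mu(1/w)$), but the necessity direction is handled by genuinely different means. Where the paper establishes bijectivity of the extension of $\eee^{g'}$ by quoting Bercovici--Voiculescu's Proposition~6.2 on the univalence of the $\psi$-transform (your $H$ is exactly $\psi_{\mu_+}(-\cdot)+\overline m$, so the two statements are equivalent), you re-prove that univalence directly through a boundary-correspondence/argument-principle argument on the right half-plane; and where the paper deduces positive semi-definiteness of~\eqref{eq:nevanlinna_pick_matrix_proof_char_profiles} from the Nevanlinna--Pick theorem applied to $s\mapsto G_\mu(1/s)$, you obtain it from the explicit Gram identity $\frac{K(w_j)-\overline{K(w_k)}}{w_j-\overline{w_k}}=\int_{(-\infty,0]}\frac{\mu(\dd z)}{(1-w_jz)(1-\overline{w_k}z)}$. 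Your route is more elementary and uniform (the Gram computation needs no case analysis on where $\eee^{g'}(y_j)$ lands, whereas the paper's placement of $s_j$ in $\C^+$ only literally covers $y_j\in\C^+$), at the cost of having to supply the standard Darboux--Picard details (properness, continuity of $H$ at $0$ and $\infty$, simplicity of the boundary curve) that the citation spares the paper. Your sufficiency argument coincides with the paper's (Nevanlinna--Pick extension of $K$ to a Pick function, Herglotz/Akhiezer representation, boundary asymptotics $H(0+)=\overline m$, $H(+\infty)=\underline m$ to fix $\alpha_0=0$ and the atoms, then Theorem~\ref{theo:zeros_imply_exp_profile}); one point to make explicit there is that the translated PSD hypothesis lives on $\phi(\{y\in\cD:\Re y>0\})$, which also contains real and lower-half-plane nodes, so you must restrict to the (nonempty, open) part lying in $\C^+$ before invoking Nevanlinna--Pick and then use the identity theorem on $\{\Re w>0\}\cap\C^+$ -- you flag this but leave the resolution implicit. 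Finally, your closing normalization step (rescaling by $\eee^{M_g n}$ so the profile is $g$ rather than $g-M_g$) is a small but genuine refinement: the paper's last sentence glosses over exactly this additive constant.
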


The proof of Theorems~\ref{theo:exp_profile_implies_zeros} and~\ref{theo:zeros_imply_exp_profile} will be given in Section~\ref{sec:proof_main1}. The proof of Proposition~\ref{prop:char_profiles} will be given in Section~\ref{subsec:proof_char_profiles}.

\subsection{Related results in the literature}\label{sec:literature1}
Theorems~\ref{theo:exp_profile_implies_zeros} and~\ref{theo:zeros_imply_exp_profile} are not entirely new. \citet[Theorem~1]{van_assche_fano_ortolani} proved a version of Theorem~\ref{theo:zeros_imply_exp_profile}, including~\eqref{eq:zeros_imply_exp_profile_claim} and~\eqref{eq:P_s_n_k_quotients_polys}, under more restrictive assumptions which enforce $\underline m=0$ and $\overline m =1$. These authors~\cite[Section~3]{van_assche_fano_ortolani} also give several applications.  They compute exponential profiles of certain orthogonal polynomials  and of iterations $T(z), T(T(z)), T(T(T(z))), \ldots$ of a fixed polynomial $T$ assuming that the Julia set of $T$ is contained in $(-\infty,0]$ . All these  results can be found in~\citet[Section~5.1]{van_assche_book_asymptotics_ortho_polys}. Earlier works containing weaker results or special cases are~\cite{fano_poly_maps,fano_galavotti_dense_sums,fano_ortolani_van_assche_orthogonal}.  Motivated by zero distribution of certain biorthogonal polynomials, \citet[Theorem~3.1]{lubinski_sidi_composite_biorthogonal} proved an extension of~\cite[Theorem~1]{van_assche_fano_ortolani} which is very close to  Theorem~\ref{theo:zeros_imply_exp_profile} but requires $\lsem P_n\rsem$ to be concentrated on $[-A,0]$ for some constant $A$.
Lubinski and Stahl~\cite[Theorem~2]{lubinski_stahl_some_explicit} applied~\cite[Theorem~1]{van_assche_fano_ortolani} to determine the limiting zero distribution of certain biorthogonal polynomials. Although the papers listed above do not explicitly state a general result of the form ``profile implies zeros'', the idea is implicitly used to identify the limiting distribution of zeros for several specific families of polynomials.

In a very recent contribution, \citet{arizmendi2024s} proved that the existence of a limiting empirical zero distribution for a sequence of polynomials $(P_n)_{n\in \N}$ with nonpositive roots is, essentially, equivalent to the existence of the limit of the ratios of consecutive coefficients of $P_n$. More precisely, using our notation, their Theorem 1.1 states that $\lsem P_n\rsem \toweak \mu$ is equivalent to~\eqref{eq:P_s_n_k_quotients_polys}, under the assumptions that all roots of $P_n$ are nonpositive, $\deg P_n = n$, and $\overline{m} = 1$. Moreover, they relate the limiting function $\eee^{g'}$ in~\eqref{eq:P_s_n_k_quotients_polys} to the so-called $S$-transform of $\mu$. It is not difficult to verify that~\eqref{eq:P_s_n_k_quotients_polys} is equivalent to~\eqref{eq:zeros_imply_exp_profile_claim}. Therefore, Theorem 1.1 in~\cite{arizmendi2024s} implies Theorems~\ref{theo:exp_profile_implies_zeros} and~\ref{theo:zeros_imply_exp_profile} when $\overline{m} = 1$ and $\deg P_n = n$. Note that their proof is based on \cite[Theorem 1.2]{arizmendi2024s}, going back to \cite[\S 4]{HK21} which in turn relies upon the work \cite[Theorem~1]{van_assche_fano_ortolani} again. The methods used in the proof of Theorem 1.2 in~\cite{arizmendi2024s} are primarily analytical, relying on the analysis of free cumulants. In contrast, we present purely probabilistic proofs of Theorems~\ref{theo:exp_profile_implies_zeros} and~\ref{theo:zeros_imply_exp_profile}, relying on large deviation estimates for sums of independent Bernoulli random variables. This connection stems from the observation that $P_n(x)/P_n(1)$ serves as the generating function for such a sum. We shall further elaborate on this connection and provide additional references to the relevant probabilistic literature in Section~\ref{subsec:large_deviations}.

\citet{Eremenko} considers a sequence $f_n(z) = \sum_{k=0}^n a_{k:n} z^k$, $n\in \N$, of polynomials  and assumes that (i) $\frac 1n \log |f_n(z)| \to u(z)$ in $L^1_{\text{loc}}$ for some subharmonic function $u\nequiv -\infty$  and (ii) the sequence $\frac 1n \log |f_n(z)|$ is uniformly bounded above on every compact subset of $\C$. Then, $\lsem f_n \rsem_n \to (2\pi)^{-1} \Delta u$ as $n\to\infty$, where $\Delta$ is the distributional Laplacian.  Note that the polynomials $f_n$  are not required to be real-rooted.  Let $\psi_n(x)$ be the least convex minorant of the mapping $\frac k n \mapsto -\frac 1n  \log |a_{k:n}|$, more precisely
$$
\psi_n(x) := \sup_{\substack{\psi: [0,\infty) \to \R, \; \psi \text{ is convex}}} \left\{ \psi(x): \psi\left(\frac kn\right)  \leq - \frac 1n \log |a_{k:n}| \text{ for all } k\in \{0,\ldots, n\} \right\}, \qquad x\geq 0.
$$
\citet[Theorem~1]{Eremenko} proves that $\psi_n(x)$ converges to some limit $\varphi$ and characterizes it as $\varphi(x) = \sup_{t\in \R} (tx - \Phi(t))$, the Legendre transform of $\Phi(t) := \max_{|z| = \eee^t}|u(z)|$. Conversely, if $\psi_n \to \varphi$, then $\frac 1n \log |f_n(z)| \to u(z)$ holds along a subsequence.   If  the sequence $(|a_{k:n}|)_{k=0}^n$ is log-concave (for example, if all zeros of $f_n$ are nonpositive), then $\psi_n$ becomes the linear interpolation of the map $\frac k n \mapsto -\frac 1n  \log |a_{k:n}|$ and Eremenko's result gives a formula for the exponential profile of the coefficients.   \citet{BE15} characterize the set of possible weak limits of empirical zero distributions for polynomials with positive coefficients.

Several authors proved limit theorems for elementary symmetric polynomials of i.i.d.\ random variables. More precisely, let $X_1,X_2,\ldots$ be i.i.d.\ random variables and consider the polynomials $P_n(x):= \prod_{k=1}^{n} (x+X_i) =: \sum_{k=0}^n a_{k:n} x^k$. Their coefficients are the elementary symmetric polynomials
$$
a_{n-k:n}= \sum_{1\leq i_1< \ldots < i_{n-k} \leq n} X_{i_1}\ldots X_{i_{k}}, \qquad k\in \{0,\ldots, n\}.
$$
Assuming that $X_1\geq 0$ a.s., $\E \log (1+X_1)<\infty$ and $k,n\to\infty$ such that $k/n\to \alpha\in (0,1)$,  \citet{halasz_szekely} proved that $(a_{n-k:n}/\binom nk)^{1/k}$ converges a.s.\ and characterized the limit. This result follows from Theorem~\ref{theo:zeros_imply_exp_profile} since, with probability $1$, $\lsem P_n\rsem$ converges weakly to the distribution of $-X_1$. \citet{bochi_etal} called these limits (which depend on $\alpha$) H\'al\'asz--Sz\'ekely barycenters (of the distribution of $X_1$) and studied their properties.  Central limit theorems for $a_{n-k:n}$ in the regime when $k\sim \alpha n$, $\alpha \in [0,1]$ were proved by~\citet{szekely_limit_theorem}, \citet{mori_szekely}, \citet{van_es}, \citet{van_es_helmers}, \citet{major_symm_iid}. Using our methods for deriving Theorems~\ref{theo:exp_profile_implies_zeros} and~\ref{theo:zeros_imply_exp_profile} we shall complement these results with functional limit theorems in the upcoming work~\cite{jalowy_kabluchko_marynych_zeros_profiles_part_III}. Central limit theorems for random permanents were obtained by~\citet{rempala_wesolowski}.

The paper~\cite{KZ14} studied random polynomials of the form $R_n(z) = \sum_{k=0}^n a_{k:n} \xi_k z^k$, where  $\xi_0,\xi_1,\ldots$ are i.i.d.\ random variables with values in $\C$  and $a_{k:n}$ are complex numbers such that
$$
\sup_{k= 0,\ldots, n} \left|\frac{1}{n} \log |a_{k:n}| - g\left(\frac{k}{n}\right)\right| \to 0,\quad n\to\infty,
$$
where $g:[0,1]\to \R$ is a concave function,  the ``profile''.     Assuming that $\xi_0$ is not a.s.\ constant and $\E \log (1+|\xi_0|) <\infty$, it was shown in~\cite[Theorem~2.9]{KZ14} that the random probability measure $\lsem R_n\rsem_n$ converges weakly to $\nu$, a deterministic rotationally invariant probability measure on $\C$ characterized by the property $\nu (\{z\in \C: |z|\leq \eee^{-g'(\alpha-0)}\}) = \alpha$ for all $\alpha \in (0,1)$.  Since $\nu$ is rotationally invariant, its Cauchy transform $G_{\nu} (t) := \int_{\C}\frac{\nu({\dd}z)}{t-z}$ satisfies $t \, G_\nu(t) = \nu (\{z\in \C: |z| \leq  |t|\})$, for all $t\in \C\backslash \{0\}$ such that $\nu (\{z\in \C: |z|= |t|\}) = 0$. So, the functions $r\mapsto rG_\nu(r)$ and $\alpha\mapsto \eee^{-g'(\alpha)}$ are (generalized) inverses of each other. This is the same relation as in the real-rooted case studied in the present paper. The reason is that the non-rigorous argument sketched in Remark~\ref{rem:non_rig_argument_profile_zeros} applies to both settings. The techniques used to make the argument rigorous in both settings are, however, very different.

\section{Applications to convolutions of polynomials}\label{sec:convolutions}
\subsection{Hadamard product}
As a simple warm-up for the upcoming subsection, we begin by studying the \emph{Hadamard product of polynomials}. Recall that it is defined by taking component-wise products of coefficients, that is, for $P_n^{(1)}(x)=\sum_{k=0}^{n} a_{k:n}^{(1)} x^k$ and $P_n^{(2)}(x)=\sum_{k=0}^{n} a_{k:n}^{(2)} x^k$ with degrees at most $n$ we define
$$
\big[P_n^{(1)}\odot_n P_n^{(2)}\big](x)=\sum_{k=0}^{n} a_{k:n}^{(1)}a_{k:n}^{(2)} x^k.
$$
In what follows, we denote by $f^{\leftarrow}$ the inverse of a continuous strictly monotone function $f$ defined on some interval of $\R$.
\begin{theorem}\label{theo:hadamard-product}
For every $n\in\mathbb{N}$ let $P_n^{(1)}(x)$ and $P_n^{(2)}(x)$ be polynomials of degree at most $n\in\N$.  Let their roots be nonpositive and
$$
\lsem P_n^{(1)}\rsem_n \toweak \mu_1,
\qquad
\lsem P_n^{(2)}\rsem_n \toweak \mu_2,
$$
for some probability measures $\mu_1$ and $\mu_2$ on $[-\infty,0]$ with Cauchy transforms $G_{\mu_1}$ and $G_{\mu_2}$, respectively. Also suppose that the intervals $(\mu_1(\{0\}), 1 - \mu_1 (\{-\infty\}))$ and  $(\mu_2(\{0\}), 1 - \mu_2 (\{-\infty\}))$ have a nonempty intersection. Then, the limiting zero distribution
\begin{equation}\label{eq:hadamard_claim_weak_convergence}
\lsem P_n^{(1)} \odot_n P_n^{(2)}\rsem_n \toweak \mu
\end{equation}
exists as a probability measure on $[-\infty, 0]$ and is determined by its Cauchy transform $G_\mu$ by the rule: the inverse of $t\mapsto tG_{\mu}(t)$ is the product of the inverse of $t\mapsto tG_{\mu_1}(t)$ and the inverse of $t\mapsto tG_{\mu_2}(t)$.
\end{theorem}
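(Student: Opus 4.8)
The key structural fact is that the Hadamard product acts additively on the logarithms of the coefficients, so it should transform the two exponential profiles into their sum; the whole argument is then an assembly of Theorems~\ref{theo:zeros_imply_exp_profile} and~\ref{theo:exp_profile_implies_zeros}. Write $P_n^{(i)}(x)=\sum_{k=0}^n a_{k:n}^{(i)}x^k$ and set $\underline{m}_i:=\mu_i(\{0\})$, $\overline{m}_i:=1-\mu_i(\{-\infty\})$ for $i\in\{1,2\}$. Since the hypothesis guarantees that $(\underline{m}_1,\overline{m}_1)\cap(\underline{m}_2,\overline{m}_2)$ is nonempty, both intervals are themselves nonempty, so Theorem~\ref{theo:zeros_imply_exp_profile} applies to each sequence $(P_n^{(i)})_{n\in\N}$ and produces strictly concave, infinitely differentiable profiles $g_i\colon(\underline{m}_i,\overline{m}_i)\to(-\infty,0]$ of the normalized polynomials $P_n^{(i)}/P_n^{(i)}(1)$, together with the divergence~\eqref{eq:divergence_outside_polys} outside $(\underline{m}_i,\overline{m}_i)$.

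First I would verify that the normalized Hadamard product $Q_n:=\big(P_n^{(1)}\odot_n P_n^{(2)}\big)/\big(P_n^{(1)}(1)P_n^{(2)}(1)\big)$ possesses, in the sense of Definition~\ref{def:exp_profile}, the exponential profile $g:=g_1+g_2$ on $(\underline{m},\overline{m})$, where $\underline{m}:=\max(\underline{m}_1,\underline{m}_2)$ and $\overline{m}:=\min(\overline{m}_1,\overline{m}_2)$. This rests on the identity
\[
\frac1n\log\frac{a_{k:n}^{(1)}a_{k:n}^{(2)}}{P_n^{(1)}(1)P_n^{(2)}(1)}=\frac1n\log\frac{a_{k:n}^{(1)}}{P_n^{(1)}(1)}+\frac1n\log\frac{a_{k:n}^{(2)}}{P_n^{(2)}(1)},
\]
which shows that for $\alpha\in(\underline{m},\overline{m})$ both summands converge and the limit is $g_1(\alpha)+g_2(\alpha)$, while for $\alpha\notin[\underline{m},\overline{m}]$ at least one summand tends to $-\infty$ by~\eqref{eq:divergence_outside_polys} and the other stays bounded, forcing the sum to $-\infty$. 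Nonemptiness of $(\underline{m},\overline{m})$ is exactly the intersection hypothesis. To be allowed to invoke Theorem~\ref{theo:exp_profile_implies_zeros} I additionally need that $Q_n$ has only nonpositive zeros for large $n$: this is the classical fact that the Hadamard product of two polynomials with only real nonpositive zeros again has only real nonpositive zeros (equivalently, that the termwise product of two finite P\'olya frequency sequences is a P\'olya frequency sequence), and normalizing by the positive constant $P_n^{(1)}(1)P_n^{(2)}(1)$ affects neither the zeros nor this property.

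With both ingredients in hand, Theorem~\ref{theo:exp_profile_implies_zeros}(a) gives $\lsem Q_n\rsem_n\toweak\mu$ for some probability measure $\mu$ on $[-\infty,0]$; since $Q_n$ and $P_n^{(1)}\odot_n P_n^{(2)}$ share the same zeros, this is precisely~\eqref{eq:hadamard_claim_weak_convergence}. Theorem~\ref{theo:exp_profile_implies_zeros}(b) then identifies $G_\mu$ through the statement that $t\mapsto tG_\mu(t)$ is the inverse of $\alpha\mapsto\eee^{-g'(\alpha)}$. Because $g'=g_1'+g_2'$ on $(\underline{m},\overline{m})$, we have $\eee^{-g'(\alpha)}=\eee^{-g_1'(\alpha)}\,\eee^{-g_2'(\alpha)}$, and Theorem~\ref{theo:zeros_imply_exp_profile}(b) tells us that $\alpha\mapsto\eee^{-g_i'(\alpha)}$ is exactly the inverse of $t\mapsto tG_{\mu_i}(t)$. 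Hence the inverse of $t\mapsto tG_\mu(t)$ is the pointwise product of the inverses of $t\mapsto tG_{\mu_1}(t)$ and $t\mapsto tG_{\mu_2}(t)$, which is the asserted rule.

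I expect the single genuinely external input --- and therefore the main obstacle --- to be the preservation of nonpositive-rootedness under the Hadamard product; everything else is bookkeeping on top of the profile/zero-distribution equivalence. The one spot demanding a little care is the divergence of the combined profile on the correct set: on the part of $[0,1]$ that lies inside one interval $(\underline{m}_i,\overline{m}_i)$ but outside the other, one must argue that the \emph{bounded} (convergent) summand cannot compensate the summand diverging to $-\infty$, so that the sum indeed tends to $-\infty$ there, as Definition~\ref{def:exp_profile} requires.
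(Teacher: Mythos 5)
Your proposal is correct and follows essentially the same route as the paper: invoke Theorem~\ref{theo:zeros_imply_exp_profile} to get profiles $g_1,g_2$ of the normalized polynomials, observe that the Hadamard product (which preserves nonpositive-rootedness, a classical fact the paper cites as well) has profile $g_1+g_2$ on the intersection of the two intervals with divergence to $-\infty$ outside, and then apply Theorem~\ref{theo:exp_profile_implies_zeros}(a),(b) together with $\eee^{-g'}=\eee^{-g_1'}\eee^{-g_2'}$ to identify the limit. The only cosmetic remark is that in the divergence step the clean justification is that both normalized summands are bounded above by $0$ (since $a_{k:n}^{(i)}\leq P_n^{(i)}(1)$), so the sum tends to $-\infty$ whenever either summand does, regardless of whether the other converges.
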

Here is a quick proof based on a straightforward application of Theorems~\ref{theo:exp_profile_implies_zeros} and~\ref{theo:zeros_imply_exp_profile}.
\begin{proof}
It is well known that the Hadamard product preserves real-rootedness, see, for example,~\cite{braenden_on_lin_transfor_preserving}. According to Theorem~\ref{theo:zeros_imply_exp_profile}(a), the normalized polynomials $P_n^{(1)}(x)/P_n^{(1)}(1)$ and $P_n^{(2)}(x)/P_n^{(2)}(1)$ have profiles $g_1$ and $g_2$ defined on
$(\mu_1(\{0\}), 1 - \mu_1 (\{-\infty\}))$ and  $(\mu_2(\{0\}), 1 - \mu_2 (\{-\infty\}))$, respectively. Thus, according to~\eqref{eq:zeros_imply_exp_profile_claim} and~\eqref{eq:divergence_outside_polys}, the polynomial $[P_n^{(1)}\odot P_n^{(2)}](x)/(P_n^{(1)}(1)P_n^{(2)}(1))$ has the profile $g_1+g_2$ defined on the intersection of $(\mu_1(\{0\}), 1 - \mu_1 (\{-\infty\}))$ and $(\mu_2(\{0\}), 1 - \mu_2 (\{-\infty\}))$ (and is equal to $-\infty$ on interior of the complement). According to Theorem~\ref{theo:exp_profile_implies_zeros}(a) this implies~\eqref{eq:hadamard_claim_weak_convergence} and the Cauchy transform of the limit $\mu$ is recovered by
Theorem~\ref{theo:exp_profile_implies_zeros}(b). Observe that the inverses of $t\mapsto tG_{\mu_1}(t)$ and $t\mapsto tG_{\mu_2}(t)$ are both defined on the intersection of $(\mu_1(\{0\}), 1 - \mu_1 (\{-\infty\}))$ and $(\mu_2(\{0\}), 1 - \mu_2 (\{-\infty\}))$ and can be multiplied there. The proof is complete.
\end{proof}
\begin{remark}
The intervals $(\mu_1(\{0\}), 1 - \mu_1 (\{-\infty\}))$ and $(\mu_2(\{0\}), 1 - \mu_2 (\{-\infty\}))$ have a nonempty intersection if and only if
$$
\max\{\mu_1(\{0\}),\mu_2(\{0\})\}+\max\{\mu_1(\{-\infty\}),\mu_2(\{-\infty\})\}<1.
$$
If this intersection is empty, then $P_n^{(1)} \odot_n P_n^{(2)}$ can be zero everywhere and $\lsem P_n^{(1)} \odot_n P_n^{(2)}\rsem_n$ can be undefined.
\end{remark}

\subsection{Finite free convolutions of polynomials}
We shall now proceed to more sophisticated convolutions on polynomials coming from finite free probability.

\begin{definition}[Finite free convolutions]\label{def:finite_free_convolution}
Consider two polynomials of degree at most $n$  
$$
P_n^{(1)}(x) = \sum_{k=0}^{n} a_{k:n}^{(1)} x^k,
\qquad
P_n^{(2)}(x) = \sum_{k=0}^{n} a_{k:n}^{(2)} x^k.
$$
The finite free additive convolution and the finite free multiplicative convolution of these polynomials are defined by
\begin{align*}
\sum_{k=0}^n a_{k:n}^{(1)} x^k \boxplus_n \sum_{k=0}^n a_{k:n}^{(2)} x^k &= \frac 1 {n!} \sum_{k=0}^n \frac{x^k}{k!} \sum_{j_1+j_2 = n+k} a_{j_1:n}^{(1)} a_{j_2:n}^{(2)} j_1! j_2!,
\\
\sum_{k=0}^n a_{k:n}^{(1)} x^k \boxtimes_n \sum_{k=0}^n a_{k:n}^{(2)} x^k &=  \sum_{k=0}^n (-1)^{n-k} \frac{a_{k:n}^{(1)} a_{k:n}^{(2)}}{\binom n k} x^k.
\end{align*}
\end{definition}

Both convolutions first appeared more than a century ago. The finite free additive convolution is due to Walsh~\cite{walsh} and the finite free multiplicative convolution due to Szeg\"o~\cite{szegoe_bemerkungen_grace}. Initiated by the rediscovery of the convolutions by Marcus, Spielman and Srivastava \cite{marcus_spielman_srivastava} (see also~\cite{marcus2021polynomial}), the connection to free probability now became an active field of research, see for example~\cite{arizmendiperales,arizmendi,arizmendi2024s,CampbellAppell,COR24,HK21,kabluchko_rep_diff_free_poi,kabluchko2024leeyangzeroescurieweissferromagnet,martinez,mirabelli2021hermitian}.
The next theorem is due to  Walsh~\cite[Theorem~VI and p.~180]{walsh} and Szeg\"o~\cite[Satz~4']{szegoe_bemerkungen_grace}; see also Theorems~1.3 and~1.6 in~\cite{marcus_spielman_srivastava}.

\begin{theorem}[Convolutions preserve real-rootedness]\label{thm:real_rootedness}
The following claims hold true:
\begin{itemize}
\item[(i)] If $P_n^{(1)}(x)$ and $P_n^{(2)}(x)$ are real-rooted polynomials of degree $\leq n$, such that $\deg P_{n}^{(1)} + \deg P_n^{(2)}\geq n$, then  $P_n^{(1)} \boxplus_n P_{n}^{(2)}$ is also real-rooted\footnote{If $\deg P_{n}^{(1)} + \deg P_n^{(2)} < n$, then $P_n^{(1)} \boxplus_n P_{n}^{(2)}\equiv 0$.}.
\item[(ii)] Let  $Q_n^{(1)}(x)$ and $Q_n^{(2)}(x)$ be non-constant polynomials, both of degree $\leq n$, that have only nonnegative roots. Assume also that none of these polynomials is divisible by $x^{\min (\deg Q_n^{(1)},\deg Q_n^{(2)})}$. Then, all roots of  $Q_n^{(1)}\boxtimes_n Q_n^{(2)}$ are nonnegative.
\end{itemize}
\end{theorem}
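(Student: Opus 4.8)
The plan is to treat both parts in a unified classical way, via the \emph{Grace--Walsh--Szeg\"{o} coincidence theorem} (GWS): if $\Phi(w_1,\dots,w_n)$ is symmetric and of degree at most $1$ in each variable, and $w_1,\dots,w_n$ all lie in a circular region $C$, then there is a single $w_*\in C$ with $\Phi(w_*,\dots,w_*)=\Phi(w_1,\dots,w_n)$. The common idea is to freeze an arbitrary zero $\gamma$ of the convolution, read the equation ``$\gamma$ is a zero'' as the vanishing of a polynomial that is symmetric and multiaffine in the roots of one of the two factors, apply GWS to collapse those roots to a single point $w_*$ in a conveniently chosen circular region, and observe that the collapsed identity says precisely that a translate (additive case) or dilate (multiplicative case) of $\gamma$ is a root of the \emph{other} factor. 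Choosing the region to be a half-plane then forces $\gamma\in\R$ (and, in the multiplicative case, $\gamma\ge 0$).

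I would do the multiplicative convolution first. Writing $P_n^{(1)}=\prod_i(x-\lambda_i)$, $P_n^{(2)}=\prod_j(x-\mu_j)$, a direct check from Definition~\ref{def:finite_free_convolution} shows that the normalized elementary symmetric functions multiply, i.e.
$$
Q_n^{(1)}\boxtimes_n Q_n^{(2)}(x)=\sum_{m=0}^n(-1)^m\,\frac{e_m(\lambda)\,e_m(\mu)}{\binom nm}\,x^{\,n-m}.
$$
Fixing a zero $\gamma$, the vanishing of this expression is $\Psi(\lambda_1,\dots,\lambda_n)=0$, where $\Psi(\lambda):=\sum_m(-1)^m\binom nm^{-1}e_m(\mu)\,e_m(\lambda)\,\gamma^{\,n-m}$ is symmetric and multiaffine in the $\lambda_j$. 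Applying GWS with the $\lambda_j$ in a circular region $C$ and using $e_m(\lambda_*,\dots,\lambda_*)=\binom nm\lambda_*^m$ turns the collapsed identity into $\lambda_*^{\,n}\,Q_n^{(2)}(\gamma/\lambda_*)=0$, so $\gamma=\lambda_*\mu_*$ for some root $\mu_*\ge 0$ of $Q_n^{(2)}$. Now take $C=\{\Im z\le 0\}$: if $\Im\gamma>0$ then $\lambda_*=\gamma/\mu_*$ (with $\mu_*>0$) has $\Im\lambda_*>0$, contradicting $\lambda_*\in C$; the symmetric choice $C=\{\Im z\ge 0\}$ forces $\Im\gamma\ge 0$, hence $\gamma\in\R$. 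Finally $C=\{\Re z\ge 0\}$ gives $\gamma=\lambda_*\mu_*$ with $\Re\lambda_*\ge0$, $\mu_*\ge0$, whence $\gamma\ge 0$.

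For the additive convolution I would first record the differential-operator form, obtained by a short computation from Definition~\ref{def:finite_free_convolution} and Taylor's theorem,
$$
P_n^{(1)}\boxplus_n P_n^{(2)}(x)=\frac{1}{n!}\sum_{i=0}^n (P_n^{(1)})^{(n-i)}(0)\,(P_n^{(2)})^{(i)}(x).
$$
With $P_n^{(1)}=\prod_i(x-\lambda_i)$ one has $(P_n^{(1)})^{(n-i)}(0)=(-1)^i e_i(\lambda)(n-i)!$, so for a fixed zero $\gamma$ the relation becomes $\Phi(\lambda)=0$ with $\Phi(\lambda):=\sum_i(-1)^i e_i(\lambda)(n-i)!\,(P_n^{(2)})^{(i)}(\gamma)$, again symmetric and multiaffine. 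GWS collapses the roots to $\lambda_*$, and since $e_i(\lambda_*,\dots,\lambda_*)=\binom ni\lambda_*^i$, Taylor's theorem gives $\Phi(\lambda_*,\dots,\lambda_*)=n!\,P_n^{(2)}(\gamma-\lambda_*)$. Thus $\gamma=\lambda_*+\mu_*$ for a root $\mu_*$ of $P_n^{(2)}$. Choosing $C=\{\Im z\le 0\}$: if $\Im\gamma>0$, then $\mu_*=\gamma-\lambda_*$ real forces $\Im\lambda_*=\Im\gamma>0$, contradicting $\lambda_*\in C$; the mirror choice yields $\Im\gamma\ge0$, so $\gamma\in\R$.

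The main obstacle is not the collapsing mechanism but the degenerate bookkeeping, which is exactly where the hypotheses enter. GWS as stated needs a \emph{convex} circular region (half-planes and $\{\Re z\ge0\}$ qualify, so this is fine) and the multiaffine form to be not identically zero; I would have to verify that $\Phi,\Psi\not\equiv0$ and that the collapsed point $\lambda_*$ is neither $0$ nor $\infty$ in the divisions above. More seriously, the clean formulas assume $\deg P_n^{(i)}=n$ (all roots finite), whereas the theorem allows lower degree, i.e.\ roots at infinity; the degree condition $\deg P_n^{(1)}+\deg P_n^{(2)}\ge n$ in (i) and the non-constancy / non-divisibility-by-$x^{\min\deg}$ conditions in (ii) are precisely what guarantee that the convolution is not identically zero and has the expected degree. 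I expect to handle these by a perturbation/continuity argument: approximate each factor by a degree-$n$ polynomial with simple roots, apply the clean GWS argument, and pass to the limit, using that real-rootedness (resp.\ nonnegative-rootedness) is closed under coefficient convergence and that zeros vary continuously; alternatively one homogenizes and tracks the roots escaping to infinity directly. Making this limiting step rigorous, together with the nonvanishing and non-degeneracy checks, is the part I anticipate being fiddly, while the structural core of the proof is immediate from GWS.
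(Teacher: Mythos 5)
Your argument is correct, but it is not the route the paper takes: the paper does not reprove the classical core at all. It cites Walsh and Szeg\H{o} (equivalently, Theorems~1.3 and~1.6 of Marcus--Spielman--Srivastava) for polynomials of degree exactly $n$, and then only supplies the extension to degree $\leq n$: for part~(i) via the exact algebraic identity
$$
P_n^{(1)} \boxplus_n P_{n}^{(2)} =  \frac{\left((\dd /\dd x)^{n-d_2} P_n^{(1)}\right) \boxplus_{d_1+d_2-n}  \left((\dd /\dd x)^{n-d_1} P_n^{(2)}\right)}{n(n-1) \cdots (d_1+d_2-n+1)}
$$
(MSS Lemma~1.16 applied inductively) together with Rolle's theorem, and for part~(ii) by citing Rahman--Schmeisser, Corollary~5.5.8, or MSS Lemma~4.9. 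What you do instead is reconstruct the classical core itself: your Grace--Walsh--Szeg\H{o} collapse (freeze a zero $\gamma$, view its vanishing as a symmetric multiaffine form in the roots of one factor, collapse to $\lambda_*$ in a closed half-plane, and read off $\gamma=\lambda_*+\mu_*$ resp.\ $\gamma=\lambda_*\mu_*$) is essentially Walsh's and Szeg\H{o}'s original method, and your computations (the coefficient formula for $\boxtimes_n$, the identity $\Phi(\lambda_*,\dots,\lambda_*)=n!\,P_n^{(2)}(\gamma-\lambda_*)$ via Taylor, the half-plane and right half-plane choices) all check out, including the observation that $\lambda_*=0$ would force $\gamma=0$ so the divisions are legitimate. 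For the sub-degree cases you substitute a perturbation argument (approximate by degree-$n$ factors such as $P^{(1)}(x)(1+\eps x)^{n-d_1}$, or $(1-\eps x)^{n-d_1}$ in the nonnegative-rooted case, then use Hurwitz and the closedness of real-/nonnegative-rootedness under coefficient convergence of nonzero limits) where the paper uses the exact derivative identity and a citation. Both work; the trade-off is that your proof is self-contained and treats the two convolutions uniformly, while the paper's reduction for (i) is limit-free and immediately yields the degree formula $d_1+d_2-n$, and its citations for (ii) absorb the delicate divisibility hypotheses that in your approach must be verified by hand to ensure the limit polynomial is not identically zero (which, as you correctly note, is exactly what the stated hypotheses guarantee: the coefficient supports $\{z_i,\dots,d_i\}$ of nonnegative-rooted polynomials are intervals, and the non-divisibility condition forces them to overlap).
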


\citet[Theorems~1.3 and~1.6]{marcus_spielman_srivastava} state both parts for polynomials of degree exactly $n$, but they also explain how to extend both claims to polynomials of degree $\leq n$. For Part~(i),  if $d_1 := \deg P_n^{(1)}\leq n$ and $d_2 := \deg P_n^{(2)}\leq n$ satisfy $d_1+d_2\geq n$ (which guarantees that $P_n^{(1)} \boxplus_n P_{n}^{(2)}$ is not identically $0$ and has degree $d_1+d_2 - n$),  Lemma~1.16 in~\cite{marcus_spielman_srivastava} applied inductively gives
$$
P_n^{(1)}(x) \boxplus_n P_{n}^{(2)}(x) =  \frac{\left((\dd /\dd x)^{n-d_2} P_n^{(1)}(x)\right) \boxplus_{d_1+d_2-n}  \left((\dd /\dd x)^{n-d_1} P_n^{(2)}(x)\right)}{n(n-1) \ldots (d_1+d_2-n+1)}.
$$
Both polynomials on the right-hand side are real-rooted by Rolle's theorem and both have the same ``correct'' degree $d_1+d_2 - n$. Part~(ii), as stated above, follows from~\cite[Corollary~5.5.8]{rahman_schmeisser_book_polys} (taking $k=\ell = -1$ there). Alternatively, one can argue using Lemma~4.9 in~\cite{marcus_spielman_srivastava}.

\subsection{Convergence of \texorpdfstring{$\boxplus_n$}{boxplus\textunderscore n} to \texorpdfstring{$\boxplus$}{boxplus}}
It has been shown in~\cite[Corollary 5.5]{arizmendiperales} and~\cite[Theorem 4.3]{marcus2021polynomial}, that, in a suitable sense, the finite free additive convolution $\boxplus_n$ approaches the free additive convolution $\boxplus$,  as $n\to\infty$. By definition, if $\mu_1$ and $\mu_2$ are compactly supported measures on $\R$, then their free additive convolution $\mu_1\boxplus\mu_2$ is the distribution of the sum of two freely independent random variables with distributions $\mu_1,\mu_2$ respectively. The additive free convolution $\boxplus$ has been introduced in~\cite{voiculescu_addition}, \cite[\S 3.2]{voiculescu_nica_dykema_book} for compactly supported measures and extended in~\cite[\S5]{bercovici_voiculescu},  \cite{maassen_free} to probability measures on $\R$; see also~\cite[Chapter~3]{mingo_speicher_book}.
General references on  free probability are the books~\cite{mingo_speicher_book,nica_speicher_book,voiculescu_nica_dykema_book}.

The fact that $\boxplus_n$ converges to $\boxplus$ is deeper than it may look at a first sight. To prove it, Arizmendi and Perales~\cite{arizmendiperales} developed an elegant combinatorial  theory of finite free cumulants. The theory of zeros and profiles presented in the previous section gives rise to an entirely different proof of the aforementioned result while at the same time increasing generality by allowing for non-compactly supported distributions and for polynomials of degree $\leq n$ (rather than $=n$). Another advantage of our approach is that it applies, with minimal changes, to other types of polynomial convolutions that preserve real-rootedness, such as the free multiplicative convolution (see Theorem~\ref{theo:finite_free_mult_conv}) or the Hadamard product (see Theorem~\ref{theo:hadamard-product}).

\begin{theorem}[$\boxplus_n$ converges to $\boxplus$]\label{theo:finite_free_additive_conv_free_additive}
For every $n\in \N$ let $P_n^{(1)}(x)$ and $P_n^{(2)}(x)$ be polynomials of degree at most $n$.   Suppose that
\begin{itemize}
\item[(i)] all roots of these polynomials are real and upper bounded by $A$, where $A\in \R$ does not depend on $n$;
\item[(ii)] for some probability measures $\mu_1$ and $\mu_2$ on $[-\infty, A]$ we have
$$
\lsem P_n^{(1)}\rsem_n \toweak \mu_1,
\qquad
\lsem P_n^{(2)}\rsem_n \toweak \mu_2,
\qquad
\text{weakly on $[-\infty, A]$.}
$$
\item[(iii)]  $\mu_1(\{-\infty\}) + \mu_2(\{-\infty\}) < 1$.
\end{itemize}
Then, as $n\to\infty$, the probability measure $\lsem P_n^{(1)} \boxplus_n P_n^{(2)}\rsem_n$ converges weakly on $[-\infty, 2A]$ to some probability measure $\mu$ on $[-\infty, 2A]$.  If $\mu_1(\{-\infty\}) = \mu_2(\{-\infty\}) = 0$, then $\mu = \mu_1 \boxplus \mu_2$ is the classical free additive convolution of probability measures $\mu_1$ and $\mu_2$.
\end{theorem}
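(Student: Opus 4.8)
The plan is to combine the two profile theorems (Theorems~\ref{theo:exp_profile_implies_zeros} and~\ref{theo:zeros_imply_exp_profile}) with a Laplace‐type evaluation of the explicit coefficient formula for $\boxplus_n$ from Definition~\ref{def:finite_free_convolution}, and then to recognize the resulting operation on profiles as the free additive convolution via Legendre duality.

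\emph{Step 1 (reduction to nonpositive roots).} The finite free additive convolution is translation covariant: since $P\boxplus_n (x-c)^n = P(x-c)$, replacing $P_n^{(i)}(x)$ by $\tilde P_n^{(i)}(x):=P_n^{(i)}(x+A)$ makes all roots nonpositive and transforms $P_n^{(1)}\boxplus_n P_n^{(2)}$ into $\tilde P_n^{(1)}\boxplus_n \tilde P_n^{(2)}$, whose roots are those of the former shifted down by $2A$; correspondingly $\mu_i$ is replaced by its pushforward $\tilde\mu_i$ under $z\mapsto z-A$, a measure on $[-\infty,0]$. As $\boxplus$ is likewise translation covariant, it suffices to treat $A=0$. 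I also record the majorization bound that the largest root of $P\boxplus_n Q$ is at most the sum of the largest roots of $P$ and $Q$; this keeps $\tilde P_n^{(1)}\boxplus_n\tilde P_n^{(2)}$ nonpositive-rooted (and produces the bound $2A$ in the unshifted statement). Writing $\underline m_i=\tilde\mu_i(\{0\})$ and $\overline m_i=1-\mu_i(\{-\infty\})$, condition~(iii) reads $\overline m_1+\overline m_2>1$, which forces $\deg P_n^{(1)}+\deg P_n^{(2)}\ge n$ for large $n$, so by Theorem~\ref{thm:real_rootedness}(i) the convolution is a genuine nonzero polynomial of degree $\sim(\overline m_1+\overline m_2-1)n$.

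\emph{Step 2 (profile of the convolution).} By Theorem~\ref{theo:zeros_imply_exp_profile}(a) the normalized polynomials $\tilde P_n^{(i)}/\tilde P_n^{(i)}(1)$ have profiles $g_i$ on $(\underline m_i,\overline m_i)$ with the uniform estimates \eqref{eq:zeros_imply_exp_profile_claim}--\eqref{eq:divergence_outside_polys}. Expressing the coefficients of the normalized convolution through Definition~\ref{def:finite_free_convolution} and inserting Stirling's formula $\frac1n\log(\beta n)! = \beta\log\beta+\beta\log n-\beta+o(1)$, a Laplace evaluation of the inner sum $\sum_{j_1+j_2=n+k}$ produces, after the $\log n$ and the linear terms cancel,
\[
h(\alpha):=g(\alpha)+\alpha\log\alpha=\sup_{\beta_1+\beta_2=1+\alpha}\big(h_1(\beta_1)+h_2(\beta_2)\big),\qquad h_i:=g_i+\beta\log\beta ,
\]
a shifted sup-convolution of the modified profiles. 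The uniform control \eqref{eq:zeros_imply_exp_profile_claim} and the divergence \eqref{eq:divergence_outside_polys} are precisely what let me replace the sum by its maximal term and localize the optimizer inside $(\underline m_i,\overline m_i)$; the natural domain of $g$ is $\alpha\in(\underline m_1+\underline m_2-1,\ \overline m_1+\overline m_2-1)\cap[0,1]$, which is nonempty exactly by~(iii). Since $h_1,h_2$ are strictly concave (Theorem~\ref{theo:exp_profile_implies_zeros}(d)), so is $h$, and hence $g$ is a legitimate exponential profile.

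\emph{Step 3 (convergence and identification with $\boxplus$).} Theorem~\ref{theo:exp_profile_implies_zeros}(a),(b), applied to the nonpositive-rooted convolution, yields $\lsem\tilde P_n^{(1)}\boxplus_n\tilde P_n^{(2)}\rsem_n\toweak\mu$ with $t\mapsto tG_\mu(t)$ the inverse of $\alpha\mapsto\eee^{-g'(\alpha)}$; undoing the shift gives weak convergence on $[-\infty,2A]$. To identify $\mu$ when $\mu_1(\{-\infty\})=\mu_2(\{-\infty\})=0$, I differentiate the defining relation: setting $\alpha=tG_{\mu_i}(t)$ gives $g_i'(\alpha)=-\log t$, hence $h_i'(\alpha)=1+\log G_{\mu_i}(t)$, so with $w=G_{\mu_i}(t)$ and $K_{\mu_i}:=G_{\mu_i}^{\leftarrow}$ one obtains $(h_i')^{-1}(1+\log w)=w\,K_{\mu_i}(w)$, and likewise for $\mu$. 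The sup-convolution furnishes the Lagrange identity $(h')^{-1}(s)=(h_1')^{-1}(s)+(h_2')^{-1}(s)-1$ (equal slopes at the optimum, shifted argument), which upon substituting $s=1+\log w$ collapses to
\[
K_\mu(w)=K_{\mu_1}(w)+K_{\mu_2}(w)-\frac1w ,
\]
the defining functional equation of the free additive convolution. Thus $\mu=\mu_1\boxplus\mu_2$, and undoing the shift gives the claim.

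\emph{Main obstacle.} The routine-but-delicate part is Step~2: making the Laplace evaluation of $\sum_{j_1+j_2=n+k}$ uniform in $k$ over compact subintervals while simultaneously proving divergence to $-\infty$ outside the effective range, using only the one-sided and uniform estimates \eqref{eq:zeros_imply_exp_profile_claim}--\eqref{eq:divergence_outside_polys} rather than pointwise profiles. The genuinely conceptual difficulty, however, is the identification in Step~3 for non-compactly supported $\mu_1,\mu_2$: the relation $K_\mu=K_{\mu_1}+K_{\mu_2}-1/w$ must be placed inside the Bercovici--Voiculescu framework, and one must argue that agreement of the inverse Cauchy transforms on the real half-line $(0,\infty)$—where all measures are supported on $(-\infty,0]$ and the transforms are real-analytic and injective—propagates by analytic continuation and Stieltjes--Perron inversion to the conclusion $\mu=\mu_1\boxplus\mu_2$.
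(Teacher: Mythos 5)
Your proposal follows essentially the same route as the paper's own proof: shift to nonpositive roots, compute the profile of $P_n^{(1)}\boxplus_n P_n^{(2)}$ as a shifted sup-convolution of the modified profiles $\tilde g_i(\alpha)=g_i(\alpha)+\alpha\log\alpha$ (the paper's Lemma~\ref{lem:exp_profile_finite_free_additive_conv}), pass to the identity $(\tilde g')^{\leftarrow}(s)+1=(\tilde g_1')^{\leftarrow}(s)+(\tilde g_2')^{\leftarrow}(s)$ (the paper's Lemma~\ref{lem:inverse_modified_profiles}, which you obtain by the envelope/Lagrange condition instead of Rockafellar's Legendre-transform theorem), and convert it into additivity of the free cumulant transform; your relation $K_\mu=K_{\mu_1}+K_{\mu_2}-\frac1w$ is exactly the paper's $R_\mu=R_{\mu_1}+R_{\mu_2}$ in the normalization $R_\mu(s)=sG_\mu^{\leftarrow}(s)-1$ of~\eqref{eq:summary_R}, and your closing analytic-continuation remark matches the paper's treatment around Proposition~\ref{prop:free_additive_convol_existence}.

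There are, however, two concrete gaps. First, your reduction shifts by exactly $A$ (``it suffices to treat $A=0$''), whereas the paper shifts by $c>A$ so that all roots are \emph{strictly} negative, and this strictness is used. After your shift, $\tilde\mu_i$ may charge $\{0\}$, and in the admissible degenerate case where $\mu_i$ is supported on $\{A\}\cup\{-\infty\}$ — e.g.\ $P_n^{(1)}(x)=(x-A)^n$ — one has $\underline m_1=\overline m_1$, the profile interval is empty, and Theorem~\ref{theo:zeros_imply_exp_profile} yields no profile at all, so your Steps~2--3 cannot even start. Shifting strictly past $A$ forces $\underline m_i=0<\overline m_i$ (the latter by hypothesis~(iii)) and, via~\eqref{eq:G_0_plus_as_limit}, $\tilde g_i'(0+)=1+\log G_{\mu_i}(0+)<\infty$, which is what makes the optimizer localization and the common domain of the inverses $(\tilde g_i')^{\leftarrow}$ unproblematic; the fix is one line, but as written the reduction is incomplete. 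Second, in Step~2 the estimates~\eqref{eq:zeros_imply_exp_profile_claim}--\eqref{eq:divergence_outside_polys} are \emph{not} by themselves enough for the upper bound in the Laplace evaluation: they give no control on the summands when $j_i/n$ lies in the $\eps$-neighborhoods of $\overline m_i$, so a priori the maximal term could hide there. The paper bridges these uncontrolled windows using log-concavity of $j_1\mapsto a^{(1)}_{j_1:n}a^{(2)}_{n+k-j_1:n}\,\frac{j_1!(n+k-j_1)!}{k!\,n!}$ (Newton's inequality) together with hypo-convergence of concave functions; some such convexity input is a genuinely missing ingredient in your sketch, not merely a routine uniformity check. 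A minor remark: the external majorization bound on the largest root of $P\boxplus_n Q$ is unnecessary — after the shift the convolution has nonnegative coefficients and is real-rooted by Theorem~\ref{thm:real_rootedness}(i), hence nonpositive-rooted, which is how the paper argues and how the bound $2A$ arises.
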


We shall recall an analytic definition of $\boxplus$ in Sections~\ref{subsec:R_S_transf} and~\ref{subsec:free_conv_defs} and prove Theorem~\ref{theo:finite_free_additive_conv_free_additive} in Section~\ref{subsec:proof_additive_conv}.

\begin{remark}
The additive finite free convolution is closely related to the classical convolution of functions, defined by the formula
\begin{equation}\label{eq:usual_conv}
[P_n^{(1)} * P_n^{(2)}](z) = \int_{0}^z P_n^{(1)}(u) P_n^{(2)}(z-u) \dd u.
\end{equation}
Namely, we claim that
\begin{equation}\label{eq:finite_free_conv_vs_usual_conv}
\frac 1 {n!} \left(\frac{\dd}{\dd z}\right)^{n+1} [P_n^{(1)} * P_n^{(2)}](z) = P_n^{(1)}(z) \boxplus_n P_n^{(2)}(z).
\end{equation}
By bilinearity of $*$ and $\boxplus_n$, it suffices to verify this identity for the polynomials $z^j$ and $z^k$ with $j,k\in \{0,\ldots, n\}$, which is straightforward.
\end{remark}

\subsection{Convergence of \texorpdfstring{$\boxtimes_n$}{boxtimes\textunderscore n} to \texorpdfstring{$\boxtimes$}{boxtimes}}
For every two probability measures $\nu_1$ and $\nu_2$ on $[0,\infty)$ it is possible to define their free multiplicative convolution $\nu_1\boxtimes \nu_2$, which is also a probability measure on $[0,\infty)$; see~\cite{voiculescu_multiplication} and~\cite[\S3.6]{voiculescu_nica_dykema_book} for the compactly supported case, \cite[Section~6]{bercovici_voiculescu} for the general case and~\cite{bercovici_voiculescu_levy_hincin,chistyakov_goetze_arithmetic,arizmendi_hasebe_kitagawa_free_mult_conv} for further properties. It is known that $\nu_1\boxtimes \nu_2 = \nu_2\boxtimes \nu_1$ and $\delta_0 \boxtimes \nu_2 = \delta_0$, where $\delta_0$ is the Dirac measure at $0$.
In a suitable sense, the finite free multiplicative convolution $\boxtimes_n$ converges, as $n\to\infty$,  to the free multiplicative convolution $\boxtimes$. This has been shown in terms of the so-called $S$-transform by~\citet[Theorem 4.8]{marcus2021polynomial} and in terms of weak convergence by~\citet[Theorem 1.4]{arizmendi}, assuming compactly supported distributions.  Using our results on exponential profiles we shall show that this assumption can be removed and beyond, the result holds for polynomials of degree $\leq n$ (rather than $=n$).

\begin{theorem}[$\boxtimes_n$ converges to $\boxtimes$]\label{theo:finite_free_mult_conv}
For every $n\in \N$ let $Q_n^{(1)}(x)$ and $Q_n^{(2)}(x)$ be polynomials of degree at most $n$, with nonnegative roots. Suppose that
\begin{equation}\label{eq:theo:finite_free_mult_assumpt_with_infinity}
\lsem Q_n^{(1)}\rsem_n \toweak \nu_1,
\qquad
\lsem Q_n^{(2)}\rsem_n \toweak \nu_2,
\qquad
\text{ weakly on } [0,+\infty],
\end{equation}
where $\nu_1$ and $\nu_2$ are probability measures on $[0, +\infty]$. Also suppose that the intervals $(\nu_1(\{0\}), 1 - \nu_1 (\{+\infty\}))$ and  $(\nu_2(\{0\}), 1 - \nu_2 (\{+\infty\}))$ have a nonempty intersection. Then,
\begin{equation}\label{eq:theo:finite_free_mult_statement}
\lsem Q_n^{(1)} \boxtimes_n Q_n^{(2)}\rsem_n \toweak \nu \text{ weakly on } [0,+\infty],
\end{equation}
for some probability measure $\nu$ on $[0,+\infty]$. If $\nu_1(\{+\infty\})= \nu_2(\{+\infty\}) = 0$, then $\nu = \nu_1 \boxtimes \nu_2$ is the classical free multiplicative convolution of $\nu_1$ and $\nu_2$.
\end{theorem}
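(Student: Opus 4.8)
\emph{Strategy.} The plan is to transport the whole problem into the nonpositive-rooted world governed by Theorems~\ref{theo:exp_profile_implies_zeros} and~\ref{theo:zeros_imply_exp_profile} via the reflection $Q(x)\mapsto (-1)^{\deg Q}Q(-x)$. If $Q_n^{(i)}(x)=\sum_{k=0}^n a_{k:n}^{(i)}x^k$ has nonnegative roots, then its coefficients have sign $(-1)^{\deg Q_n^{(i)}-k}$, so the reflected polynomial $\widehat Q_n^{(i)}(x):=(-1)^{\deg Q_n^{(i)}}Q_n^{(i)}(-x)=\sum_{k=0}^n \widehat a_{k:n}^{(i)}x^k$ has nonnegative coefficients $\widehat a_{k:n}^{(i)}=|a_{k:n}^{(i)}|$ and nonpositive roots, and $\lsem \widehat Q_n^{(i)}\rsem_n\toweak\mu_i$, the reflection of $\nu_i$ onto $[-\infty,0]$, with $\mu_i(\{0\})=\nu_i(\{0\})$ and $\mu_i(\{-\infty\})=\nu_i(\{+\infty\})$. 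The crucial algebraic observation is that the definition of $\boxtimes_n$ gives $\big|(-1)^{n-k}a_{k:n}^{(1)}a_{k:n}^{(2)}/\binom nk\big|=\widehat a_{k:n}^{(1)}\widehat a_{k:n}^{(2)}/\binom nk$, so the reflected convolution $\widehat{Q_n^{(1)}\boxtimes_n Q_n^{(2)}}$ is, coefficientwise, the Hadamard product of the reflected factors weighted by $1/\binom nk$.

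\emph{Weak convergence.} By Theorem~\ref{theo:zeros_imply_exp_profile}(a) the normalized reflected factors $\widehat Q_n^{(i)}/\widehat Q_n^{(i)}(1)$ possess profiles $g_i$ on $(\nu_i(\{0\}),1-\nu_i(\{+\infty\}))$. Since $\frac1n\log\binom nk\to -\alpha\log\alpha-(1-\alpha)\log(1-\alpha)$ uniformly on compact subsets of $(0,1)$, combining this with~\eqref{eq:zeros_imply_exp_profile_claim} and~\eqref{eq:divergence_outside_polys} I would show that $\widehat{Q_n^{(1)}\boxtimes_n Q_n^{(2)}}$ has the exponential profile
$$
g(\alpha)=g_1(\alpha)+g_2(\alpha)+\alpha\log\alpha+(1-\alpha)\log(1-\alpha)
$$
(up to an irrelevant additive normalization constant $M_g$) on the intersection $(\underline m,\overline m)$ of the two factor intervals, which is nonempty by hypothesis, while it equals $-\infty$ outside. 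To invoke Theorem~\ref{theo:exp_profile_implies_zeros} I still need the reflected product to be nonpositive-rooted; this follows from Theorem~\ref{thm:real_rootedness}(ii) once I check that its divisibility hypothesis holds for all large $n$. Indeed, the nonempty-intersection assumption forces $\nu_i(\{0\})<1-\nu_j(\{+\infty\})$ for all $i,j$, which rules out, for large $n$, both that a factor is a monomial and that one factor is divisible by $x$ raised to the minimal degree of the other. Theorem~\ref{theo:exp_profile_implies_zeros}(a) then yields $\lsem \widehat{Q_n^{(1)}\boxtimes_n Q_n^{(2)}}\rsem_n\toweak\mu$ on $[-\infty,0]$, and reflecting back establishes~\eqref{eq:theo:finite_free_mult_statement} with $\nu$ the reflection of $\mu$.

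\emph{Identification of the limit.} It remains to prove $\nu=\nu_1\boxtimes\nu_2$ when $\nu_1(\{+\infty\})=\nu_2(\{+\infty\})=0$. Differentiating the profile gives $g'(\alpha)=g_1'(\alpha)+g_2'(\alpha)+\log\frac{\alpha}{1-\alpha}$. By Theorem~\ref{theo:exp_profile_implies_zeros}(b) the map $\alpha\mapsto \eee^{-g'(\alpha)}$ is the inverse of $t\mapsto tG_\mu(t)$, and the analogous statement holds for each factor. Translating these inverse relations from the reflected measures back to $\nu_1,\nu_2,\nu$ through $G_{\nu}(s)=-G_\mu(-s)$ and the standard dictionary between $t\mapsto tG(t)$ and the $\psi$-transform, a direct computation — to be carried out with the definitions recalled in Sections~\ref{subsec:R_S_transf} and~\ref{subsec:free_conv_defs} — shows that $S_{\nu_i}(\alpha-1)=\frac{\alpha}{1-\alpha}\eee^{g_i'(\alpha)}$. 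The additive relation for $g'$ then turns into the multiplicative relation $S_\nu=S_{\nu_1}\cdot S_{\nu_2}$, which characterizes the free multiplicative convolution and hence identifies $\nu=\nu_1\boxtimes\nu_2$.

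\emph{Main obstacle.} The weak-convergence part is essentially automatic once the profile has been computed and real-rootedness secured. I expect the genuine difficulty to lie in the last step: making the identity $S_\nu=S_{\nu_1}S_{\nu_2}$ rigorous in the stated generality, where $\nu_1,\nu_2$ may be non-compactly supported and may charge $0$. This requires the analytic theory of $\boxtimes$ for arbitrary probability measures on $[0,\infty)$ (the domains of definition, injectivity, and range of the $S$-transform) rather than the classical compactly supported case, together with a careful treatment of the atom of $\nu_i$ at $0$, which corresponds to the left endpoint $\underline m=\max\{\nu_1(\{0\}),\nu_2(\{0\})\}$ of the profile interval.
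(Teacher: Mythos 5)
Your proposal is correct and takes essentially the same route as the paper's own proof: reflection to nonpositive-rooted polynomials, profiles of the two factors from Theorem~\ref{theo:zeros_imply_exp_profile}, the profile formula $g=g_1+g_2+\alpha\log\alpha+(1-\alpha)\log(1-\alpha)$ for the binomially weighted Hadamard product of coefficients, real-rootedness via Theorem~\ref{thm:real_rootedness}(ii) secured by the Portmanteau divisibility argument, weak convergence from Theorem~\ref{theo:exp_profile_implies_zeros}, and identification of the limit through multiplicativity of the $S$-transform, where your relation $S_{\nu_i}(\alpha-1)=\frac{\alpha}{1-\alpha}\,\eee^{g_i'(\alpha)}$ is precisely~\eqref{eq:s_trans_via_profile} of Lemma~\ref{lem:r_and_s_transform_vs_profile} and the appeal to the general analytic theory of $\boxtimes$ matches the paper's citation of Bercovici--Voiculescu. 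One point in your favour: your derivative $g'(\alpha)=g_1'(\alpha)+g_2'(\alpha)+\log\frac{\alpha}{1-\alpha}$ is the correct one, whereas the paper's~\eqref{eq:proof_mult_free_g_prime} contains a sign typo (it writes $\log\alpha+\log(1-\alpha)$ instead of $\log\alpha-\log(1-\alpha)$), which is silently compensated in the final display~\eqref{eq:proof_mult_s_transf_product}.
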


\begin{remark}
The intervals $(\nu_1(\{0\}), 1 - \nu_1 (\{+\infty\}))$ and  $(\nu_2(\{0\}), 1 - \nu_2 (\{+\infty\}))$ have a nonempty intersection if and only if
\begin{equation}\label{eq:non-empty_intersection}
\max\{\nu_1(\{0\}),\nu_2(\{0\})\}+\max\{\nu_1 (\{+\infty\}),\nu_2 (\{+\infty\})\}<1.
\end{equation}
\end{remark}

We shall recall a definition of $\boxtimes$ in Sections~\ref{subsec:R_S_transf} and \ref{subsec:free_conv_defs}. The proof of Theorem~\ref{theo:finite_free_mult_conv} is given in Section~\ref{subsec:proof_multipl_conv}.

\section{Applications to the repeated action of \texorpdfstring{$z^a (\dd / \dd z)^b$}{z\textasciicircum a (d/dz)\textasciicircum b}}\label{sec:rep_diff}
As another application, let us study how repeatedly differentiating a polynomial affects its zeros.
The history of this research direction can be traced
back to at least the Gauss--Lucas theorem, stating that the
roots of the derivative of any polynomial are in the convex hull of the roots of the original polynomial.

\subsection{Main result on the repeated action of \texorpdfstring{$z^a (\dd / \dd z)^b$}{z\textasciicircum a (d/dz)\textasciicircum b}}
Consider a sequence of polynomials $Q_n(x)$, $n\in \N$,  such that $\deg Q_n\leq n$ and $\lsem Q_n \rsem_n \to \nu$ weakly on $\C$.
Is it true that the empirical distribution of zeros of the $\ell_n$-th derivative of $Q_n$ converges weakly as $n\to\infty$, where the order
of differentiation satisfies $\ell_n\sim \kappa n$ for $\kappa\in [0,1)$? For real-rooted polynomials with all roots in a fixed compact interval, it was shown in~\cite{arizmendi,HK21,Steiner21} that the limiting distribution of zeros of $Q_n^{(\ell_n)}$ turns out to be $\nu^{\boxplus \frac{1}{1-\kappa }}\big(\frac{\cdot}{1-\kappa }\big)$, where $\nu^{\boxplus \alpha}$ denotes the free self-convolution of $\nu$ of order $\alpha \geq 1$; see, \cite[Corollary~14.13]{nica_speicher_book} and~\cite[Section~1.2]{shlyakhtenko_tao} for its definition and pointers to the literature.

As it was observed in \cite{diff-paper}, it will be instructive to consider the repeated action of the more general differential operator
$$
\mathcal{A}_{a,b} = n^{-b} z^a (\dd / \dd z)^b,\qquad a,b\in \N_0,
$$
acting on the space $\R[z]$ of polynomials over $\R$, where $\N_0:=\{0,1,2,3,\ldots\}$. Our aim is to describe the limit of $\lsem \mathcal{A}_{a,b}^\ell Q_n(z)\rsem_n$, as $\ell, n\to\infty$ such that $\ell/n \to \kappa >0$. As a first step, let us rewrite $\lsem \mathcal{A}_{a,b}^\ell Q_n(z)\rsem_n$ in terms of the finite free convolution $\boxtimes_n$. Define
$$
\Delta:= a-b,
$$
which describes the amount of degree-increase (if $\Delta\geq 0$) or decrease (if $\Delta\leq 0$). It is easy to check that for  $\ell\in \N$, the $\ell$-th power of $\mathcal{A}_{a,b}$ acts on the monomial $z^j$ with $j\in \N_0$ as
$$
\mathcal{A}_{a,b}^\ell (z^j) = \frac{1}{n^{\ell b}} \frac{j!}{(j-b)!} \frac{(j+\Delta)!}{(j-b+\Delta)!}
\ldots \frac{(j+(\ell-1)\Delta)!}{(j-b + (\ell-1)\Delta)!} \cdot z^{j+\ell \Delta},
$$
provided $j\geq b$ (if $\Delta\geq 0$) or $j + \ell\Delta \geq a$ (if $\Delta\leq 0$). Let $\mathcal{J_\ell}$ be the set of $j\in \N_0$ fulfilling this condition. For $j\notin \mathcal{J}_\ell$ we have $\mathcal{A}_{a,b}^\ell (z^j) = 0$.
If $Q_n(z)  = \sum_{j=0}^n (-1)^{n-j} a_{j:n} z^j$ is a polynomial, then
$$
z^{-\ell \Delta} \cdot \mathcal{A}_{a,b}^\ell (Q_n(z))
=
\sum_{\substack{j\in \{0,\ldots, n\}\\ j\in \mathcal J_\ell}}
\frac{1}{n^{\ell b}} \frac{j!}{(j-b)!} \frac{(j+\Delta)!}{(j-b+\Delta)!}
\ldots \frac{(j+(\ell-1)\Delta)!}{(j-b + (\ell-1)\Delta)!} \cdot (-1)^{n-j} a_{j:n} z^{j}.
$$
Using Definition~\ref{def:finite_free_convolution}, this can be written as
\begin{equation}\label{eq:diff_as_finite_free_mult_convolution}
z^{-\ell \Delta} \cdot \mathcal{A}_{a,b}^\ell (Q_n(z)) = Q_n(z) \boxtimes_n T_{n,\ell}^{(a,b)}(z),
\end{equation}
where
\begin{align}
T_{n,\ell}^{(a,b)}(z)
&=
\sum_{\substack{j\in \{0,\ldots, n\}\\ j\in \mathcal{J}_\ell}}
(-1)^{n-j}\binom nj
\frac{1}{n^{\ell b}} \frac{j!}{(j-b)!} \frac{(j+\Delta)!}{(j-b+\Delta)!}\ldots \frac{(j+(\ell-1)\Delta)!}{(j-b + (\ell-1)\Delta)!} \cdot z^{j}\label{eq:q_n_m_definition}\\
&=z^{-\ell\Delta}\mathcal{A}_{a,b}^\ell \left(\sum_{j=0}^{n}(-1)^{n-j}\binom{n}{j}z^j\right)=z^{-\ell\Delta}\mathcal{A}_{a,b}^\ell ((z-1)^n)\notag.
\end{align}
Let us mention some particular cases of $T_{n,\ell}^{(a,b)}(z)$. If $a=b\in \N$, then $\Delta = 0$ and
$$
T_{n,\ell}^{(a,a)}(z) = n^{-\ell a} \sum_{j=a}^n (-1)^{n-j}\binom nj (j(j-1) \ldots (j-a+1))^\ell z^j.
$$
In particular, if $a = b = 1$, then $T_{n,\ell}^{(1,1)}(z) = \sum_{j=1}^n (-1)^{n-j}\binom nj j^\ell z^j$. These polynomials are a special case of Sidi polynomials appearing in~\cite[Theorems~4.2, 4.3]{sidi_numerical_quad_nonlinear_seq} and~\cite[Theorem~3.1]{sidi_num_quad_infinite_range}. For results on the asymptotic zero distribution of Sidi and related polynomials, see~\cite[Theorem~1.3]{lubinski_sidi_strong_asympt}, \cite[Theorem~2]{lubinski_stahl_some_explicit} and~\cite{lubinski_sidi_composite_biorthogonal}.

Recalling formula~\eqref{eq:diff_as_finite_free_mult_convolution} we shall first study the roots of $T_{n,\ell}^{(a,b)}$. The proof of the results below will be given in Section~\ref{sec:proof_repeated}.

\begin{lemma}\label{lem:q_n_m_roots_real}
For every $a,b\in \N_0$, $\ell\in \N$ and $n \in \N_0\cap \mathcal{I}_{\ell}$, the polynomial $T_{n,\ell}^{(a,b)}(z)$ does not vanish identically and all of its roots are nonnegative. If $\Delta \geq 0$, then $z=0$ is a root of $T_{n,\ell}^{(a,b)}(z)$ of multiplicity $b$. If $\Delta \leq 0$, then  $z=0$ is a root of $T_{n,\ell}^{(a,b)}(z)$ of multiplicity $a-\ell \Delta$.
\end{lemma}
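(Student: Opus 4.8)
The plan is to decouple the two assertions. The non-vanishing of $T_{n,\ell}^{(a,b)}$ together with the \emph{exact} order of its zero at the origin can be read off directly from the explicit expansion~\eqref{eq:q_n_m_definition}, whereas the nonnegativity of \emph{all} roots is best obtained from the structural fact that the operator $\mathcal{A}_{a,b}=n^{-b}z^aD^b$, with $D:=\dd/\dd z$, preserves the class of polynomials whose roots are all real and nonnegative.

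For the first part I would start from~\eqref{eq:q_n_m_definition}, written compactly as
$$
T_{n,\ell}^{(a,b)}(z)=\sum_{j\in\{0,\dots,n\}\cap\mathcal{J}_\ell}(-1)^{n-j}\binom nj\frac{1}{n^{\ell b}}\prod_{k=0}^{\ell-1}\frac{(j+k\Delta)!}{(j-b+k\Delta)!}\,z^{j},
$$
and observe that each factor $\frac{(j+k\Delta)!}{(j-b+k\Delta)!}=(j+k\Delta)(j+k\Delta-1)\cdots(j+k\Delta-b+1)$ is strictly positive precisely when $j+k\Delta\ge b$, which is exactly the defining condition $j\in\mathcal{J}_\ell$. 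Hence every coefficient indexed by $j\in\{0,\dots,n\}\cap\mathcal{J}_\ell$ is nonzero, and the hypothesis $n\in\mathcal{I}_\ell$ guarantees that this index set is nonempty (it forces $n\in\mathcal{J}_\ell$), so $T_{n,\ell}^{(a,b)}\not\equiv0$. The order of the zero at the origin then equals the smallest surviving exponent $j_{\min}:=\min\big(\{0,\dots,n\}\cap\mathcal{J}_\ell\big)$. Inserting the explicit description of $\mathcal{J}_\ell$, namely $\{j:j\ge b\}$ when $\Delta\ge0$ and $\{j:j+\ell\Delta\ge a\}$ when $\Delta\le0$, gives $j_{\min}=b$ in the first case and $j_{\min}=a-\ell\Delta$ in the second, matching the claimed multiplicities.

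For the second part I would establish the preservation statement: if $Q$ is a nonzero real polynomial with only real nonnegative roots and $\deg Q\ge b$, then $z^aD^bQ$ is again such a polynomial, of degree $\deg Q+\Delta$. Indeed, by Rolle's theorem the derivative of a real-rooted polynomial is real-rooted, and by the Gauss--Lucas theorem its roots lie in the convex hull of the roots of $Q$, hence in $[0,\infty)$; iterating yields the same for $D^bQ$ (nonzero since $\deg Q\ge b$), and multiplication by $z^a$ merely adjoins a zero of order $a$ at the origin. Starting from $Q_0=(z-1)^n$ (real-rooted, root $1>0$, degree $n$) and applying $\mathcal{A}_{a,b}$ repeatedly, the degree after $k$ steps equals $n+k\Delta$, and $n\in\mathcal{I}_\ell$ is exactly the statement that $n+k\Delta\ge b$ for all $k=0,\dots,\ell-1$, so no $D^b$ annihilates the polynomial. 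Thus $\mathcal{A}_{a,b}^\ell((z-1)^n)$ is a nonzero polynomial with only real nonnegative roots. Since $T_{n,\ell}^{(a,b)}=z^{-\ell\Delta}\mathcal{A}_{a,b}^\ell((z-1)^n)$ differs from it only by a power of $z$—a division by $z^{\ell\Delta}$ when $\Delta\ge0$, legitimate because the origin is a zero of $\mathcal{A}_{a,b}^\ell((z-1)^n)$ of order $b+\ell\Delta\ge\ell\Delta$ by the first part, and a multiplication when $\Delta\le0$—all roots of $T_{n,\ell}^{(a,b)}$ are nonnegative as well.

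The preservation lemma and the read-off of $j_{\min}$ are routine. The main obstacle is the degree bookkeeping across the $\ell$ iterations: one must check that $n\in\mathcal{I}_\ell$ really does keep $\deg Q_k=n+k\Delta\ge b$ at every intermediate step (so that the leading term is never killed and the polynomial never collapses to zero), reconciling the two regimes in which this constraint is binding—at $k=0$ when $\Delta\ge0$ and at $k=\ell-1$ when $\Delta<0$—and verifying that the latter constraint $n+(\ell-1)\Delta\ge b$ is equivalent to $n\ge a-\ell\Delta$, which is precisely the nonemptiness condition for $\{0,\dots,n\}\cap\mathcal{J}_\ell$ used in the first part.
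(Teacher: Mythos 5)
Your proof is correct, and its skeleton coincides with the paper's: both read the nonvanishing and the exact multiplicity at $z=0$ off the explicit expansion~\eqref{eq:q_n_m_definition} (via the smallest surviving index $j_{\min}$, which is $b$ when $\Delta\geq 0$ and $a-\ell\Delta$ when $\Delta\leq 0$), and both propagate real-rootedness through the $\ell$ iterations of $\mathcal{A}_{a,b}$ by Rolle's theorem. Where you genuinely diverge is the step from ``all roots real'' to ``all roots nonnegative.'' The paper gets this in one line from the sign pattern of the coefficients: since the coefficients of $T_{n,\ell}^{(a,b)}$ carry the alternating signs $(-1)^{n-j}$, the polynomial $(-1)^n T_{n,\ell}^{(a,b)}(-z)$ has nonnegative coefficients, hence no positive roots, so the (real) roots of $T_{n,\ell}^{(a,b)}$ must all lie in $[0,\infty)$. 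You instead carry nonnegativity of the roots as an invariant of the iteration via Gauss--Lucas: each application of $D^b$ keeps the roots in the convex hull of the previous ones, hence in $[0,\infty)$, and multiplication by $z^a$ only adjoins zeros at the origin. Both arguments are sound. The paper's is shorter once real-rootedness is established, but it silently relies on the explicit coefficient formula; yours never inspects coefficient signs and makes explicit the degree bookkeeping ($\deg Q_k = n+k\Delta\geq b$ at every step, equivalent to $n\in\mathcal{J}_\ell$) and the legitimacy of the division by $z^{\ell\Delta}$ when $\Delta\geq 0$ --- points the paper's one-line proof leaves implicit.
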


\begin{theorem}\label{thm:q_n_m_measures_converge}
If $n,\ell\to\infty$ such that $\ell/n \to \kappa >0$ and $1+\Delta \kappa>0$, then $\lsem T_{n,\ell}^{(a,b)}(z)\rsem_n$ converges weakly to a certain compactly supported probability measure $\nu_{a,b;\kappa}$ on $[0,\infty)$ with $\nu_{a,b;\kappa}(\{0\})=\max(0,-\Delta \kappa)$.
\begin{itemize}
\item If $\Delta\ge 0$, then $\nu_{a,b;\kappa}$ is $\boxtimes$-infinitely divisible.
\item For $\Delta=0$ (that is, if $a=b$), $\nu_{a,a;\kappa}$ is $\boxtimes$-free-Poisson distribution.
\item If $\Delta<0$, then $\nu_{a,b;\kappa}=(-\Delta\kappa\delta_0+(1+\Delta\kappa)\delta_1)^{\boxtimes {\frac b {-\Delta}}}$ is a free multiplicative self-convolution (with possibly fractional order $\frac b {-\Delta}\geq 1$) of the Bernoulli measure $-\Delta\kappa\delta_0+(1+\Delta\kappa)\delta_1$.
\end{itemize}
In particular, $\nu_{0,1;\kappa}=\kappa\delta_0+(1-\kappa)\delta_{1}$ for $a=0$, $b=1$ and $\kappa\in (0,1]$. If $a=b\in \N$ and $\kappa>0$ is arbitrary, then $\nu_{a,a;\kappa}$ is a probability measure on $[0,1]$ with the following
\begin{align}
&\text{Cauchy transform:}&   &G_{a,a;\kappa}(t) = \frac{1} {t \left(1 + \frac 1 {a\kappa} W_0\left(-\frac {a\kappa} {t \eee^{a\kappa}}\right)\right)},  &\quad& t\in  \C \backslash [0,a\kappa \eee^{1-a\kappa}],
\label{eq:theo:repeated_action_stieltjes}\\
&\text{Lebesgue density:}&      &p_{a,a;\kappa}(x) = -\frac{1}{\pi x} \Im\frac{1}{1+ \frac{1}{a\kappa} W_0\left(-\frac {a\kappa} {x \eee^{a\kappa}}+\ii 0\right)},  &\quad& x\in  (0,a\kappa \eee^{1-a\kappa}),\label{eq:theo:repeated_action_density}\\
& & &\text{of the total mass }\min(a\kappa,1) & &\notag \\
& & & \text{plus an atom of the mass } \max(1-a\kappa,0) \text{ at } & & x=1.\notag
\end{align}
Here, $W_0$ denotes the principal branch of the Lambert $W$-function satisfying $W_0(z) \eee^{W_0(z)} = z$, see~\cite[Chapter~1]{mezo_book_lambert_function}, and $W_0(y+\ii 0) := \lim_{\eps \to 0+} W_0(y+\ii \eps)$ for $y<-1/\eee$.
\end{theorem}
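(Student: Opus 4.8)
The plan is to reduce the whole statement to the profile machinery of Theorem~\ref{theo:exp_profile_implies_zeros} and then extract every structural assertion from the resulting profile. Since Lemma~\ref{lem:q_n_m_roots_real} guarantees that $T_{n,\ell}^{(a,b)}$ has only nonnegative roots, the reflected polynomial $R_{n}(z):=(-1)^{n}T_{n,\ell}^{(a,b)}(-z)$ has only nonpositive roots and nonnegative coefficients, namely the numbers $c_{j}=\binom{n}{j}\,n^{-\ell b}\prod_{i=0}^{\ell-1}\frac{(j+i\Delta)!}{(j-b+i\Delta)!}$ read off from \eqref{eq:q_n_m_definition}. The target measure $\nu_{a,b;\kappa}$ is then the reflection across $0$ of the measure $\mu$ produced by Theorem~\ref{theo:exp_profile_implies_zeros} applied to $(R_{n})_{n}$, so it suffices to compute the exponential profile of $(c_{j})$ and to invert it.

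First I would compute the profile. Writing $j=\lfloor\alpha n\rfloor$, $\ell\sim\kappa n$ and $i=sn$, Stirling's formula gives $\frac1n\log\binom nj\to-\alpha\log\alpha-(1-\alpha)\log(1-\alpha)$, while each block $\frac{(j+i\Delta)!}{(j-b+i\Delta)!}$ is a product of $b$ consecutive integers close to $(j+i\Delta)^{b}=(n(\alpha+s\Delta))^{b}$; the prefactor $n^{-\ell b}$ cancels precisely the $b\kappa\log n$ generated by these blocks, and the remaining Riemann sum $\frac bn\sum_{i}\log(\alpha+(i/n)\Delta)$ converges to $b\int_{0}^{\kappa}\log(\alpha+s\Delta)\,\dd s$. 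This yields
\begin{equation*}
g(\alpha)=-\alpha\log\alpha-(1-\alpha)\log(1-\alpha)+b\int_{0}^{\kappa}\log(\alpha+s\Delta)\,\dd s,
\end{equation*}
on the interval $(\underline m,\overline m)=(\max(0,-\Delta\kappa),1)$. The hypothesis $1+\Delta\kappa>0$ is exactly what keeps the integrand's argument positive on the relevant range and the interval nonempty, and the constraint $j\in\mathcal{J}_{\ell}$ forces the smallest admissible index to sit at $\alpha=\max(0,-\Delta\kappa)$, which by Theorem~\ref{theo:exp_profile_implies_zeros}(e) gives $\mu(\{0\})=\underline m$ and hence $\nu_{a,b;\kappa}(\{0\})=\max(0,-\Delta\kappa)$. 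I expect the main obstacle to lie here: the asymptotics of $(c_{j})$ must be controlled \emph{uniformly} in $\alpha$ on compact subintervals and near the edges, where for $\Delta<0$ the integrand develops an integrable logarithmic singularity as $\alpha\downarrow-\Delta\kappa$. Establishing this uniform convergence to a genuine profile in the sense of Definition~\ref{def:exp_profile} is the technical heart of the argument.

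Differentiating gives $g'(\alpha)=\log\frac{1-\alpha}{\alpha}+\frac b\Delta\log\frac{\alpha+\kappa\Delta}{\alpha}$ (with the $\Delta=0$ limit $\log\frac{1-\alpha}{\alpha}+\frac{b\kappa}{\alpha}$), so Theorem~\ref{theo:exp_profile_implies_zeros}(b) together with the reflection identity $tG_{\nu}(t)=(-t)G_{\mu}(-t)$ shows that $t\mapsto tG_{\nu}(t)$ on $(-\infty,0)$ is the inverse of $\alpha\mapsto-\eee^{-g'(\alpha)}=-\frac{\alpha}{1-\alpha}\big(\tfrac{\alpha}{\alpha+\kappa\Delta}\big)^{b/\Delta}$. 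Pushing this through the standard dictionary relating $tG_{\nu}(t)$ to the $S$-transform produces the clean formula $S_{\nu}(w)=\big(1+\frac{\kappa\Delta}{1+w}\big)^{b/\Delta}$ (understood as $\eee^{b\kappa/(1+w)}$ when $\Delta=0$). The three structural claims then follow by inspection. For $\Delta<0$ one recognizes $S_{\nu}=S_{\beta}^{\,b/(-\Delta)}$ with $\beta=-\Delta\kappa\,\delta_{0}+(1+\Delta\kappa)\delta_{1}$ the Bernoulli law, whose $S$-transform is $\frac{1+w}{(1+\Delta\kappa)+w}$; by multiplicativity of the $S$-transform this gives $\nu_{a,b;\kappa}=\beta^{\boxtimes b/(-\Delta)}$, the fractional power $b/(-\Delta)\geq1$ being well defined. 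For $\Delta=0$ the semigroup identity $\eee^{a\kappa_{1}/(1+w)}\eee^{a\kappa_{2}/(1+w)}=\eee^{a(\kappa_{1}+\kappa_{2})/(1+w)}$ exhibits $(\nu_{a,a;\kappa})_{\kappa}$ as a $\boxtimes$-convolution semigroup, i.e. the $\boxtimes$-free-Poisson law, and in particular each member is $\boxtimes$-infinitely divisible since $S_{\nu}^{1/m}$ stays in the family with parameter $\kappa/m$. For $\Delta>0$ the $\boxtimes$-infinite divisibility follows by checking that $\log S_{\nu}$ admits the Pick--Nevanlinna representation characterizing $\boxtimes$-infinitely divisible laws in~\cite[Section~6]{bercovici_voiculescu}.

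Finally, for the explicit case $a=b$ I would solve the transcendental relation $t=-\frac{\alpha}{1-\alpha}\eee^{-a\kappa/\alpha}$ for $\alpha=tG_{\nu}(t)$. The substitution $\alpha=\big(1+\frac{1}{a\kappa}W_{0}(y)\big)^{-1}$ turns it into the defining equation $W_{0}(y)\eee^{W_{0}(y)}=y$ of the principal Lambert branch with $y=-\frac{a\kappa}{t\eee^{a\kappa}}$, which yields exactly \eqref{eq:theo:repeated_action_stieltjes}, and the density \eqref{eq:theo:repeated_action_density} then comes from the Stieltjes--Perron inversion formula. The branch point $y=-1/\eee$ of $W_{0}$ occurs at $t=a\kappa\,\eee^{1-a\kappa}$, pinning down the right edge of the support and showing that the absolutely continuous part lives on $(0,a\kappa\,\eee^{1-a\kappa})$; its total mass $\min(a\kappa,1)$ and the compensating atom of mass $\max(1-a\kappa,0)$ at $x=1$ are read off from $\lim_{t\to-\infty}tG_{\nu}(t)=1$ (no mass at $+\infty$) together with the residue of $G_{\nu}$ at the simple pole $t=1$, the pole being present precisely when $W_{0}(-a\kappa\,\eee^{-a\kappa})=-a\kappa$, i.e. when $a\kappa\le1$. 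This residue computation also confirms compact support when $a=b$; for $\Delta\neq0$ compactness is immediate, being inherited for $\Delta<0$ from the finitely supported $\beta$ through the $\boxtimes$-power, and for $\Delta>0$ following since the analytic continuation of $\alpha\mapsto-\eee^{-g'(\alpha)}$ has its first real branch point at a finite positive $t$.
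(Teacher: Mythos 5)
Your proposal is correct and follows essentially the same route as the paper: reflect $T_{n,\ell}^{(a,b)}$ to a nonpositive-rooted sequence, compute the pointwise exponential profile $g(\alpha)=-\alpha\log\alpha-(1-\alpha)\log(1-\alpha)+b\int_0^\kappa\log(\alpha+s\Delta)\,\dd s$ on $\mathcal{I}_{\Delta,\kappa}$, apply Theorem~\ref{theo:exp_profile_implies_zeros}, pass to the $S$-transform via Lemma~\ref{lem:r_and_s_transform_vs_profile} to identify the three structural cases, and invert via the Lambert $W$-function when $a=b$. The only deviations are cosmetic: you obtain the atom at $x=1$ from the residue of $G_\nu$ at its pole (the paper reads it off from the multiplicity $\max(n-a\ell,0)$ of the root $1$ of $T_{n,\ell}^{(a,a)}$), and your worry about \emph{uniform} coefficient asymptotics is unnecessary, since Definition~\ref{def:exp_profile} only requires pointwise convergence in $\alpha$.
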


Formula~\eqref{eq:diff_as_finite_free_mult_convolution} in conjunction with Theorems~\ref{theo:finite_free_mult_conv} and~\ref{thm:q_n_m_measures_converge} imply the following.
\begin{theorem}\label{theo:repeated_diff_general}
Fix $a,b\in\mathbb{N}_0$. For every $n$ let  $Q_n(z)$ be a polynomials of degree $\leq n$ with only nonnegative roots such that $\lsem Q_n \rsem_n \toweak \nu$ for some probability measure $\nu$ on $[0,+\infty)$. Assume that  $n,\ell\to\infty$ such that $\ell/n\to \kappa>0$ and $1+(a-b)\kappa>0$. Then,
$$
\left\lsem z^{-\ell(a-b)}\left(z^a (\dd / \dd z)^b\right)^\ell Q_n(z)\right\rsem_{n} \toweak \nu \boxtimes \nu_{a,b; \kappa}.
$$
\end{theorem}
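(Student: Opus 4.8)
The plan is to combine the algebraic identity~\eqref{eq:diff_as_finite_free_mult_convolution}, which rewrites the repeated action of $z^a(\dd/\dd z)^b$ as a finite free multiplicative convolution, with the two convergence results already established: Theorem~\ref{theo:finite_free_mult_conv} (the statement $\boxtimes_n \to \boxtimes$) and Theorem~\ref{thm:q_n_m_measures_converge} (the limiting zero distribution of the auxiliary polynomials $T_{n,\ell}^{(a,b)}$). First I would invoke~\eqref{eq:diff_as_finite_free_mult_convolution} to write
$$
z^{-\ell(a-b)}\left(z^a(\dd/\dd z)^b\right)^\ell Q_n(z) = Q_n(z)\boxtimes_n T_{n,\ell}^{(a,b)}(z),
$$
so that the empirical zero measure we care about is exactly $\lsem Q_n\boxtimes_n T_{n,\ell}^{(a,b)}\rsem_n$. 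Multiplication by $z^{-\ell(a-b)}$ only shifts the multiplicity of the zero at the origin (and hence the atom at $0$), so it does not affect weak convergence on $[0,+\infty]$ away from the bookkeeping of that atom, which I would track separately via Lemma~\ref{lem:q_n_m_roots_real}.

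Next I would verify that the hypotheses of Theorem~\ref{theo:finite_free_mult_conv} are met for the two factors. By assumption $\lsem Q_n\rsem_n \toweak \nu$ with $\nu$ supported on $[0,+\infty)$, and Theorem~\ref{thm:q_n_m_measures_converge} gives $\lsem T_{n,\ell}^{(a,b)}\rsem_n \toweak \nu_{a,b;\kappa}$ with $\nu_{a,b;\kappa}$ a compactly supported measure on $[0,\infty)$ with a known atom $\nu_{a,b;\kappa}(\{0\})=\max(0,-\Delta\kappa)$ at the origin. The nonempty-intersection condition~\eqref{eq:non-empty_intersection} must then be checked: since $\nu$ lives on $[0,+\infty)$ we have $\nu(\{+\infty\})=0$, and $\nu_{a,b;\kappa}$ is compactly supported so $\nu_{a,b;\kappa}(\{+\infty\})=0$ as well; thus the condition reduces to $\max\{\nu(\{0\}),\nu_{a,b;\kappa}(\{0\})\}<1$, which holds because $Q_n$ is non-constant (so $\nu(\{0\})<1$) and $\nu_{a,b;\kappa}(\{0\})=\max(0,-\Delta\kappa)<1$ under the standing assumption $1+\Delta\kappa>0$. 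With these verified, Theorem~\ref{theo:finite_free_mult_conv} yields $\lsem Q_n\boxtimes_n T_{n,\ell}^{(a,b)}\rsem_n \toweak \nu\boxtimes\nu_{a,b;\kappa}$ weakly on $[0,+\infty]$. Because both limit measures have no atom at $+\infty$, the final clause of Theorem~\ref{theo:finite_free_mult_conv} identifies the limit as the genuine free multiplicative convolution $\nu\boxtimes\nu_{a,b;\kappa}$.

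The step requiring the most care is reconciling the index $n$ with the two-parameter limit $n,\ell\to\infty$, $\ell/n\to\kappa$. Theorem~\ref{theo:finite_free_mult_conv} is phrased for sequences indexed by a single parameter $n$, with both factors of degree at most $n$; here the second factor $T_{n,\ell}^{(a,b)}$ depends on both $n$ and $\ell$, and its degree changes by $\ell\Delta$. I would therefore treat $T_{n,\ell}^{(a,b)}$ along the diagonal $\ell=\ell_n$ with $\ell_n/n\to\kappa$ as a single sequence in $n$, checking that its degree stays $\leq n$ (guaranteed by $1+\Delta\kappa>0$, which forces $\deg T_{n,\ell}^{(a,b)}=n+\ell\Delta\le n$ for $\Delta\le 0$ and requires a short degree computation using Lemma~\ref{lem:q_n_m_roots_real} for $\Delta\ge 0$), so that the convolution $\boxtimes_n$ is well defined at each $n$ and the hypotheses of Theorem~\ref{theo:finite_free_mult_conv} apply verbatim to this diagonal sequence. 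Once the degree bookkeeping and the behavior of the atom at $0$ under the $z^{-\ell\Delta}$ normalization are handled, the conclusion follows directly, completing the proof.
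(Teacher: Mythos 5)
Your proposal follows exactly the paper's own route: the paper's entire proof of this theorem consists of the single remark that identity~\eqref{eq:diff_as_finite_free_mult_convolution} combined with Theorems~\ref{theo:finite_free_mult_conv} and~\ref{thm:q_n_m_measures_converge} yields the claim, and your write-up is a detailed unpacking of precisely that argument (including the correct observations that $\nu(\{+\infty\})=0$, that $\nu_{a,b;\kappa}$ is compactly supported with $\nu_{a,b;\kappa}(\{0\})=\max(0,-\Delta\kappa)<1$ under $1+\Delta\kappa>0$, and that the final clause of Theorem~\ref{theo:finite_free_mult_conv} identifies the limit as a genuine free multiplicative convolution).

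There is, however, one step in your verification that is wrong: non-constancy of $Q_n$ does \emph{not} imply $\nu(\{0\})<1$. Take $Q_n(z)=z^n$, or $Q_n(z)=(z-\tfrac1n)^n$: these are non-constant, have only nonnegative roots, and satisfy $\lsem Q_n\rsem_n\toweak\delta_0$, so $\nu(\{0\})=1$. The hypotheses of Theorem~\ref{theo:repeated_diff_general} do not exclude this situation, and in it the nonempty-intersection condition~\eqref{eq:non-empty_intersection} fails, so Theorem~\ref{theo:finite_free_mult_conv} cannot be invoked at all. The conclusion is still expected to hold there (one has $\delta_0\boxtimes\nu_{a,b;\kappa}=\delta_0$, and in the examples above it can be checked by hand), but it genuinely requires a separate argument rather than a citation of Theorem~\ref{theo:finite_free_mult_conv}. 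To be fair, the paper's one-sentence proof silently passes over the same degenerate case; you are the only one of the two who tries to address it, but the justification you offer is invalid. Two further slips, harmless for the argument: identity~\eqref{eq:diff_as_finite_free_mult_convolution} involves $\mathcal{A}_{a,b}^\ell=n^{-\ell b}\left(z^a(\dd/\dd z)^b\right)^\ell$, so your displayed identity is off by the constant factor $n^{\ell b}$ (irrelevant for zeros, and the factor $z^{-\ell\Delta}$ is already built into the identity, so no separate bookkeeping of the atom at $0$ is needed); and for $\Delta\le 0$ one has $\deg T_{n,\ell}^{(a,b)}=n$ for all large $n$ (here $z^{-\ell\Delta}$ is a \emph{nonnegative} power of $z$, so it raises the degree of $\mathcal{A}_{a,b}^\ell((z-1)^n)$ back to $n$), not $n+\ell\Delta$ --- though the bound $\deg\le n$, which is all you need, holds either way.
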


Taking $a=0$, $b=1$ we recover the limit distribution of repeatedly differentiated real-rooted polynomials which has been obtained in \cite{arizmendi,HK21,Steiner21} in terms of the free additive self-convolution.

\begin{corollary}\label{cor:repeated_diff}
For every $n\in\N$ let $Q_n(x)$ be a polynomial of degree $\leq n$. Suppose that $\lsem Q_n\rsem_n \toweak \nu$ for some probability measure $\nu$ on $[A,\infty)$ and that all roots of all $Q_n$'s  are bounded below by $A$, where $A\in \R$.   If $n,\ell\to\infty$ such that $\ell/n\to \kappa\in (0,1)$, then
$\lsem (\tfrac{\dd}{\dd z})^{\ell} Q_n\rsem_{n-\ell}\toweak \left(\nu^{\boxplus\frac{1}{1-\kappa}}\right)\big(\tfrac{\cdot}{1-\kappa}\big).$
\end{corollary}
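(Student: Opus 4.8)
The plan is to derive the corollary from the special case $a=0$, $b=1$ of Theorem~\ref{theo:repeated_diff_general}; the real content is converting the multiplicative statement there into the additive form claimed here. Throughout, for $c>0$ write $D_c\eta$ for the pushforward of a measure $\eta$ under $x\mapsto cx$, so that the target measure is $D_{1-\kappa}\big(\nu^{\boxplus\frac{1}{1-\kappa}}\big)$. First I would remove the lower bound $A$: since differentiation commutes with translation, $\tilde Q_n(z):=Q_n(z+A)$ has only nonnegative roots, $\lsem\tilde Q_n\rsem_n\toweak\tilde\nu$ with $\tilde\nu$ the pushforward of $\nu$ under $x\mapsto x-A$, and $\tilde Q_n^{(\ell)}(z)=Q_n^{(\ell)}(z+A)$, so that $\lsem Q_n^{(\ell)}\rsem_{n-\ell}$ is just the $x\mapsto x+A$ shift of $\lsem\tilde Q_n^{(\ell)}\rsem_{n-\ell}$. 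Using $\delta_{-A}^{\boxplus t}=\delta_{-tA}$ together with the commutation of dilation and translation, one checks directly that this final shift recombines the answer for $\tilde\nu$ into $D_{1-\kappa}\big(\nu^{\boxplus\frac{1}{1-\kappa}}\big)$. Hence I may assume $A=0$ and write $\nu$ for $\tilde\nu$.

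With $a=0$, $b=1$ we have $a-b=-1$, hence $z^{-\ell(a-b)}=z^{\ell}$ and $(z^a(\dd/\dd z)^b)^\ell=(\dd/\dd z)^\ell$, so Theorem~\ref{theo:repeated_diff_general} yields the weak convergence $\lsem z^{\ell}Q_n^{(\ell)}\rsem_n\toweak\rho:=\nu\boxtimes\nu_{0,1;\kappa}$, where $\nu_{0,1;\kappa}=\kappa\delta_0+(1-\kappa)\delta_1$ by Theorem~\ref{thm:q_n_m_measures_converge}. The theorem thus already supplies convergence; it remains only to identify the limit in the normalization of the corollary. Since $\nu$ carries no mass at $+\infty$, the portmanteau theorem gives $\deg Q_n/n\to1$, and the polynomial $z^{\ell}Q_n^{(\ell)}$ differs from $Q_n^{(\ell)}$ only by $\ell$ additional roots at the origin, while the two normalizations differ by the factor $n/(n-\ell)$. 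Evaluating both empirical measures on an arbitrary $f\in C([0,+\infty])$ and using $\ell/n\to\kappa$, I obtain $\lsem Q_n^{(\ell)}\rsem_{n-\ell}\toweak\frac{1}{1-\kappa}(\rho-\kappa\delta_0)$, which is a probability measure because $\rho(\{0\})\geq\kappa$.

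It remains to prove the free-probability identity $\rho=\kappa\delta_0+(1-\kappa)D_{1-\kappa}\big(\nu^{\boxplus\frac{1}{1-\kappa}}\big)$, for then $\frac{1}{1-\kappa}(\rho-\kappa\delta_0)=D_{1-\kappa}\big(\nu^{\boxplus\frac{1}{1-\kappa}}\big)$ and, after undoing the translation, the proof is complete. The clean route is free compression: in a tracial $W^*$-probability space $(\mathcal M,\tau)$ take $a\geq0$ with law $\nu$ and a projection $p$ free from $a$ with $\tau(p)=1-\kappa$, so that $p$ has law $\kappa\delta_0+(1-\kappa)\delta_1=\nu_{0,1;\kappa}$ and $pap=\sqrt p\,a\,\sqrt p$ has law $\nu\boxtimes\nu_{0,1;\kappa}=\rho$. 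As $pap$ vanishes on $\ker p$, its law in $(\mathcal M,\tau)$ splits as $\kappa\delta_0$ plus $(1-\kappa)$ times its law in the compressed space $(p\mathcal Mp,\tfrac{1}{1-\kappa}\tau)$; and the Nica--Speicher compression formula, which multiplies the $m$-th free cumulant by $(1-\kappa)^{m-1}$, identifies the latter law with $D_{1-\kappa}\big(\nu^{\boxplus\frac{1}{1-\kappa}}\big)$ (matching free cumulants: $c^{m}t=(1-\kappa)^{m-1}$ for all $m$ forces the dilation $c=1-\kappa$ and the power $t=\frac{1}{1-\kappa}$). See~\cite{nica_speicher_book}.

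The main obstacle is that this compression argument is transparent only for compactly supported $\nu$, where free cumulants are directly available, whereas the corollary permits non-compact support (and an atom of $\nu$ at $0$, in which case both sides of the identity carry the common mass $\max\big(0,\frac{\nu(\{0\})-\kappa}{1-\kappa}\big)$ at $0$). I would close this gap by truncating $\nu$ to $[0,N]$, applying the compactly supported identity, and letting $N\to\infty$, invoking continuity of $\boxtimes$ and of the free additive semigroup $t\mapsto\nu^{\boxplus t}$, $t\ge1$, under weak convergence, as recorded in the free-probability preliminaries. Alternatively, the identity can be verified analytically through $S$-transforms, reducing it to the known transform law relating free multiplicative convolution with a Bernoulli law to the dilated free convolution power.
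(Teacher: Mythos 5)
Your argument is correct and its skeleton coincides with the paper's proof: reduce by a shift to nonnegative roots, apply Theorem~\ref{theo:repeated_diff_general} with $a=0$, $b=1$ together with $\nu_{0,1;\kappa}=\kappa\delta_0+(1-\kappa)\delta_1$, pass from $\lsem z^{\ell}(\dd/\dd z)^{\ell}Q_n\rsem_{n}$ to $\lsem(\dd/\dd z)^{\ell}Q_n\rsem_{n-\ell}$ by subtracting the mass $\tfrac{\ell}{n}\delta_0$ and rescaling by $\tfrac{n}{n-\ell}$, and finish with the identity $\nu\boxtimes(\kappa\delta_0+(1-\kappa)\delta_1)=\kappa\delta_0+(1-\kappa)\big(\nu^{\boxplus\frac{1}{1-\kappa}}\big)\big(\tfrac{\cdot}{1-\kappa}\big)$. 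The only genuine divergence is your treatment of this last identity: the paper does not prove it but quotes it, for arbitrary distributions on $\R$, in the equivalent form $\nu\boxtimes\big(\kappa\delta_0+(1-\kappa)\delta_{\frac{1}{1-\kappa}}\big)=\kappa\delta_0+(1-\kappa)\nu^{\boxplus\frac{1}{1-\kappa}}$ from \cite[Eq.~(14.13)]{nica_speicher_book}, whereas you reprove it via free compression and then extend from compactly supported $\nu$ by truncation. Two caveats about that extension. First, the continuity of $\boxtimes$ and of $\nu\mapsto\nu^{\boxplus t}$ with respect to weak convergence is \emph{not} recorded anywhere in this paper's preliminaries; you would have to import it from the literature (e.g.\ \cite{bercovici_voiculescu}), so your alternative route through $S$-transforms is preferable, since it stays within the toolkit the paper actually develops (Lemma~\ref{lem:r_and_s_transform_vs_profile}, Proposition~\ref{prop:free_multiplicative_convol_existence}). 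Second, it is safer to shift so that the roots are bounded below by a \emph{positive} constant, as the paper does, rather than by $0$: in the degenerate case $\nu=\delta_A$ your normalization gives $\tilde\nu=\delta_0$, for which the nonempty-intersection hypothesis underlying Theorem~\ref{theo:finite_free_mult_conv} --- and hence the proof of Theorem~\ref{theo:repeated_diff_general} --- breaks down, so this case would need a separate (trivial, via Gauss--Lucas) argument.
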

\begin{proof}
Since differentiation commutes with shifts along the real line and so does the map $\nu\mapsto \left(\nu^{\boxplus\frac{1}{1-\kappa}}\right)\big(\tfrac{\cdot}{1-\kappa}\big)$, we can assume that $A>0$. From Theorem~\ref{theo:repeated_diff_general} applied with $a=0, b=1$ we conclude that
\begin{equation}\label{eq:rep_diff_corollary_proof1}
\left\lsem z^{\ell}(\tfrac{\dd}{\dd z})^{\ell} Q_n\right\rsem_{n}\toweak \nu\boxtimes \nu_{0,1;\kappa}=\nu\boxtimes (\kappa\delta_0+(1-\kappa)\delta_1).
\end{equation}
Exactly as in the proof of Theorem~3.7 in~\cite{arizmendi}, we use a known connection between the additive free self-convolution and multiplicative free convolution, see~\cite[Equation (14.13)]{nica_speicher_book}. For any distribution $\nu$ on $\mathbb R$ and any $\kappa\in[0,1)$ it holds
\begin{equation}\label{eq:free_additive_convolution_flow_def}
\nu\boxtimes\big(\kappa\delta_0+(1-\kappa)\delta_{\frac{1}{1-\kappa}}\big)=\kappa\delta_0+(1-\kappa)\nu^{\boxplus \frac{1}{1-\kappa}}.
\end{equation}
Thus,~\eqref{eq:rep_diff_corollary_proof1}  implies
\begin{equation}\label{eq:rep_diff_corollary_proof2}
\left\lsem z^{\ell}(\tfrac{\dd}{\dd z})^{\ell} Q_n\right\rsem_{n}\toweak \kappa\delta_0+(1-\kappa)\nu^{\boxplus \frac{1}{1-\kappa}}\left(\frac{\cdot}{1-\kappa}\right).
\end{equation}
Observe that
$$
\lsem (\tfrac{\dd}{\dd z})^{\ell} Q_n\rsem_{n-\ell}=\frac{n}{n-\ell}\left(\left\lsem z^{\ell}(\tfrac{\dd}{\dd z})^{\ell} Q_n\right\rsem_{n}-\frac{\ell}{n}\delta_0\right).
$$
The claim now follows from~\eqref{eq:rep_diff_corollary_proof2} by sending $n,\ell\to\infty$.
\end{proof}

\begin{remark}
As $\kappa\uparrow 1$ and assuming that $\mu$ is centered, we have the free Central Limit Theorem $\left(\mu^{\boxplus\frac{1}{1-\kappa}}\right)\big(\tfrac{\cdot}{\sqrt{1-\kappa}}\big)\toweak \mathsf{sc}_1$ weakly, where $\mathsf{sc}_1$ is Wigner's semicircle distribution, that is the limiting zero distribution of Hermite polynomials $\mathrm{He}_n$ as $n\to\infty$. It has recently been shown in \cite[Theorem 2.8]{COR24} that the polynomials $(\tfrac{\dd}{\dd z})^{n-d} Q_n$ converge to $\mathrm{He}_d$ (after rescaling and for $n\to\infty, d\in\N$ fixed). Similar to related results, the proof of~\cite{COR24} relies on free cumulants. However, a close inspection of \cite[Lemma 2.2]{COR24} reveals that quotients of large degree coefficients determine the first few moments, just as derivatives of the profile $g$ near $\alpha=1$ do: Moments can be represented by derivatives of the moment generating function and hence in terms of $g$, see~\eqref{eq:summary_psi} below, for instance the first moment is given by $m_1=\frac 1 {tg''(g'^{\leftarrow}(\log t))}\big|_{t=0}$.
\end{remark}

Motivated by~\cite[Section~6]{bogvad_etal} and~\cite[Section~4.2]{HK21}, and the Rodrigues formula for the Legendre polynomials, we analyze the zeros of the repeated derivatives of the polynomial $z^m (1-z)^m$.
\begin{corollary}
If  $m,\ell\to\infty$ such that $\ell/(2m)\to \kappa \in (0,1)$, then
$$
\lsem   (\tfrac{\dd}{\dd z})^{\ell} \left(z^{m} (1-z)^{m}\right) \rsem_{2m-\ell} \overset{w}{\underset{m,\ell\to\infty}\longrightarrow}
\frac{1}{1-\kappa} \cdot  \left(\left(\frac 12 \delta_0 + \frac 12 \delta_1\right)\boxtimes (\kappa \delta_0 + (1-\kappa)\delta_1) - \kappa \delta_0\right).
$$
\end{corollary}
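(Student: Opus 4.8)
The plan is to realize this as a specialization of Theorem~\ref{theo:repeated_diff_general} with $a=0$, $b=1$, followed by the same atom-extraction bookkeeping used in the proof of Corollary~\ref{cor:repeated_diff}. First I would set $n:=2m$ and $Q_{2m}(z):=z^m(1-z)^m$, a polynomial of degree exactly $2m$ whose only zeros are $0$ and $1$, each of multiplicity $m$. Consequently $\lsem Q_{2m}\rsem_{2m}=\tfrac12\delta_0+\tfrac12\delta_1=:\nu$ identically in $m$, so a fortiori $\lsem Q_{2m}\rsem_{2m}\toweak\nu$, a probability measure on $[0,\infty)$, and all roots are nonnegative. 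The hypothesis $\ell/(2m)\to\kappa$ is precisely $\ell/n\to\kappa$, and since $a-b=-1$ the admissibility condition $1+(a-b)\kappa=1-\kappa>0$ holds exactly because $\kappa\in(0,1)$.

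With these identifications, Theorem~\ref{theo:repeated_diff_general} (with $a=0$, $b=1$, so that $z^{-\ell(a-b)}=z^{\ell}$ and $z^a(\dd/\dd z)^b=\dd/\dd z$) gives
\begin{equation*}
\lsem z^{\ell}(\tfrac{\dd}{\dd z})^{\ell}Q_{2m}\rsem_{2m}\toweak\nu\boxtimes\nu_{0,1;\kappa},
\end{equation*}
while Theorem~\ref{thm:q_n_m_measures_converge} identifies $\nu_{0,1;\kappa}=\kappa\delta_0+(1-\kappa)\delta_1$, so the right-hand side equals $\bigl(\tfrac12\delta_0+\tfrac12\delta_1\bigr)\boxtimes(\kappa\delta_0+(1-\kappa)\delta_1)$. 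It then remains to pass from $z^{\ell}(\tfrac{\dd}{\dd z})^{\ell}Q_{2m}$ to $(\tfrac{\dd}{\dd z})^{\ell}Q_{2m}$ itself. Since the latter has degree $2m-\ell$ and multiplying by $z^{\ell}$ merely appends $\ell$ extra roots at the origin, one has the exact identity
\begin{equation*}
\lsem z^{\ell}(\tfrac{\dd}{\dd z})^{\ell}Q_{2m}\rsem_{2m}=\frac{\ell}{2m}\delta_0+\frac{2m-\ell}{2m}\,\lsem(\tfrac{\dd}{\dd z})^{\ell}Q_{2m}\rsem_{2m-\ell},
\end{equation*}
which I would solve for $\lsem(\tfrac{\dd}{\dd z})^{\ell}Q_{2m}\rsem_{2m-\ell}$ and then let $m,\ell\to\infty$, using $\tfrac{2m}{2m-\ell}\to\tfrac{1}{1-\kappa}$ and $\tfrac{\ell}{2m}\to\kappa$. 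This reproduces, word for word, the concluding step in the proof of Corollary~\ref{cor:repeated_diff} and yields the asserted formula.

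The one point that deserves care is the legitimacy of subtracting the atom $\kappa\delta_0$ and rescaling by $\tfrac{1}{1-\kappa}$ in the weak limit. Since every $\lsem(\tfrac{\dd}{\dd z})^{\ell}Q_{2m}\rsem_{2m-\ell}$ is a genuine probability measure, the affine combination above converges weakly to $\tfrac{1}{1-\kappa}\bigl(\nu\boxtimes(\kappa\delta_0+(1-\kappa)\delta_1)-\kappa\delta_0\bigr)$, and I must check this is again a probability measure. The pre-limit measure $\lsem z^{\ell}(\tfrac{\dd}{\dd z})^{\ell}Q_{2m}\rsem_{2m}$ has an atom at $0$ of mass at least $\ell/(2m)$ just from the factor $z^{\ell}$, so by the portmanteau inequality for the closed set $\{0\}$ the weak limit $\nu\boxtimes(\kappa\delta_0+(1-\kappa)\delta_1)$ carries an atom of mass at least $\kappa$ there; hence removing $\kappa\delta_0$ leaves a nonnegative measure of total mass $1-\kappa$, which the prefactor $\tfrac{1}{1-\kappa}$ renormalizes to a probability measure. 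As a consistency check one can rewrite the answer in free-additive form: using the dilation relation $\nu\boxtimes D_c\sigma=D_c(\nu\boxtimes\sigma)$ (where $D_c$ denotes dilation by $c>0$) together with~\eqref{eq:free_additive_convolution_flow_def}, the stated measure equals $\bigl(\nu^{\boxplus\frac{1}{1-\kappa}}\bigr)\bigl(\tfrac{\cdot}{1-\kappa}\bigr)$, exactly what Corollary~\ref{cor:repeated_diff} predicts for this $\nu$; this reconciliation, though not logically needed, is where the $\boxtimes$ and $\boxplus$ descriptions are seen to coincide.
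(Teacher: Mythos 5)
Your proposal is correct and follows essentially the same route as the paper: apply Theorem~\ref{theo:repeated_diff_general} with $a=0$, $b=1$ to $Q_{2m}(z)=z^m(1-z)^m$, then use the exact identity $\lsem z^{\ell}(\tfrac{\dd}{\dd z})^{\ell}Q_{2m}\rsem_{2m}=\tfrac{\ell}{2m}\delta_0+\tfrac{2m-\ell}{2m}\lsem(\tfrac{\dd}{\dd z})^{\ell}Q_{2m}\rsem_{2m-\ell}$ and pass to the limit. The only cosmetic difference is that you justify the subtraction of $\kappa\delta_0$ via the portmanteau inequality for the closed set $\{0\}$, whereas the paper cites \cite[Lemma~6.9]{bercovici_voiculescu} to bound the atom of the limit at $0$; both serve the same purpose, and your version is self-contained.
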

\begin{proof}
Theorem~\ref{theo:repeated_diff_general} with $n=2m$, $Q_n(z) = z^m (1-z)^m$, $a=0$, $b=1$ yields
$$
\lsem z^{\ell}  (\tfrac{\dd}{\dd z})^{\ell} \left(z^{m} (1-z)^{m}\right) \rsem_{2m}\toweak \left(\frac 12 \delta_0 + \frac 12 \delta_1\right)\boxtimes (\kappa \delta_0 + (1-\kappa)\delta_1).
$$
This measure has an atom of size $\max (\frac 12, \kappa)$ at $0$ by~\cite[Lemma~6.9]{bercovici_voiculescu}. Next we observe that
$$
\lsem   (\tfrac{\dd}{\dd z})^{\ell} \left(z^{m} (1-z)^{m}\right) \rsem_{2m-\ell}
=
\frac {2m}{2m-\ell} \left(\lsem z^{\ell}  (\tfrac{\dd}{\dd z})^{\ell} \left(z^{m} (1-z)^{m}\right) \rsem_{2m} -\frac \ell{2m} \delta_0\right).
$$
The proof is completed by letting $m,\ell\to\infty$.
\end{proof}

\begin{remark}
\citet[Section~2]{lubinski_sidi_composite_biorthogonal} consider several families of polynomials admitting a Rodrigues-type representation of the form $(\tfrac{\dd} {\dd z})^\ell Q_n(z)$ or $(z\tfrac{\dd} {\dd z})^\ell Q_n(z)$, where the limiting zero distribution of $Q_n$ is easy to determine. For all these examples, the limiting distribution of zeros can be identified using Theorem~\ref{theo:repeated_diff_general}.
\end{remark}

An analogue of Theorem \ref{theo:repeated_diff_general} for random polynomials with \emph{complex} isotropic limit distribution of zeros as in~\cite{KZ14} has been obtained in~\cite{COR23,feng_yao,diff-paper}.  Repeated differentiation of trigonometric polynomials has been studied in~\cite{kabluchko_rep_diff_free_poi}.

\subsection{Repeated differentiation of polynomials with equally spaced roots}
Consider the polynomials $p_n(z):=z(z-\frac 1 n)\cdots(z-\frac{n-1}n)$, $n\in \N$,  with zeros uniformly spaced  in $[0,1]$. Figure~\ref{fig:zeroes_repeated_diff_unif_interval} shows zeros of $p_n, p_n',p_n'',\ldots, p_n^{(n-1)}$ for $n=50$. One may ask for the limiting  distribution of these zeros as $n\to\infty$. Despite the simple nature of the problem, the solution turns out to be non-trivial and is new to the best of our knowledge.
\begin{figure}[t]
	\centering
	\includegraphics[width=0.6\columnwidth]{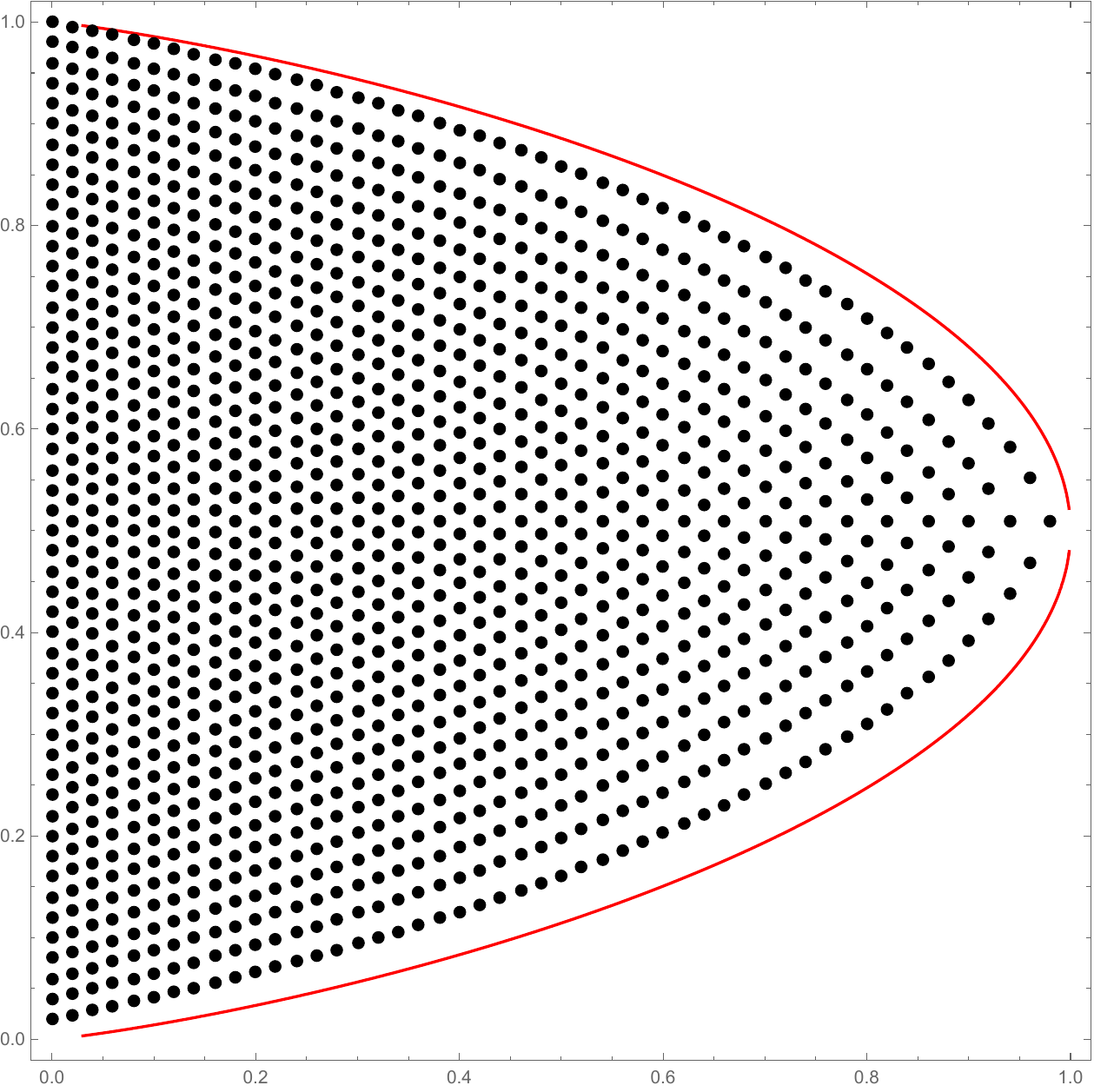}
	\caption{Roots of repeated derivatives of $p_n(z) =  \prod_{j=0}^{n-1} \left(z - \frac j n\right)$ with $n=50$. The black dots have coordinates $(\frac \ell n, y_{i,\ell})$, where $\ell\in \{0,1,\ldots, n\}$ is the order of differentiation and $y_{1,\ell},\ldots, y_{n-\ell, \ell}\in [0,1]$ are the roots of $p_n^{(\ell)}$. The red curves show the support of the probability measure $\unif_{[0,1]} \boxtimes {\nu}_{0,1;\kappa}$ as a function of $\kappa\in (0,1)$.}
\label{fig:zeroes_repeated_diff_unif_interval}
\end{figure}
According to Theorem~\ref{theo:repeated_diff_general} with $a=0$, $b=1$, for every fixed $\kappa\in (0,1)$,
$$
\lsem (\tfrac{\dd}{\dd z})^{\lfloor \kappa n\rfloor} p_n(z)\rsem_n\toweak \unif_{[0,1]} \boxtimes {\nu}_{0,1;\kappa},
$$
where $\unif_{[0,1]}$ is the uniform distribution on $[0,1]$.
It will be convenient to map the zeros from the interval $[0,1]$ to the interval $[-1,1]$. Namely, if $\xi_{\kappa}$ is a random variable with the distribution $\unif_{[0,1]}\boxtimes {\nu}_{0,1;\kappa}$, then
\begin{equation}\label{eq:stieltjes_polys_repeated_diff}
\lsem (\tfrac{\dd}{\dd z})^{\lfloor \kappa n\rfloor} p_n((z+1)/2)\rsem_n\toweak \mu_{\kappa},
\end{equation}
where $\mu_{\kappa}$ is the distribution of $2\xi_{\kappa}-1$. In the next result we characterize the limiting measure $\mu_{\kappa}$. A proof will be given in Section~\ref{sec:proof_repeated}.
\begin{proposition}\label{eq:prop_rep_diff_stiel}
For $\kappa \in (0,1)$ define $Y_{\kappa}(t)=\coth(t)-\kappa/t$ and let $z_{\kappa}$ be the unique positive solution of
\begin{equation}\label{eq:sinh_equation}
\sinh(z)=z/\sqrt{\kappa}.
\end{equation}
The measure $\mu_{\kappa}$ in~\eqref{eq:stieltjes_polys_repeated_diff} is supported by $[Y_{\kappa}(-z_{\kappa}),Y_{\kappa}(z_{\kappa})]$ and has the Cauchy transform
\begin{align*}
G_{\mu_{\kappa}}(z)=Y^{\leftarrow}_{\kappa}(z),\quad z\in \R\backslash [Y_{\kappa}(-z_{\kappa}),Y_{\kappa}(z_{\kappa})].
\end{align*}
\end{proposition}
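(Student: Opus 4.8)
The plan is to identify the measure underlying $\mu_\kappa$ with a dilated free additive self-convolution power of the symmetric uniform law $\unif_{[-1,1]}$ and then read off its Cauchy transform from the $R$-transform. Writing $\mathrm{D}_s$ for the dilation sending a law to the law of its $s$-multiple, I would first rewrite $\xi_\kappa\sim\unif_{[0,1]}\boxtimes\nu_{0,1;\kappa}$ with $\nu_{0,1;\kappa}=\kappa\delta_0+(1-\kappa)\delta_1$. Using $\kappa\delta_0+(1-\kappa)\delta_1=\mathrm{D}_{1-\kappa}\big(\kappa\delta_0+(1-\kappa)\delta_{1/(1-\kappa)}\big)$, the equivariance $\rho\boxtimes \mathrm{D}_s\sigma=\mathrm{D}_s(\rho\boxtimes\sigma)$, and the identity~\eqref{eq:free_additive_convolution_flow_def} already used in the proof of Corollary~\ref{cor:repeated_diff}, one obtains $\xi_\kappa=\kappa\delta_0+(1-\kappa)\mathrm{D}_{1-\kappa}\big(\unif_{[0,1]}^{\boxplus\frac1{1-\kappa}}\big)$. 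Translating the zeros from $[0,1]$ to $[-1,1]$ via $x\mapsto 2x-1$ and using that $\unif_{[0,1]}$ is $\mathrm{D}_{1/2}\unif_{[-1,1]}$ translated by $\tfrac12$, the affine image of the absolutely continuous part becomes $\mathrm{D}_{1-\kappa}V$ with $V:=\unif_{[-1,1]}^{\boxplus\frac1{1-\kappa}}$; keeping track of the mass $\kappa$ that escapes because $\deg p_n^{(\ell)}=n-\ell$, the finite-zero limit is $\mu_\kappa=(1-\kappa)\mathrm{D}_{1-\kappa}V$, whose Cauchy transform simplifies to $G_{\mu_\kappa}(z)=G_V\big(z/(1-\kappa)\big)$.

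The heart of the computation is the $R$-transform of $\unif_{[-1,1]}$. Since $G_{\unif_{[-1,1]}}(z)=\tfrac12\log\tfrac{z+1}{z-1}$, inverting gives the $K$-function $K_{\unif_{[-1,1]}}(s)=G^{\leftarrow}(s)=\coth(s)$, hence $R_{\unif_{[-1,1]}}(s)=\coth(s)-\tfrac1s$. By additivity of the $R$-transform under $\boxplus$, the $\frac1{1-\kappa}$-fold self-convolution $V$ has $K_V(s)=\tfrac1{1-\kappa}R_{\unif_{[-1,1]}}(s)+\tfrac1s=\tfrac1{1-\kappa}\big(\coth(s)-\tfrac{\kappa}{s}\big)=\tfrac{Y_\kappa(s)}{1-\kappa}$. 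Substituting $G=G_{\mu_\kappa}(z)=G_V\big(z/(1-\kappa)\big)$ into $z/(1-\kappa)=K_V(G)=Y_\kappa(G)/(1-\kappa)$ yields exactly the claimed relation $z=Y_\kappa(G_{\mu_\kappa}(z))$, i.e.\ $G_{\mu_\kappa}=Y_\kappa^{\leftarrow}$.

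For the support I would use that the spectral edges of a compactly supported free convolution power are the critical values of its $K$-function. Here $K_V'(s)=0$ iff $Y_\kappa'(s)=-\operatorname{csch}^2 s+\kappa/s^2=0$, i.e.\ $\sinh^2 s=s^2/\kappa$; for $s>0$ this is $\sinh s=s/\sqrt\kappa$, which is~\eqref{eq:sinh_equation}. Since $\kappa\in(0,1)$ gives $1/\sqrt\kappa>1$, the function $s\mapsto\sinh s-s/\sqrt\kappa$ is negative for small $s>0$ and tends to $+\infty$, and its convexity on $(0,\infty)$ forces a unique positive root $z_\kappa$; by the oddness of $Y_\kappa$ the two critical points are $\pm z_\kappa$. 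As $Y_\kappa$ maps $(0,z_\kappa)$ decreasingly onto $(Y_\kappa(z_\kappa),+\infty)$ and $(-z_\kappa,0)$ decreasingly onto $(-\infty,Y_\kappa(-z_\kappa))$, with $Y_\kappa(-z_\kappa)=-Y_\kappa(z_\kappa)$, the branch of $Y_\kappa^{\leftarrow}$ taking values in $(-z_\kappa,z_\kappa)$ is a bijection onto $\R\setminus[Y_\kappa(-z_\kappa),Y_\kappa(z_\kappa)]$ with $G_{\mu_\kappa}(z)\to 0$ as $z\to\pm\infty$; this is the correct branch of the Cauchy transform, and the complementary interval is the support, where the two real branches collide and $G$ acquires an imaginary part. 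Finally $\mathrm{D}_{1-\kappa}$ rescales $\operatorname{supp}V=[K_V(-z_\kappa),K_V(z_\kappa)]$ by $1-\kappa$, recovering $[Y_\kappa(-z_\kappa),Y_\kappa(z_\kappa)]$.

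The main obstacle is the careful bookkeeping of normalizations: the degree drop $\deg p_n^{(\ell)}=n-\ell$ means $\mu_\kappa$ carries total mass $1-\kappa$ (the remaining $\kappa$ sitting at infinity, equivalently absorbed by the spurious factor $z^\ell$ in~\eqref{eq:diff_as_finite_free_mult_convolution}), and it is precisely the cancellation of the $(1-\kappa)$ factors in $G_{\mu_\kappa}(z)=(1-\kappa)G_{\mathrm{D}_{1-\kappa}V}(z)=G_V\big(z/(1-\kappa)\big)$ that produces the clean equation $z=Y_\kappa(G)$ rather than $z=Y_\kappa((1-\kappa)G)$. The second delicate point is to justify rigorously, rather than merely formally, that $Y_\kappa^{\leftarrow}$ on the selected branch is the Cauchy transform and that the support is exactly $[Y_\kappa(-z_\kappa),Y_\kappa(z_\kappa)]$; this rests on the existence and compact support of $\unif_{[-1,1]}^{\boxplus c}$ for $c=\tfrac1{1-\kappa}\ge 1$ and on identifying its edges with the critical values of $K_V$, which can be obtained either from the general subordination theory of free convolution or, self-containedly, by checking that the chosen branch is a Nevanlinna function and that Stieltjes inversion yields a nonnegative density of total mass $1-\kappa$.
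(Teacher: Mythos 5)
Your proof is correct, and it takes a genuinely different route from the paper's. The paper stays inside its own profile machinery: it starts from the Stirling polynomials $S_n(x)=x(x+1)\cdots(x+n-1)$, imports the Moser--Wyman asymptotics of the Stirling numbers of the first kind to obtain the profile $g_S$, computes by hand the profile $g_{S;\kappa}$ of $(\dd/\dd z)^{\lfloor\kappa n\rfloor}S_n(nz)$, and then inverts $\alpha\mapsto\eee^{-g_{S;\kappa}'(\alpha)}$ via Theorem~\ref{theo:exp_profile_implies_zeros} to land on $G_{\mu_\kappa}=Y_\kappa^{\leftarrow}$. You instead identify the limit measure purely free-probabilistically: combining~\eqref{eq:free_additive_convolution_flow_def} with dilation equivariance of $\boxtimes$ gives $\mu_\kappa=(1-\kappa)\,\mathrm{D}_{1-\kappa}\bigl(\unif_{[-1,1]}^{\boxplus 1/(1-\kappa)}\bigr)$ plus mass $\kappa$ at infinity, after which everything reduces to $R_{\unif_{[-1,1]}}(s)=\coth(s)-1/s$ and additivity of the $R$-transform; this explains conceptually why $\coth$ appears (it is $G_{\unif_{[-1,1]}}^{\leftarrow}$), whereas in the paper it emerges from the algebra of the Stirling profile. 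Your bookkeeping of the lost mass is not only correct but more careful than the paper's own wording: since $\lim_{z\to+\infty}zY_\kappa^{\leftarrow}(z)=1-\kappa$ and the relevant branch of $Y_\kappa^{\leftarrow}$ has no pole at $-1$, the proposition's $\mu_\kappa$ must carry mass $\kappa$ at infinity rather than the atom at $-1$ that the literal law of $2\xi_\kappa-1$ would have, exactly as you observe. The trade-offs: the paper's route needs nothing beyond its Theorems~\ref{theo:exp_profile_implies_zeros} and~\ref{theo:zeros_imply_exp_profile} and classical Stirling asymptotics, and it showcases the method the paper is advertising; your route is shorter and more conceptual, but it imports the existence and compact support of the fractional free convolution power $\unif_{[-1,1]}^{\boxplus c}$, $c\geq 1$, and, for the support claim, the identification of spectral edges with critical values of $K_V$ --- a point you rightly flag as the one needing extra justification (subordination theory, or the Nevanlinna-function-plus-Stieltjes-inversion argument you sketch). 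This is not a real weakness relative to the paper, whose own support argument (that $G_{\mu_\kappa}$ cannot be analytically continued past the critical values $Y_\kappa(\pm z_\kappa)$) is kept at the same level of brevity.
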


Functions similar to $Y^{\leftarrow}_{\kappa}(z)$ arose for example in~\cite[Section~2.2.6]{mezoe_keady} (the inverse Langevin function), \cite[Section~7.2.2]{mezo_book_lambert_function}, or in the context of the Curie-Weiss model~\cite[Equation~(2.12)]{friedli_velenik_book}. It seems that $Y^{\leftarrow}_{\kappa}$ can be expressed through the function $\zeta$  studied in~\cite[Section~4]{kabluchko_rep_diff_free_poi}, but we shall not do it here.

\section{Proof of Theorems~\ref{theo:exp_profile_implies_zeros} and~\ref{theo:zeros_imply_exp_profile}}\label{sec:proof_main1}
The goal of this section is to prove Theorem \ref{theo:exp_profile_implies_zeros} and Theorem \ref{theo:zeros_imply_exp_profile}. To this end, we shall view an arbitrary polynomial $P_n(x) = \sum_{k=0}^n a_{k:n} x^k$ with nonpositive roots $-\lambda_{k:n}$ and after dividing by the positive constant $P_n(1)$ as the generating function of the sum of mutually independent Bernoulli random variables with parameters $1/(1+\lambda_{k:n})$. Hence,  asymptotic behaviour of such sums becomes crucially relevant and thus we begin with the following result of independent interest.

\subsection{Large deviations for sums of independent Bernoulli random variables}\label{subsec:large_deviations}
Let
$$
p_{k:n}\in [0,1],\quad 1\leq k\leq n,\quad n\in \N,
$$
be a collection of probabilities and $(B_{k:n})_{n\in \N, 1\leq k\leq n}$ be a triangular array of Bernoulli random variables defined on some underlying probability space $(\Omega,\mathcal{F},\mathbb{P})$ and such that:
\begin{itemize}
\item for each $n\in\mathbb{N}$, the random variables  $B_{1:n},\ldots,B_{n:n}$ are mutually independent;
\item for each $n\in\mathbb{N}$ and $1\leq k\leq n$, $\mathbb{P}[B_{k:n}=1]=p_{k:n}=1-\mathbb{P}[B_{k:n}=0]$.
\end{itemize}
Consider the random variable $S_{n}:=\sum_{k=1}^{n}B_{k:n}$. It follows from the Lindeberg central limit theorem that $(S_n-\E S_n)/\sqrt{\Var S_n}$ converges weakly to the standard normal distribution assuming only that $\Var S_n\to\infty$ as $n\to\infty$. This result has multiple applications in combinatorics; see~\cite{canfield_asymptotic_normality_survey,harper,pitman_probab_bounds}. The next theorem is a precise large deviation principle for $S_n$.

\begin{theorem}\label{thm:main_probabilistic_direct}
Assume that, as $n\to\infty$, probability measures $M_n:=\frac{1}{n}\sum_{k=1}^{n}\delta_{p_{k:n}}$ converge weakly to some probability measure $M_{\infty}$ on $[0,1]$. Let
$$
\underline{m}:=M_{\infty}(\{1\})\leq 1-M_{\infty}(\{0\})=:\overline{m}.
$$
Suppose that $\underline{m} < \overline{m}$ and $\eps>0$ is sufficiently small.
Then, there exists an infinitely differentiable strictly concave function $g:(\underline{m},\overline{m})\to(-\infty,0]$ such that
\begin{equation}\label{eq:uniform_convergence_of_profiles}
\sup_{(\underline{m}+\varepsilon)n\leq k\leq (\overline{m}-\varepsilon)n}\left|\frac{1}{n}\log\mathbb{P}[S_n=k]-g\left(\frac {k}{n}\right)\right|\ton 0.
\end{equation}
Moreover,
\begin{equation}\label{eq:divergence_outside}
\sup_{0\leq k\leq (\underline{m}-\varepsilon) n}\frac{1}{n}\log\mathbb{P}[S_n=k]\ton
-\infty
\quad\text{and}\quad
\sup_{(\overline{m}+\varepsilon) n\leq k\leq n}\frac{1}{n}\log\mathbb{P}[S_n=k]\ton -\infty.
\end{equation}
The function $-g$ is the Legendre transform of a strictly convex function $u\mapsto \Psi(\eee^u)$, $u\in\mathbb{R}$, where
\begin{equation}\label{eq:psi_def}
\Psi(t):=\int_{[0,1]}\log (1-y+yt)M_{\infty}({\rm d}y),\quad t>0.
\end{equation}
More precisely,
\begin{equation}\label{eq:phi_def}
g(\alpha)=-\sup_{u\in\mathbb{R}}(\alpha u-\Psi(\eee^u))=\inf_{u\in\mathbb{R}}(\Psi(\eee^u)-u\alpha)=\Psi(\Phi^{\leftarrow}(\alpha))-\alpha\log\Phi^{\leftarrow}(\alpha),\quad \alpha\in (\underline{m},\overline{m}),
\end{equation}
where
\begin{equation}\label{eq:psi_func_def}
\Phi(t):=t\Psi^{\prime}(t)=t\int_{[0,\,1]} \frac{y M_{\infty}({\rm d}y)}{1-y+yt},\quad t>0,
\end{equation}
and $\Phi^{\leftarrow}$ denotes the inverse function of $\Phi$. Furthermore, the following identity holds for all $\alpha\in (\underline{m},\overline{m})$,
\begin{equation}\label{eq:phi_Phi_inversion}
\eee^{-g'(\alpha)}= \Phi^{\leftarrow}(\alpha),
\end{equation}
the map $\alpha\mapsto g(\alpha)+\alpha\log\alpha$ is strictly concave and
\begin{equation}\label{eq:phi_derivatives}
\lim_{\alpha\to\underline{m}+}g'(\alpha)=+\infty\quad\text{and}\quad\lim_{\alpha\to \overline{m}-}g'(\alpha)=-\infty.
\end{equation}
Writing $p_* := \int_{[0,1]} y M_\infty(\dd y)=\Phi(1)$ we have $\underline{m} < p_* < \overline{m}$ and $g(p_*) = 0$.
Also,
\begin{equation}\label{eq:P_s_n_k_quotients}
\sup_{(\underline{m}+\varepsilon)n\leq k\leq (\overline{m}-\varepsilon)n}\left|\frac{\mathbb{P}[S_n=k+1]}{\mathbb{P}[S_n=k]} -\eee^{g'\left(\frac {k}{n}\right)}\right|\ton 0.
\end{equation}
Moreover, writing $\underline{r}_n := \sum_{k=0}^n \1[p_{k:n}=1]$ and $\overline{r}_n:= n-\sum_{k=0}^n \1[p_{k:n}=0]$, so that the support of $S_n$ is equal to $\{\underline{r}_n, \ldots, \overline{r}_n\}$, we have
\begin{equation}\label{eq:P_s_n_k_quotients_infinity}
\inf_{\underline{r}_n\leq k\leq (\underline{m}-\varepsilon) n} \frac{\mathbb{P}[S_n=k+1]}{\mathbb{P}[S_n=k]}
\ton +\infty
\quad\text{and}\quad
\inf_{(\overline{m}+\varepsilon)n\leq k < \overline{r}_n} \frac{\mathbb{P}[S_n=k]}{\mathbb{P}[S_n=k+1]}
\ton +\infty.
\end{equation}
Finally, if $\underline{m}=\overline{m}$, then~\eqref{eq:divergence_outside} and~\eqref{eq:P_s_n_k_quotients_infinity} hold true, while~\eqref{eq:uniform_convergence_of_profiles} and~\eqref{eq:P_s_n_k_quotients} become void statements.
\end{theorem}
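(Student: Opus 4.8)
The plan is to prove everything through the standard sharp large deviation scheme: an exponential change of measure (tilting) that turns $\{S_n=k\}$ into a central event for a tilted law, followed by a local central limit theorem (LCLT) controlling the tilted probability at its own mean. The purely analytic assertions about $g$, $\Psi$ and $\Phi$ will then be extracted by elementary calculus and convex duality. First I would record the generating-function identities. Since $\E[t^{S_n}]=\prod_{k=1}^n(1-p_{k:n}+p_{k:n}t)$, we get $\frac1n\log\E[t^{S_n}]=\int_{[0,1]}\log(1-y+yt)M_n(\dd y)\to\Psi(t)$ for $t>0$, and uniformly on compact subsets of $(0,\infty)$ because the family $t\mapsto\int\log(1-y+yt)M_n(\dd y)$ is equi-Lipschitz there. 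Differentiating under the integral gives $\Phi(t)=t\Psi'(t)$ as in \eqref{eq:psi_func_def}, $\Phi'(t)=\int\frac{y(1-y)}{(1-y+yt)^2}M_\infty(\dd y)$ and $\Psi''(t)<0$. When $\underline m<\overline m$ the measure $M_\infty$ charges $(0,1)$, so $\Phi'>0$ and $\Phi$ is a strictly increasing continuous bijection of $(0,\infty)$ onto $(\underline m,\overline m)$ (endpoints by dominated convergence as $t\to0+,\infty$). Writing $t=\Phi^{\leftarrow}(\alpha)$ in $g(\alpha)=\Psi(t)-\alpha\log t$ and differentiating yields $g'(\alpha)=-\log\Phi^{\leftarrow}(\alpha)$, which is \eqref{eq:phi_Phi_inversion}; since $\Phi^{\leftarrow}$ increases, $g'$ strictly decreases (strict concavity) with boundary behaviour \eqref{eq:phi_derivatives}, while $\frac{\dd}{\dd\alpha}(g(\alpha)+\alpha\log\alpha)=\log\Psi'(\Phi^{\leftarrow}(\alpha))+1$ decreases because $\Psi'$ does. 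Convexity of $u\mapsto\Psi(\eee^u)$ follows from $\frac{\dd}{\dd u}\Psi(\eee^u)=\Phi(\eee^u)$ being increasing, and a direct Legendre computation gives $\sup_u(\alpha u-\Psi(\eee^u))=-g(\alpha)$, i.e. \eqref{eq:phi_def}. Finally $\Phi(1)=\int y\,M_\infty(\dd y)=p_*$, $g(p_*)=\Psi(1)=0$, and $g'(p_*)=0$, so $p_*$ is the maximizer and $g\le 0$.

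Second, I would set up the tilting. For a strictly interior support point $\underline r_n<k<\overline r_n$, let $t_{k,n}$ be the unique solution of $\Phi_n(t):=\int\frac{yt}{1-y+yt}M_n(\dd y)=k/n$, which exists and is unique since $\Phi_n$ is a strictly increasing bijection onto $(\underline r_n/n,\overline r_n/n)$. Using the exact finite-$n$ tilt rather than $\Phi^{\leftarrow}(k/n)$ is deliberate: it makes $k$ \emph{exactly} the mean of $S_n$ under the tilted law $\mathbb P_{t_{k,n}}$, keeping us in the central regime and avoiding moderate-deviation corrections. The change-of-measure identity reads $\frac1n\log\mathbb P[S_n=k]=-\frac kn\log t_{k,n}+\int\log(1-y+y t_{k,n})M_n(\dd y)+\frac1n\log\mathbb P_{t_{k,n}}[S_n=k]$. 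Since $\Phi_n\to\Phi$ uniformly on compacts, for $k/n\in[\underline m+\eps,\overline m-\eps]$ the tilts $t_{k,n}$ lie in a fixed compact subset of $(0,\infty)$ and converge to $\Phi^{\leftarrow}(k/n)$; the first two terms then converge uniformly to $g(k/n)$, so \eqref{eq:uniform_convergence_of_profiles} reduces to proving $\frac1n\log\mathbb P_{t_{k,n}}[S_n=k]\to0$ uniformly.

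Third, and this is the main obstacle, I would establish a local central limit theorem for the triangular array of tilted Bernoullis, uniform over the compact family of tilts. With $\sigma_n^2(t)=\Var_{t}(S_n)$ one computes $\frac1n\sigma_n^2(t)\to t\Phi'(t)$, which is bounded away from $0$ and $\infty$ for $t$ in the compact range, so $\sigma_n^2(t)\asymp n\to\infty$ uniformly. I would then invert the characteristic function $\prod_j(1-\tilde p_{j}+\tilde p_{j}\eee^{\ii\theta})$: away from $\theta=0$ its modulus is at most $\exp(-c\,\sigma_n^2(t)\,\theta^2)$, so that range is negligible, and a Taylor expansion near $\theta=0$ produces the Gaussian term, giving $\mathbb P_{t}[S_n=k]=(1+o(1))/\sqrt{2\pi\sigma_n^2(t)}$ at $k=\E_t S_n$. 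The uniformity of all constants follows from compactness of the tilt range together with the trivial bound that all absolute third moments are at most $1$. This yields $\frac1n\log\mathbb P_{t_{k,n}}[S_n=k]\to0$ and completes \eqref{eq:uniform_convergence_of_profiles}.

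Fourth, I would dispatch the remaining claims. For \eqref{eq:divergence_outside}, the Portmanteau theorem shows that for large $n$ at least $(\underline m-\eps/2)n$ of the $p_{k:n}$ exceed $1-\delta$; the event $\{S_n\le(\underline m-\eps)n\}$ forces at least $(\eps/2)n$ of these near-sure variables to vanish, and a crude binomial Chernoff bound makes this at most $\eee^{c(\delta)n}$ with $c(\delta)\to-\infty$ as $\delta\to0$, giving the limit $-\infty$; the upper tail is symmetric via the $p_{k:n}$ near $0$. The ratio statement \eqref{eq:P_s_n_k_quotients} I would derive from \eqref{eq:uniform_convergence_of_profiles} together with log-concavity of $(\mathbb P[S_n=k])_k$ — valid since this is a P\'olya frequency sequence, being the coefficient sequence of a polynomial with only real nonpositive roots — so the increments $\log\frac{\mathbb P[S_n=k+1]}{\mathbb P[S_n=k]}$ are non-increasing in $k$; sandwiching a monotone difference quotient of $\frac1n\log\mathbb P[S_n=\cdot]$ between consecutive increments and invoking differentiability of $g$ forces these increments to $g'(k/n)$ uniformly. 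For \eqref{eq:P_s_n_k_quotients_infinity}, monotonicity of the ratios combined with $g'(\underline m+)=+\infty$ (choose $\alpha_0\downarrow\underline m$ with $g'(\alpha_0)$ arbitrarily large and use monotonicity below $\alpha_0 n$) gives the first limit, and symmetrically the second. In the degenerate case $\underline m=\overline m$ the statements \eqref{eq:uniform_convergence_of_profiles} and \eqref{eq:P_s_n_k_quotients} are vacuous, while \eqref{eq:divergence_outside} and \eqref{eq:P_s_n_k_quotients_infinity} follow from the very same Chernoff and log-concavity arguments, now using that $S_n$ concentrates at $p_* n=\underline m n$.
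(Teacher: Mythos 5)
Your proposal is correct in substance and follows the same master plan as the paper's proof: exponential tilting chosen so that $k$ is \emph{exactly} the tilted mean (i.e. solving $\Phi_n(t_{k,n})=k/n$), a local CLT for the tilted sum giving $\widetilde{\mathbb P}_{t_{k,n}}[S_n=k]\sim(2\pi\sigma_n^2)^{-1/2}$, convex duality for the analytic statements about $g$, $\Psi$, $\Phi$, and Newton log-concavity of $(\mathbb P[S_n=k])_k$ for the ratio statements. Within this skeleton you deviate in three sub-arguments, all legitimately. First, where the paper simply cites Bender's local limit theorem for sums of independent indicators, you prove the LCLT by Fourier inversion; your key estimates (the bound $|1-p+p\eee^{\ii\theta}|^2=1-2p(1-p)(1-\cos\theta)$ giving $\exp(-c\sigma_n^2\theta^2)$ away from the origin, and a Taylor expansion near the origin with uniformity coming from compactness of the tilt range) are standard and correct, so your write-up is self-contained where the paper is not. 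Second, for \eqref{eq:divergence_outside} the paper uses Markov's inequality with $\eee^{\pm\lambda S_n}$ and the limits $\lambda^{-1}\Psi(\eee^{\mp\lambda})\to\mp\underline m,\overline m$; your Portmanteau-plus-union-bound argument (at least $(\underline m-\eps/2)n$ of the $p_{k:n}$ exceed $1-\delta$, so the lower-tail event forces $(\eps/2)n$ failures of probability $<\delta$ each, cost $\leq 2^n\delta^{\eps n/2}$) reaches the same conclusion by elementary counting. Third, for \eqref{eq:P_s_n_k_quotients} the paper reads the ratio off directly from the tilting identity and the LCLT, namely $\mathbb P[S_n=k+1]/\mathbb P[S_n=k]=\theta_*^{-1}(1+o(1))$, which is shorter; your route through \eqref{eq:uniform_convergence_of_profiles} plus monotonicity of the increments (sandwiching $\log\frac{\mathbb P[S_n=k+1]}{\mathbb P[S_n=k]}$ between difference quotients of $g$ over windows of width $\eta n$, then letting $\eta\to0$ after $n\to\infty$) also works, but it is the one place where the uniformity bookkeeping (the $o(1)/\eta$ error terms and the boundary shrinkage from $\eps$ to $\eps/2$) must be done carefully; as sketched it is complete enough to be believed.

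The only genuine thin spot is the degenerate case $\underline m=\overline m$. There \eqref{eq:P_s_n_k_quotients} is void, so your non-degenerate derivation of \eqref{eq:P_s_n_k_quotients_infinity} (``ratio near $\alpha_0 n$ is close to $\eee^{g'(\alpha_0)}$, then use monotonicity'') has nothing to lean on, and ``the very same log-concavity argument'' is not literally available. The missing ingredient, which the paper supplies explicitly, is a pigeonhole step: some $k_*(n)$ in the window $[(\underline m-\eps)n,(\overline m+\eps)n]$ carries probability at least of order $1/n$ (by \eqref{eq:divergence_outside} and the fact that total mass is $1$), and then telescoping the at most $n$ monotone ratios from $\lfloor(\underline m-\eps)n\rfloor$ up to $k_*(n)$ against the superexponential bound $\eee^{-A_n n}$ at the window edge forces the edge ratio, and hence all ratios below it, to diverge. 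Your phrase ``using that $S_n$ concentrates at $\underline m n$'' points at exactly this, so the gap is one of exposition rather than of idea, but as written that step is not a proof.
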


The converse statement is also true in the following form.

\begin{theorem}\label{thm:main_probabilistic_converse}
Assume that there exists a nonempty interval $(\underline{m},\overline{m})\subseteq [0,1]$ and a function $g:(\underline{m},\overline{m})\to (-\infty,0]$ such that
\begin{equation}\label{eq:main_probabilistic_converse_assumption}
\lim_{n\to\infty}\frac{1}{n}\log \mathbb{P}[S_n=\lfloor \alpha n\rfloor]=
\begin{cases}
g(\alpha),&\text{if }\alpha\in (\underline{m},\overline{m}),\\
-\infty,&\text{if }\alpha\in [0,\underline{m})\cup (\overline{m},1].
\end{cases}
\end{equation}
Then, as $n\to\infty$, probability measures $M_n:=\frac{1}{n}\sum_{k=1}^{n}\delta_{p_{k:n}}$ converge weakly to a probability measure $M_{\infty}$ on $[0,1]$ such that
$$
M_{\infty}(\{1\})=\underline{m}\quad\text{and}\quad M_{\infty}(\{0\})=1-\overline{m}.
$$
Furthermore,~\eqref{eq:uniform_convergence_of_profiles}-\eqref{eq:P_s_n_k_quotients_infinity} hold true.
\end{theorem}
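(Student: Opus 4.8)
The plan is to reduce everything to the direct statement, Theorem~\ref{thm:main_probabilistic_direct}, which is already available at this point, by first establishing the weak convergence $M_n \toweak M_\infty$ and only afterwards reading off the remaining conclusions. The two structural facts I would exploit are that each $M_n$ lives on the compact space $[0,1]$, so the sequence is automatically tight, and that the profile $g$ determines the limiting measure uniquely. Thus the argument splits into a soft compactness step and a hard injectivity step.

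First I would invoke Prokhorov's theorem: since every $M_n$ is supported on $[0,1]$, the sequence $(M_n)$ is relatively compact for weak convergence. Let $M'$ be the weak limit along some subsequence $(n_j)$, and apply Theorem~\ref{thm:main_probabilistic_direct} to the subsequence $(S_{n_j})$ with limiting measure $M'$. This produces a profile $\tilde g$ on the interval $(M'(\{1\}), 1 - M'(\{0\}))$, realized as the limit $\frac{1}{n_j}\log\mathbb{P}[S_{n_j} = \lfloor \alpha n_j\rfloor] \to \tilde g(\alpha)$, together with the divergence~\eqref{eq:divergence_outside} to $-\infty$ outside that interval. Comparing this with the hypothesis~\eqref{eq:main_probabilistic_converse_assumption}, which prescribes the same limit to be $g$ on $(\underline m, \overline m)$ and $-\infty$ elsewhere, the finiteness-versus-divergence dichotomy forces the two intervals to coincide, so that $M'(\{1\}) = \underline m$, $M'(\{0\}) = 1 - \overline m$, and $\tilde g = g$ on $(\underline m, \overline m)$.

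The heart of the proof is to show that $g$ pins down $M'$ uniquely, so that all subsequential limits agree and $M_n \toweak M_\infty$ with the asserted atom sizes. By Theorem~\ref{thm:main_probabilistic_direct}, $-g$ is the Legendre transform of the strictly convex function $u \mapsto \Psi_{M'}(\eee^u)$, where $\Psi_{M'}(t) = \int_{[0,1]} \log(1 - y + yt)\,M'(\dd y)$ as in~\eqref{eq:psi_def}. Since Legendre duality is involutive on strictly convex functions, $\Psi_{M'}(\eee^u) = \sup_{\alpha}(\alpha u + g(\alpha))$ is recovered from $g$ alone, hence is the same for every subsequential limit. It then remains to recover $M'$ from $\Psi_{M'}$. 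Differentiating and substituting $y = 1/(1-z)$, i.e.\ transporting $M'$ by the homeomorphism $T : [0,1] \to [-\infty, 0]$, $T(y) = -(1-y)/y$ with $T(0) = -\infty$ and $T(1) = 0$, turns $\Psi_{M'}'(t) = \int_{[0,1]} \frac{y}{1-y+yt}\,M'(\dd y)$ into the Cauchy transform $G_{T_* M'}(t) = \int_{[-\infty,0]} \frac{(T_*M')(\dd z)}{t - z}$ for $t > 0$. A Cauchy transform on $(0,\infty)$ determines a measure on $[-\infty, 0]$ — its restriction to $(-\infty,0]$ by Stieltjes--Perron inversion and its atom at $-\infty$ through~\eqref{eq:atom_at_infinity_over_R} — so $T_*M'$, and hence $M' = T^{-1}_*(T_* M')$, is uniquely determined by $g$. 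Consequently every subsequence of $(M_n)$ has the same weak limit $M_\infty$, which proves $M_n \toweak M_\infty$ with $M_\infty(\{1\}) = \underline m$ and $M_\infty(\{0\}) = 1 - \overline m$.

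With $M_n \toweak M_\infty$ in hand, I would simply apply Theorem~\ref{thm:main_probabilistic_direct} once more, now to the full sequence, to obtain all of~\eqref{eq:uniform_convergence_of_profiles}--\eqref{eq:P_s_n_k_quotients_infinity}, completing the proof. I expect the injectivity step — that the profile $g$ determines $M'$ — to be the main obstacle; everything else is either soft (tightness and subsequences) or a direct citation of the already-established direct theorem. The points requiring genuine care there are the justification that the Legendre inversion applies (which hinges on the strict convexity of $u \mapsto \Psi_{M'}(\eee^u)$ guaranteed by Theorem~\ref{thm:main_probabilistic_direct}) and that the Cauchy-transform inversion correctly accounts for the possible atoms of $T_*M'$ at the endpoints $0$ and $-\infty$, corresponding to $M'(\{1\})$ and $M'(\{0\})$.
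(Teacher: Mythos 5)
Your proposal is correct and follows essentially the same route as the paper's proof: tightness of $(M_n)$ on $[0,1]$, application of Theorem~\ref{thm:main_probabilistic_direct} along subsequences, identification of the interval and profile with $(\underline{m},\overline{m})$ and $g$ via the finiteness/divergence dichotomy in~\eqref{eq:main_probabilistic_converse_assumption}, recovery of $\Psi$ from $g$ by the involutivity of the Legendre transform on strictly convex functions, and finally uniqueness of the measure from $\Psi'$ via Cauchy-transform uniqueness. Your explicit change of variables $y=1/(1-z)$ just spells out what the paper cites as ``the uniqueness theorem for Cauchy transforms,'' so the two arguments coincide in substance.
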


Note that we \emph{do} require $\underline{m}<\overline{m}$ in the above theorem.  The example when $S_n = n-1$ a.s.\ for even $n$ and $S_n =1$ a.s.\  for odd $n$ shows that, in general,  $\frac 1n \log \mathbb{P}[S_n=\lfloor \alpha n\rfloor]\to -\infty$ for all $\alpha\in [0,1]$   does not imply that $M_n$ converges weakly.
The next result complements Theorem~\ref{thm:main_probabilistic_converse}.
\begin{proposition}
Suppose that $\lim_{n\to\infty}\frac{1}{n}\log \mathbb{P}[S_n=\lfloor \alpha n\rfloor] = -\infty$ for all $\alpha$ in some dense subset of $[0,1]$. Then, all subsequential weak limits of $M_n$ are concentrated on $\{0,1\}$.
\end{proposition}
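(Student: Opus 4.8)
The plan is to reduce the statement to the direct large deviation principle already established in Theorem~\ref{thm:main_probabilistic_direct} and argue by contradiction. First I would take a subsequence $(n_j)$ along which $M_{n_j}$ converges weakly to the given subsequential limit $M_\infty$, a probability measure on $[0,1]$. Assume, toward a contradiction, that $M_\infty$ is \emph{not} concentrated on $\{0,1\}$, i.e.\ $M_\infty((0,1))>0$.

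Next I would set $\underline{m}:=M_\infty(\{1\})$ and $\overline{m}:=1-M_\infty(\{0\})$, exactly as in Theorem~\ref{thm:main_probabilistic_direct}. Then
\[
\overline{m}-\underline{m}=1-M_\infty(\{0\})-M_\infty(\{1\})=M_\infty((0,1))>0,
\]
so the crucial hypothesis $\underline{m}<\overline{m}$ of Theorem~\ref{thm:main_probabilistic_direct} holds. Applying that theorem along the subsequence $(n_j)$ (its assumptions are stated purely in terms of weak convergence of $M_n$, so passing to a subsequence is legitimate) yields an infinitely differentiable, strictly concave function $g:(\underline{m},\overline{m})\to(-\infty,0]$ satisfying the uniform estimate \eqref{eq:uniform_convergence_of_profiles}. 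The key point is that $g$ takes only \emph{finite} values on the whole open interval $(\underline{m},\overline{m})$.

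From \eqref{eq:uniform_convergence_of_profiles} I would then extract a pointwise limit: fixing $\alpha\in(\underline{m},\overline{m})$ and choosing $\varepsilon>0$ with $\underline{m}+\varepsilon<\alpha<\overline{m}-\varepsilon$, the index $k=\lfloor\alpha n\rfloor$ eventually lies in the admissible range, $k/n\to\alpha$, and continuity of $g$ gives
\[
\lim_{j\to\infty}\frac{1}{n_j}\log\mathbb{P}[S_{n_j}=\lfloor\alpha n_j\rfloor]=g(\alpha)\in(-\infty,0].
\]
Since $(\underline{m},\overline{m})$ is a nonempty open subinterval of $[0,1]$, the given dense set meets it, so I may choose $\alpha$ lying in both. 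For this $\alpha$ the global hypothesis forces $\tfrac1n\log\mathbb{P}[S_n=\lfloor\alpha n\rfloor]\to-\infty$ along the full sequence, hence also along $(n_j)$, which contradicts the finite value $g(\alpha)$. This contradiction shows $M_\infty((0,1))=0$, i.e.\ every subsequential weak limit is concentrated on $\{0,1\}$.

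I expect no substantial obstacle: the entire analytic content is packaged in Theorem~\ref{thm:main_probabilistic_direct}, and the deduction is essentially immediate. The only two points demanding a word of care are the admissibility of invoking that theorem along a subsequence, and the elementary topological fact that a dense subset of $[0,1]$ must intersect the nonempty open interval $(\underline{m},\overline{m})$; the whole argument hinges on the clash between the finiteness of $g$ on that interval and the $-\infty$ behaviour imposed by the hypothesis.
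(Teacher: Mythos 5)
Your proof is correct and is essentially the paper's own argument: the paper likewise argues by contradiction, invoking Theorem~\ref{thm:main_probabilistic_direct} along the convergent subsequence to produce a finite limit of $\frac 1n \log \mathbb{P}[S_n=\lfloor \alpha n\rfloor]$ on a nonempty open interval, which clashes with the $-\infty$ hypothesis on the dense set. You have merely spelled out the details (the identity $\overline{m}-\underline{m}=M_\infty((0,1))$, the legitimacy of passing to a subsequence, and the density argument) that the paper leaves implicit.
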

\begin{proof}
If some subsequential weak limit of $M_n$ were not concentrated on $\{0,1\}$, then Theorem~\ref{thm:main_probabilistic_direct} would imply that $\frac{1}{n}\log \mathbb{P}[S_n=\lfloor \alpha n\rfloor]$ has a finite subsequential limit for all $\alpha$ in a nonempty interval. This is a contradiction.
\end{proof}

\begin{proof}[Proof of Theorem~\ref{thm:main_probabilistic_direct}]
We first consider the case $\underline{m}<\overline{m}$. Put $\varphi_n(t):=\mathbb{E}[t^{S_n}]=\prod_{k=1}^{n}(1-p_{k:n}+p_{k:n}t)$, $t\in\mathbb{C}$, and introduce a new probability measure $\widetilde{\mathbb{P}}_{\theta}$ (with the corresponding expectation $\widetilde{\mathbb{E}}_{\theta}$) by an exponential change of measure via the identity
\begin{equation}\label{eq:exponential_tilting}
\widetilde{\mathbb{E}}_{\theta}[f(B_{1:n},\ldots,B_{n:n})]=\frac{\mathbb{E}[f(B_{1:n},\ldots,B_{n:n})\theta^{S_n}]}{\varphi_n(\theta)},\quad \theta>0,
\end{equation}
for any bounded measurable $f:\mathbb{R}^n\to \mathbb{R}$. Note that under $\widetilde{\mathbb{P}}_{\theta}$ the variables $B_{1:n},\ldots,B_{n:n}$ are  still mutually independent and, by putting $f(x_1,\ldots,x_n)=\1[x_j=1]$, we conclude that
\begin{equation}\label{eq:exponential_tilting_probabilities}
\widetilde{\mathbb{P}}_{\theta}[B_{j:n}=1]=\frac{\theta p_{j:n}}{1-p_{j:n}+p_{j:n}\theta},\quad j=1,\ldots,n.
\end{equation}
Substitution $f(x_1,\ldots,x_n)=\1[x_1+\cdots+x_n=k]$ into~\eqref{eq:exponential_tilting} results in
\begin{equation}\label{eq:exponential_tilting_probabilities1}
\mathbb{P}[S_n=k]=\varphi_n(\theta)\theta^{-k}\widetilde{\mathbb{P}}_{\theta}[S_n=k]
\end{equation}
and thereupon
\begin{equation}\label{eq:p_vs_p_theta}
\frac{1}{n}\log \mathbb{P}[S_n=k]=-\frac{k}{n}\log \theta +\frac{\log \varphi_n(\theta)}{n}+\frac{1}{n}\log\widetilde{\mathbb{P}}_{\theta}[S_n=k].
\end{equation}

We are going to pick $\theta_{\ast}=\theta_{\ast}(k,n)$ such that $\widetilde{\mathbb{E}}_{\theta_{\ast}}[S_n]= k$ which, as we shall see, results in a subexponential decay of $\widetilde{\mathbb{P}}_{\theta_{\ast}}[S_n=k]$.  By choosing $f(x_1,\ldots,x_n)=x_1+\cdots+x_n$ in~\eqref{eq:exponential_tilting} we obtain the equation
\begin{equation*}
\widetilde{\mathbb{E}}_{\theta_{\ast}}[S_n]=\frac{\mathbb{E}[S_n\theta_{\ast}^{S_n}]}{\varphi_n(\theta_{\ast})}=k~\Longleftrightarrow~\frac{\theta_{\ast} \varphi^{\prime}_n(\theta_{\ast})}{n \varphi_n(\theta_{\ast})}=\frac{k}{n},
\end{equation*}
which in terms of the measure $M_n$ takes the form $\Phi_n(\theta_{\ast})=k/n$, where
\begin{equation}\label{eq:theta_choice}
\Phi_n(t):=t\int_{[0,\,1]}\frac{yM_n({\rm d}y)}{1-y+yt},\quad t>0.
\end{equation}
We shall first argue that the equation $\Phi_n(\theta_{\ast})=k/n$ has a unique solution for all $k\in\mathbb{N}$ such that $(\underline{m}+\varepsilon)n\leq k\leq (\overline{m}-\varepsilon)n$, where $n$ is sufficiently large. Note that the assumption that $M_n$ converges weakly to $M_{\infty}$ implies that $\Phi_n(t)$ defined in~\eqref{eq:theta_choice} converges to $\Phi(t)$ for every fixed $t>0$ because $y\mapsto ty/(1-y+yt)$ is bounded and continuous on $[0,1]$. The function $\Phi_n$ is strictly increasing and continuous on $(0,+\infty)$ for each $n\in\mathbb{N}$ and the same holds for the limit function $\Phi$. Thus, by Dini's theorem, the convergence is locally uniform on $(0,\infty)$, that is
\begin{equation}\label{eq:local_uniform1}
\lim_{n\to\infty}\sup_{t\in K}|\Phi_n(t)-\Phi(t)|=0,
\end{equation}
for every compact set $K\subset (0,+\infty)$. The function $\Phi$ satisfies
\begin{equation}\label{eq:M_infty_range}
\lim_{t\to 0+}\Phi(t)=M_{\infty}(\{1\})=\underline{m},\quad \lim_{t\to +\infty}\Phi(t)=M_{\infty}((0,1])=\overline{m},
\end{equation}
and therefore possesses a strictly increasing continuous inverse $\Phi^{\leftarrow}:(\underline{m},\overline{m})\to (0,+\infty)$. By strict monotonicity and continuity of all our functions, we also have
\begin{equation}\label{eq:local_uniform2}
\lim_{n\to\infty}\sup_{z\in K_1}|\Phi_n^{\leftarrow}(z)-\Phi^{\leftarrow}(z)|=0,
\end{equation}
for every compact subset $K_1\subset (\underline{m},\overline{m})$. Therefore, for $(\underline{m}+\varepsilon) n\leq k\leq (\overline{m}-\varepsilon)n$ and $n$ large enough, $\theta_{\ast}(k,n)=\Phi_n^{\leftarrow}(k/n)$ is well-defined,
\begin{equation}\label{eq:local_uniform3}
\lim_{n\to\infty}\sup_{(\underline{m}+\varepsilon) n\leq k\leq (\overline{m}-\varepsilon)n}|\theta_{\ast}(k,n)-\Phi^{\leftarrow}(k/n)|=0
\end{equation}
and
\begin{equation}\label{eq:local_uniform4}
\lim_{n\to\infty}\sup_{(\underline{m}+\varepsilon) n\leq k\leq (\overline{m}-\varepsilon)n}|\log \theta_{\ast}(k,n)-\log \Phi^{\leftarrow}(k/n)|=0.
\end{equation}
By another appeal to Dini's theorem, we conclude that the point-wise convergence
\begin{equation}\label{eq:log_potential_converge}
\frac{1}{n}\log \varphi_n(t)=\int_{[0,1]}\log (1-y+yt)M_n({\rm d}y)\ton\Psi(t),\quad t>0,
\end{equation}
can be improved to the locally uniform convergence on $(0,\infty)$. Together with~\eqref{eq:local_uniform3} this yields
$$
\lim_{n\to\infty}\sup_{(\underline{m}+\varepsilon) n\leq k\leq (\overline{m}-\varepsilon)n}\left|\frac{1}{n}\log \varphi_n(\theta_{\ast}(k,n))-\Psi(\Phi^{\leftarrow}(k/n))\right|=0.
$$
To finish the proof of~\eqref{eq:uniform_convergence_of_profiles} it remains to show that
\begin{equation*}
\sup_{(\underline{m}+\varepsilon) n\leq k\leq (\overline{m}-\varepsilon)n}\left|\frac{\log\widetilde{\mathbb{P}}_{\theta_{\ast}(n,k)}[S_n=k]}{n}\right|\ton 0.
\end{equation*}
We shall actually show more, namely,
\begin{equation}\label{eq:llt_uniform}
\sup_{(\underline{m}+\varepsilon) n\leq k\leq (\overline{m}-\varepsilon)n}\left|\frac{\log\widetilde{\mathbb{P}}_{\theta_{\ast}(n,k)}[S_n=k]}{c_n}\right|\ton 0,
\end{equation}
for any sequence $(c_n)$ such that $(\log n) / c_n\to 0$, as $n\to\infty$. To this end, we first put
\begin{equation}\label{eq:variance}
\sigma_n^2:=\widetilde{{\rm Var}}_{\theta_{\ast}}[S_n]=\sum_{j=1}^{n}\frac{\theta_{\ast} p_{j:n}(1-p_{j:n})}{(1-p_{j:n}+\theta_{\ast} p_{j:n})^2}=n\int_{[0,1]}\frac{\theta_{\ast}y(1-y)}{(1-y+y\theta_{\ast})^2}M_n({\rm d}y).
\end{equation}
Take any sequence $k=k(n)$ such that $(\underline{m}+\varepsilon) n\leq k\leq (\overline{m}-\varepsilon)n$. In view of~\eqref{eq:local_uniform3} the quantity $\theta_{\ast}=\theta_{\ast}(k,n)$ stays bounded away from zero and infinity as $n\to\infty$. Also note that $M_{\infty}(\{0\})+M_{\infty}(\{1\})<1$ implies $\int_{[0,1]}y(1-y)M_{\infty}({\rm d}y)>0$, whence $\sigma_n^2>0$. Thus,
\begin{equation}\label{eq:variance_linear}
0<\liminf_{n\to\infty}n^{-1}\sigma_n^2<\limsup_{n\to\infty}n^{-1}\sigma_n^2<\infty.
\end{equation}
According to Theorem 2 in~\cite{bender}, which is a local limit theorem for sums of independent indicators, applied to $P_n(x):=\widetilde{\mathbb{E}}_{\theta_{\ast}(n,k)}[x^{S_{n}}]=\varphi_{n}(\theta_{\ast}(n,k) x)/\varphi_{n}(\theta_{\ast}(n,k))$, we conclude that the divergence of the variances~\eqref{eq:variance_linear} implies
\begin{equation}\label{eq:llt_changed_measure}
\sup_{x\in \mathbb{R}}\left|\sigma_{n}\widetilde{\mathbb{P}}_{\theta_{\ast}(n,k)}[S_{n}=\lfloor \sigma_{n} x\rfloor+k]-\frac{1}{\sqrt{2\pi}}\eee^{-x^2/2}\right|\to 0,\quad n\to\infty.
\end{equation}
In particular, setting $x=0$ yields,
\begin{align}\label{eq:llt_changed_measure2}
\widetilde{\mathbb{P}}_{\theta_{\ast}(n,k)}[S_{n}=k]~\sim~\frac{1}{\sigma_{n}\sqrt{2\pi}},\quad n\to\infty.
\end{align}
Now~\eqref{eq:llt_uniform} easily follows by contradiction. Assuming the existence of integer sequences $(n_m)$ and $(k_m)$ diverging to infinity such that
\begin{equation*}
\underline{m}+\varepsilon \leq \frac{k_m}{n_m}\leq \overline{m}-\varepsilon\quad\text{and}\quad \limsup_{m\to\infty}\left|\frac{\log\widetilde{\mathbb{P}}_{\theta_{\ast}(n_m,k_m)}[S_{n_m}=k_m]}{c_{n_m}}\right|>0,
\end{equation*}
contradicts~\eqref{eq:llt_changed_measure2} in view of~\eqref{eq:variance_linear}.

This finishes the proof of~\eqref{eq:uniform_convergence_of_profiles} with $g$ given by the right-hand side of~\eqref{eq:phi_def}.

We shall now check the first relation in~\eqref{eq:divergence_outside}. Note that, for every $\lambda>0$, by Markov's inequality
\begin{multline*}
\sup_{0\leq k\leq (\underline{m}-\varepsilon)n}\frac{\log \mathbb{P}[S_n=k]}{n}\leq \frac{\log \mathbb{P}[S_n\leq (\underline{m}-\varepsilon)n]}{n}=\frac{\log \mathbb{P}[\exp{(-\lambda S_n)}\geq \exp{(-\lambda(\underline{m}-\varepsilon)n)}]}{n}\\
\leq \frac{\log \varphi_n(\eee^{-\lambda})}{n}+\lambda(\underline{m}-\varepsilon).
\end{multline*}
Sending $n\to\infty$ and using~\eqref{eq:log_potential_converge}, imply
$$
\limsup_{n\to\infty}\sup_{0\leq k\leq (\underline{m}-\varepsilon)n}\frac{\log \mathbb{P}[S_n=k]}{n}\leq \Psi(\eee^{-\lambda})+\lambda(\underline{m}-\varepsilon)=\lambda(\lambda^{-1}\Psi(\eee^{-\lambda})+\underline{m}-\varepsilon).
$$
Since $\lambda>0$ can be taken arbitrarily large, the right-hand side can be made smaller than any large negative constant because
$$
\lim_{\lambda\to+\infty}\lambda^{-1}\Psi(\eee^{-\lambda})=-\underline{m},
$$
which in turn is a consequence of the Lebesgue dominated convergence theorem and the fact that
$$
\lim_{\lambda\to+\infty}\frac{\log(1-y+y\eee^{-\lambda})}{\lambda}=
\begin{cases}
0,& y\in [0,1),\\
-1,& y=1.
\end{cases}
$$
This proves the first relation in~\eqref{eq:divergence_outside}. The second one can be proved similarly using the inequality
\begin{multline*}
\sup_{(\overline{m}+\varepsilon)n\leq k\leq n}\frac{\log \mathbb{P}[S_n=k]}{n}\leq \frac{\log \mathbb{P}[S_n\geq (\overline{m}+\varepsilon)n]}{n}=\frac{\log \mathbb{P}[\exp{(\lambda S_n)}\geq \exp{(\lambda(\overline{m}+\varepsilon)n)}]}{n}\\
\leq \frac{\log \varphi_n(\eee^{\lambda})}{n}-\lambda(\overline{m}+\varepsilon)
\end{multline*}
and the fact that
$$
\lim_{\lambda\to+\infty}\lambda^{-1}\Psi(\eee^{\lambda})=\overline{m},
$$
The function $\Psi$ is infinitely differentiable on $t>0$ and so is $\Phi$. The function $\Phi^{\leftarrow}$ is infinitely differentiable on $(\underline{m},\overline{m})$, since $\Phi^{\prime}$ does not vanish. The function $u\mapsto \Psi(\eee^u)$ is strictly convex on $\mathbb{R}$, since
\begin{equation}\label{eq:psi_sec_derivative}
(\Psi(\eee^u))^{\prime\prime}=\int_{[0,1]}\frac{(1-y)y \eee^u}{(1-y+y \eee^u)^2}M_{\infty}({\rm d}y)>0,\quad u\in\mathbb{R},
\end{equation}
where the strict inequality follows from $M_{\infty}(\{0\})+M_{\infty}(\{1\})<1$. The strict concavity of $g$ follows from the fact that $-g$ is the Legendre transform of an infinitely differentiable strictly convex function $u\mapsto \Psi(\eee^u)$.

To obtain~\eqref{eq:phi_Phi_inversion} replace in~\eqref{eq:phi_def} $\alpha$ by $\Phi(t)$ and differentiate both sides to get $g'(\Phi(t))=-\log t$, $t>0$. This relation also implies that $\alpha\mapsto g(\alpha)+\alpha\log(\alpha)$ is concave. Indeed, if suffices to show that the derivative $\alpha\mapsto g'(\alpha)+\log \alpha + 1$ is decreasing or, replacing $\alpha$ by a strictly increasing function $t\mapsto \Phi(t)$, that $t\mapsto g'(\Phi(t))+\log \Phi(t)$ is decreasing. But this follows from the fact that $\Psi'$ is decreasing and the equality $g'(\Phi(t))+\log \Phi(t)=\log (\Phi(t)/t)=\log\Psi'(t)$.

The claims in~\eqref{eq:phi_derivatives} follow from~\eqref{eq:phi_Phi_inversion} because
$$
\lim_{\alpha\to \underline{m}+0}\Phi^{\leftarrow}(\alpha)=0\quad\text{and}\quad\lim_{\alpha\to \overline{m}-0}\Phi^{\leftarrow}(\alpha)=+\infty,
$$
see~\eqref{eq:M_infty_range}. The fact that $p_{*}=\Phi(1)\in (\underline{m},\overline{m})$ follows from~\eqref{eq:M_infty_range} and $g(p_{\ast})=0$ is a consequence of $\Phi^{\leftarrow}(p_{*})=1$, $\Psi(1)=0$ and the second equality in~\eqref{eq:phi_def}. In order to check~\eqref{eq:P_s_n_k_quotients}, note that~\eqref{eq:exponential_tilting_probabilities1} (applied with $\theta=\theta_{\ast}(n,k)$) and~\eqref{eq:llt_changed_measure} yield
$$
\frac{\mathbb{P}[S_n=k+1]}{\mathbb{P}[S_n=k]}=\frac{1}{\theta_{\ast}(n,k)}(1+o(1))\overset{\eqref{eq:local_uniform3}}{=}\frac{1}{\Phi^{\leftarrow}(k/n)}(1+o(1))\overset{\eqref{eq:phi_Phi_inversion}}{=}\eee^{g'(k/n)}(1+o(1)),
$$
where constants in $o(1)$ are uniform in the range $(\underline{m}+\varepsilon)n\leq k\leq (\overline{m}-\varepsilon)n$. For the proof of~\eqref{eq:P_s_n_k_quotients_infinity} we exploit a known fact that $(\mathbb{P}[S_n=k])_{\underline{r}_n\leq k\leq\overline{r}_n}$ is a log-concave sequence for every fixed $n\in\mathbb{N}$. This follows from Newton's inequality as explained in~\cite{Lieb:1968}, formula (3). By~\eqref{eq:phi_derivatives} and~\eqref{eq:P_s_n_k_quotients} the ratio $\frac{\mathbb{P}[S_n=k+1]}{\mathbb{P}[S_n=k]}$ can be made arbitrarily large as $k/n$ approaches $\underline{m}$ from the right. By log-concavity this ratio is monotone and thus even larger when $k/n\leq \underline{m}-\varepsilon$. This proves the first relation in~\eqref{eq:P_s_n_k_quotients_infinity}. Similar argument works for the second relation.

It remains to consider the case $\underline{m}=\overline{m}$, in which $M_{\infty}$ is supported by the two-point set $\{0,1\}$. In this case our proof of~\eqref{eq:divergence_outside} remains valid without any changes. For the proof of~\eqref{eq:P_s_n_k_quotients_infinity} first of all note that by the pigeonhole principle, for every $n\in\mathbb{N}$, there exists $k_{\ast}(n)\in\{\underline{r}_n,\ldots,\overline{r}_n\}$ such that $\mathbb{P}[S_n=k_{\ast}(n)]\geq 1/(n+1)$. From~\eqref{eq:divergence_outside} we conclude that, for every $A>0$ and $\varepsilon>0$, there exists $n_0\in\mathbb{N}$ such that
$$
\mathbb{P}[S_n\leq (\underline{m}-\varepsilon)n]\leq \eee^{-An}\quad\text{and}\quad \mathbb{P}[S_n\geq (\overline{m}+\varepsilon)n]\leq \eee^{-An},\quad n\geq n_0.
$$
Therefore, $k_{\ast}(n)\in\{(\underline{m}-\varepsilon)n,\ldots,(\overline{m}+\varepsilon)n\}$, for $n\geq n_0$, and
\begin{multline*}
\frac{1}{n+1}\leq \mathbb{P}[S_n=k_{\ast}(n)]=\mathbb{P}[S_n= \lfloor (\underline{m}-\varepsilon)n\rfloor]\prod_{k=\lfloor (\underline{m}-\varepsilon)n\rfloor}^{k_{\ast}(n)-1}\frac{\mathbb{P}[S_n=k+1]}{\mathbb{P}[S_n=k]}\\
\leq \eee^{-An}\prod_{k=\lfloor (\underline{m}-\varepsilon)n\rfloor}^{k_{\ast}(n)-1}\left(\max\left(\frac{\mathbb{P}[S_n=k+1]}{\mathbb{P}[S_n=k]},1\right)\right)\leq \eee^{-An}\max\left(\frac{\mathbb{P}[S_n=\lfloor (\underline{m}-\varepsilon)n\rfloor+1]}{\mathbb{P}[S_n=\lfloor (\underline{m}-\varepsilon)n\rfloor]},1\right)^n,
\end{multline*}
where on the last step we used monotonicity of $(\frac{\mathbb{P}[S_n=k+1]}{\mathbb{P}[S_n=k]})_{\underline{r}_n\leq k\leq \overline{r}_n}$. Taking logarithms on both sides, dividing by $n$ and sending $n\to\infty$ yield
$$
\liminf_{n\to\infty}\frac{\mathbb{P}[S_n=\lfloor (\underline{m}-\varepsilon)n\rfloor+1]}{\mathbb{P}[S_n=\lfloor (\underline{m}-\varepsilon)n\rfloor]}\geq \eee^A.
$$
Since $A$ can be taken arbitrary large and $(\frac{\mathbb{P}[S_n=k+1]}{\mathbb{P}[S_n=k]})_{\underline{r}_n\leq k\leq \overline{r}_n}$ is monotone, the first relation in ~\eqref{eq:P_s_n_k_quotients_infinity} follows. The second one follows by the same argument.

 The proof of Theorem~\ref{thm:main_probabilistic_direct} is complete.
\end{proof}

\begin{proof}[Proof of Theorem~\ref{thm:main_probabilistic_converse}]
The sequence of probability measures $(M_n)_{n\in\mathbb{N}}$ is tight because they are supported by a compact set $[0,1]$. Suppose that there are two different subsequential limits of $(M_n)_{n\in\mathbb{N}}$, say $M_{\infty}^{(1)}$ and $M_{\infty}^{(2)}$, and
$$
\underline{m}_i:=M_{\infty}^{(i)}(\{1\}),\quad \overline{m}_i:=1-M_{\infty}^{(i)}(\{0\}),\quad i=1,2.
$$
Assume that $M_{\infty}^{(1)}$ (respectively, $M_{\infty}^{(2)}$) is a limit of
$(M_{n_{1,j}})_{j\in\mathbb{N}}$ (respectively, $(M_{n_{2,j}})_{j\in\mathbb{N}}$) as $j\to\infty$.
According to Theorem~\ref{thm:main_probabilistic_direct}, for $i=1,2$, there exist infinitely differentiable strictly concave functions $g_i:(\underline{m}_i,\overline{m}_i)\to(-\infty,0]$  such
that, for every $\varepsilon>0$,
\begin{equation}\label{eq:uniform_convergence_of_profiles12}
\lim_{j\to\infty}\sup_{(\underline{m}_i+\varepsilon)n_{i,j}\leq k\leq (\overline{m}_i-\varepsilon)n_{i,j}}\left|\frac{1}{n_{i,j}}\log\mathbb{P}[S_{n_{i,j}}=k]-g_i\left(\frac {k}{n_{i,j}}\right)\right|~=~0,
\end{equation}
and
\begin{equation}\label{eq:divergence_outside1}
\lim_{j\to\infty}\sup_{0\leq k\leq (\underline{m}_i-\varepsilon) n_{i,j}}\frac{1}{n_{i,j}}\log\mathbb{P}[S_{n_{i,j}}=k]=-\infty,
\end{equation}
and
\begin{equation}\label{eq:divergence_outside12}
\lim_{j\to\infty}\sup_{(\overline{m}_i+\varepsilon) n_{i,j}\leq k\leq n_{i,j}}\frac{1}{n_{i,j}}\log\mathbb{P}[S_{n_{i,j}}=k]= -\infty.
\end{equation}
Picking $k=k(n_{1,j})$ such that $k/n_{1,j}\to \alpha\in(0,1)$ we conclude using~\eqref{eq:main_probabilistic_converse_assumption} that $\underline{m}_1=\underline{m}$, $\overline{m}_1=\overline{m}$ and $g(\alpha)=g_1(\alpha)$, for $\alpha\in(\underline{m},\overline{m})$. In the same vein, picking $k=k(n_{2,j})$ such that $k/n_{2,j}\to \alpha\in(0,1)$ we conclude that $\underline{m}_2=\underline{m}$, $\overline{m}_2=\overline{m}$ and $g(\alpha)=g_2(\alpha)$, for $\alpha\in(\underline{m},\overline{m})$.

The functions $-g_1$ and $-g_2$ are strictly convex and infinitely differentiable on the common interval $(\underline{m},\overline{m})$ and are the Legendre transformations of $u\mapsto \Psi_1(\eee^u)$ and $u\mapsto \Psi_2(\eee^u)$, respectively, where
$$
\Psi_i(t)=\int_{[0,1]}\log(1-y+yt)M_{\infty}^{(i)}({\rm d}y),\quad t>0,\quad i=1,2.
$$
Since the Legendre transformation is an involution on the set of strictly convex functions, we conclude that $\Psi_{1}=\Psi_{2}$. Therefore, $\Psi^{\prime}_1=\Psi^{\prime}_2$ from which $M_{\infty}^{(1)}=M_{\infty}^{(2)}$ by the uniqueness theorem for Cauchy transforms. Thus, any two limit points for $(M_n)_{n\in\mathbb{N}}$ coincide thereby finishing the proof.
\end{proof}

\subsection{Proofs about zeros and profiles of polynomials}
We shall first derive Theorem~\ref{theo:zeros_imply_exp_profile} as a direct corollary of Theorem~\ref{thm:main_probabilistic_direct}.

\begin{proof}[Proof of Theorem \ref{theo:zeros_imply_exp_profile}]
Let $P_n(x) = \sum_{k=0}^n a_{k:n} x^k$ be a polynomial of degree at most $n$ with only nonpositive roots $(-\lambda_{k:n})_{1\leq k\leq \deg P_n}$. If $\deg P_n<n$ we stipulate $\lambda_{k:n}=+\infty$, for $\deg P_n<k\leq n$. Then
$$
\widehat{P}_n(x):=\frac{P_n(x)}{P_n(1)}=\prod_{k=1}^{\deg P_n}\frac{x+\lambda_{k:n}}{1+\lambda_{k:n}}=\prod_{k=1}^{n}\frac{x+\lambda_{k:n}}{1+\lambda_{k:n}}.
$$
is the generating function of the sum $\widehat{S}_n:=\widehat{B}_{1:n}+\cdots+\widehat{B}_{n:n}$,
where $(\widehat{B}_{k:n})_{1\leq k\leq n}$ are mutually independent Bernoulli random variables such that $\mathbb{P}[\widehat{B}_{k:n}=1]=1/(1+\lambda_{k:n})\in (0,1]$ for $k\leq \deg P_n$ and $\mathbb{P}[\widehat{B}_{k:n}=1]=0$, for $\deg P_n<k\leq n$.

Let $T:[-\infty,0]\to [0,1]$ be the mapping $T(x)=\frac{1}{1-x}$. Note the assumption
$$
\frac{1}{n}\sum_{k=1}^{n}\delta_{-\lambda_{k:n}}\toweak \mu
$$
is equivalent to
$$
\frac{1}{n}\sum_{k=1}^{n}\delta_{1/(1+\lambda_{k:n})}\toweak \mu\circ T^{\leftarrow}=:M_{\infty}.
$$
Moreover, $\mu(\{0\})=M_{\infty}(\{1\})$ and $\mu(\{-\infty\})=M_{\infty}(\{0\})$. Thus, Theorem~\ref{thm:main_probabilistic_direct} is applicable with $\mathbb{P}[\widehat{S}_n=k]=a_{k:n}/P_n(1)$ and yields the claim (a), that is,
$$
\sup_{(\underline{m}+\varepsilon) n\leq k \leq (\overline{m}-\varepsilon)n}\left|\frac{\log (a_{k :n}/P_n(1))}{n}-g\left(\frac{k }{n}\right)\right|\ton 0,
$$
where $g$ is given by~\eqref{eq:phi_def}. Part (b) follows from~\eqref{eq:phi_Phi_inversion} since $\Phi(t)=tG_{\mu}(t)$ and (c) follows from \eqref{eq:phi_derivatives}. Part (d) with $\Psi$ given by formula~\eqref{eq:psi_for_polys} follows from
$$
\Psi(t)=\int_{[0,1]}\log (1-y+yt)M_{\infty}({\rm d}y)
$$
upon changing the variable $y=1/(1-z)$.  Relations~\eqref{eq:P_s_n_k_quotients_polys} and~\eqref{eq:P_s_n_k_quotients_infinity_polys} follow from~\eqref{eq:P_s_n_k_quotients} and~\eqref{eq:P_s_n_k_quotients_infinity}. Ultimately, also (f) follows and the proof is complete.
\end{proof}

\begin{remark}
Note that
\begin{equation*}
\frac{\log P_{n}(1)}{n}=\int_{(-\infty,0]}\log (1-x)\lsem P_n\rsem_n ({\rm d}x).
\end{equation*}
The function $x\mapsto \log(1-x)$ is continuous but unbounded on $[-\infty,0]$. Therefore, $\lsem P_n\rsem_n\toweak \mu$ on $[-\infty,0]$ does not in general imply that
\begin{equation}\label{eq:log_moments_convergence}
\frac{\log P_{n}(1)}{n}=\int_{[-\infty,0]}\log (1-x)\lsem P_n\rsem_n ({\rm d}x)\ton \int_{[-\infty,0]}\log(1-x)\mu({\rm d}x)=:L_{\mu}\in (0,+\infty],
\end{equation}
even if $L_{\mu}<\infty$. However, there are situations when~\eqref{eq:log_moments_convergence} holds automatically. Assume, for example that that $\deg P_n=n$ and all roots of $P_n$ are contained in the interval $[c_1,0]$, where $c_1 < 0$  does not depend on $n$. Then $\mu$ is concentrated on $[c_1,0]$, $L_{\mu}=\int_{[c_1,0]}\log(1-x)\mu({\rm d}x)\in (0,+\infty)$ and~\eqref{eq:log_moments_convergence} holds. In this case also $\overline{m}=1$ and~\eqref{eq:zeros_imply_exp_profile_claim} can be replaced by
$$
\sup_{(\underline{m}+\eps) n \leq k \leq (1-\eps) n} \left|\frac{\log a_{k:n}}{n} -L_{\mu} -g\left(\frac kn\right)  \right| \ton 0,
\qquad
\text { for all }\eps\in (0,1).
$$
A necessary and sufficient condition for~\eqref{eq:log_moments_convergence} is the uniform integrability of the family of random variables $(\log (1-Z_n))_{n\in\mathbb{N}}$, where $Z_n$ has the distribution $\lsem P_n\rsem_n$, $n\in\mathbb{N}$.
\end{remark}

We shall now derive Theorem~\ref{theo:exp_profile_implies_zeros} as a consequence of Theorem~\ref{thm:main_probabilistic_converse}.

\begin{proof}[Proof of Theorem \ref{theo:exp_profile_implies_zeros}]
It suffices to check~\eqref{eq:main_polys_converse_assumption_p_n(1)}. Indeed, then
\begin{equation*}
\lim_{n\to\infty}\frac{1}{n}\log \frac{a_{\lfloor \alpha n \rfloor:n}}{P_n(1)}=
\begin{cases}
g(\alpha)-\sup_{\alpha\in (\underline{m},\overline{m})}(g(\alpha)),&\text{if }\alpha\in (\underline{m},\overline{m}),\\
-\infty,&\text{if }\alpha\in [0,\underline{m})\cup (\overline{m},1].
\end{cases}
\end{equation*}
and the statements (a)-(e) follow from Theorem~\ref{thm:main_probabilistic_converse} applied to $\mathbb{P}[S_n=\lfloor\alpha n\rfloor]=a_{\lfloor \alpha n\rfloor:n}/P_n(1)$.

For the proof of~\eqref{eq:main_polys_converse_assumption_p_n(1)} note that
\begin{equation}\label{eq:main_polys_converse_assumption_p_n(1)_proof1}
\max_{0\leq k\leq n}\frac{\log a_{k:n}}{n}\leq \frac{\log P_n(1)}{n} \leq \max_{0\leq k\leq n}\frac{\log a_{k:n}}{n}+\frac{\log (n+1)}{n}.
\end{equation}
Using that $P_n$ has only nonpositive real roots, we conclude that the sequence
$$
b_{k:n}:=\frac{1}{n}\log a_{k:n},\quad k\in\{0,1,\ldots,n\},
$$
is concave for every fixed $n\in\mathbb{N}$. Let $g_n:[0,1]\to \mathbb{R}$ be a linear interpolation between the points $\left(\frac{k}{n},b_{k:n}\right)$, $k\in\{0,1,\ldots,n\}$. Thus, $g_n$ is a concave function and
$$
\max_{0\leq k\leq n}\frac{\log a_{k:n}}{n}=\sup_{\alpha\in [0,1]}(g_n(\alpha))
$$
Condition~\eqref{eq:exp_profile_definition} implies that for every $\alpha\notin\{\underline{m},\overline{m}\}$, $g_n(\alpha)$ converges to the right hand side of~\eqref{eq:exp_profile_definition} as $n\to\infty$. By~\cite[Theorem 7.17]{Rockafellar+Wets:1998} this yields hypo-graphical convergence of $g_n$ to the same limit (which we extend to $\alpha\in\{\underline{m},\overline{m}\}$ by upper semicontinuity). By Theorem 7.33 in the same reference,
$$
\sup_{\alpha\in [0,1]}g_n(\alpha)~\to~
\sup_{\alpha\in (\underline{m},\overline{m})}(g(\alpha)),\quad n\to\infty.
$$
Combining this with~\eqref{eq:main_polys_converse_assumption_p_n(1)_proof1} yields~\eqref{eq:main_polys_converse_assumption_p_n(1)}. The proof is complete.
\end{proof}

\begin{remark}
It seems possible to extend Theorem~\ref{theo:zeros_imply_exp_profile} to polynomials with real coefficients having only roots with nonpositive real part. Since such roots come in complex conjugated pairs and $(x-a-b\ii)(x-a+b\ii) = x^2 - 2ax + a^2+b^2$, it is sufficient to extend Theorem~\ref{thm:main_probabilistic_direct} to random variables taking values in $\{0,1,2\}$. However, there are serious difficulties when trying to extend Theorems~\ref{thm:main_probabilistic_converse} and~\ref{theo:exp_profile_implies_zeros} to this setting since knowing the Cauchy transform on an interval of the real line does not allow us to recover a general distribution in the left half-plane. For example, a $(x+2)^n$ has the same profile as $(x+2)^n - 1$, but the limiting distributions of roots are different.
\end{remark}

\begin{remark}
Using formulas~\eqref{eq:exponential_tilting_probabilities1} and~\eqref{eq:llt_changed_measure} one can write down the exact asymptotics for $\P[S_n = k]$ valid uniformly in the range $\eps n \leq k \leq (1-\eps) n$. Naturally, this asymptotics involves $\theta_{\ast}(n,k)$. We shall return to this discussion in~\cite{jalowy_kabluchko_marynych_zeros_profiles_part_III}, where these exact asymptotics will become key ingredients for deriving functional limit theorems.
\end{remark}

\section{Proofs of the results on finite free convolutions}\label{sec:proof_convolutions}

\subsection{Free probability tools: \texorpdfstring{$R$}{R}- and \texorpdfstring{$S$}{S}-transforms via exponential profiles} \label{subsec:R_S_transf}
Let $\mu$ be a probability measure on $[-\infty,A]$, for some $A\in \R$. Recall that the Cauchy transform of $\mu$ is defined by
$$
G_{\mu}(t):=\int_{[-\infty,A]}\frac{1}{t-z}\dd\mu(z)=\int_{(-\infty,A]}\frac{1}{t-z}\dd\mu(z),  \qquad t\in \C \backslash (-\infty, A].
$$
Suppose that $\mu(\{-\infty\})\neq 1$ to exclude the case $G_\mu \equiv 0$.  Restricted to the interval $(A,\infty)$, $t\mapsto G_\mu(t)$ is a continuous strictly decreasing function and  $\lim_{t\to +\infty}G_\mu(t)=0$. Hence, the inverse function $G_{\mu}^{\leftarrow}(s)$ exists for sufficiently small $s>0$, more precisely, for $s\in (0, G_\mu(A+))$.  The $R$-transform~\cite[Theorem~3.3.1, pp.~23--24]{voiculescu_nica_dykema_book} of $\mu$ is then defined by
\begin{align}\label{eq:summary_R}
R _\mu(s) := s G_{\mu}^{\leftarrow}(s)-1,\quad s\in(0,G_\mu(A+)).
\end{align}

For $S$-transforms, see~\cite[pp.~30-31]{voiculescu_nica_dykema_book}, it is customary to work with probability measures supported on the positive half-line. Accordingly, let $\nu$ be a probability measure on $[0, +\infty]$.
The $\psi$-transform of $\nu$ is defined via its Cauchy transform $G_{\nu}(t)=\int_{[0,+\infty)}\frac{1}{t-z}\dd\nu(z)$ via
\begin{equation}\label{eq:psi_mu_def}
\psi_\nu (t) := G_{\nu}(1/t)/t-1=\int_{[0,+\infty]} \frac{zt}{1-zt}\dd\nu(z) = -\nu(\{+\infty\})+\int_{(0,+\infty)} \frac{zt}{1-zt}\dd\nu(z),\quad t<0.
\end{equation}
Suppose that $\nu(\{0\}) + \nu(\{+\infty\})<1$  to exclude the case $\psi_\nu \equiv -\nu(\{+\infty\})$. Then, it follows from the last equation in~\eqref{eq:psi_mu_def} and the monotone convergence theorem that $\psi_\nu$ defines a continuous, strictly increasing  bijection of $(-\infty,0)$ and $(\nu(\{0\})-1,-\nu(\{+\infty\}))$.
The $S$-transform of $\nu$ is defined by
\begin{align}\label{eq:summary_S}
S_\nu (t) := \frac{t+1}{t}\psi_\nu^{\leftarrow}(t),\quad t\in
(\nu(\{0\})-1,-\nu(\{+\infty\})).
\end{align}

Let us now relate these transforms to exponential profiles. To this end, let $\mu$ be a probability measure on $[-\infty, 0]$ such that $\mu(\{-\infty\})+\mu(\{0\})<1$. For every $n\in \N$ let  $P_n(x)$ be a polynomial of degree at most $n$ with nonpositive zeros, nonnegative coefficients and limiting zero distribution $\lsem P_n\rsem_n\toweak \mu$ on $[-\infty,0]$.  Recall that under these assumptions the exponential profile $g$ of $P_n(x)/P_n(1)$ exists on the open interval $(\underline{m},\overline{m})=(\mu(\{0\}),1-\mu(\{-\infty\}))$ due to Theorem~\ref{theo:zeros_imply_exp_profile}. Moreover, the function $g'$ is a strictly decreasing bijection between $(\underline{m},\overline{m})$ and $(-\infty,+\infty)$ and satisfies
\begin{align}\label{eq:summary_Cauchy}
G_{\mu}(t)=\int_{[-\infty,0]}\frac{1}{t-z}\dd\mu(z)=\int_{(-\infty,0]}\frac{1}{t-z}\dd\mu(z)=\frac 1 t (g')^{\leftarrow}\big(-\log t\big),\quad &t>0.
\end{align}
Define a measure $\mu_{+}$ on $[0,+\infty]$ by $\mu_{+}(A)=\mu(-A)$ for every Borel. The next lemma expresses the $R$- transform of $\mu$ and the $\psi$-, $S$-transforms of $\mu_{+}$ via the exponential profile $g$.
\begin{lemma}\label{lem:r_and_s_transform_vs_profile}
For a probability measure $\mu$ on $[-\infty, 0]$ such that $\mu(\{-\infty\})+\mu(\{0\})<1$ we have
\begin{align}
R_{\mu}(t)
&=
\big[(1+\cdot)\eee^{g'(1+\cdot)}\big]^{\leftarrow}(t)= \big[g'(\cdot)+\log(\cdot)\big]^{\leftarrow}(\log t)-1,\quad t\in(0,G_{\mu}(0+)),
\label{eq:r_trans_via_profile}
\\
\psi_{\mu_{+}} (-t)
&=
(g')^{\leftarrow}(\log t)-1,\quad t>0,
\label{eq:summary_psi}
\\
S_{\mu_{+}}(t)
&=-\frac{t+1}{t}\eee^{g'(t+1)},\quad t\in 
(\underline{m}-1,\overline{m}-1). \label{eq:s_trans_via_profile}
\end{align}
The map $\alpha \mapsto \alpha \eee^{g'(\alpha)}$ defines a strictly decreasing bijection between $(\underline{m},\overline{m})$ and $(0,G_{\mu}(0+))$, which ensures that the inverse function in~\eqref{eq:r_trans_via_profile} is well-defined.
\end{lemma}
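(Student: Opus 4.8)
The plan is to derive all three identities from the single relation~\eqref{eq:summary_Cauchy}, namely $G_{\mu}(t) = \frac1t (g')^{\leftarrow}(-\log t)$ for $t>0$, which is furnished by Theorem~\ref{theo:zeros_imply_exp_profile}(b). It is convenient to record the equivalent parametric form: writing $\alpha = (g')^{\leftarrow}(-\log t) \in (\underline m, \overline m)$, so that $t = \eee^{-g'(\alpha)}$, the relation~\eqref{eq:summary_Cauchy} becomes
\[
t = \eee^{-g'(\alpha)}, \qquad G_{\mu}(t) = \alpha\,\eee^{g'(\alpha)}, \qquad \alpha\in(\underline m,\overline m).
\]
Since $g'$ is a strictly decreasing bijection from $(\underline m,\overline m)$ onto $(-\infty,+\infty)$ by Theorem~\ref{theo:zeros_imply_exp_profile}(c), the first equation shows that $t$ runs over $(0,+\infty)$ as $\alpha$ runs over $(\underline m,\overline m)$.

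First I would treat the $\psi$- and $S$-transforms, which are the most direct. Since $\mu_{+}$ is the pushforward of $\mu$ under $z\mapsto -z$, a change of variables gives $G_{\mu_{+}}(u) = -G_{\mu}(-u)$ for $u<0$. Substituting this into the definition~\eqref{eq:psi_mu_def} of $\psi_{\mu_{+}}$ at the point $-t$ (with $t>0$) yields $\psi_{\mu_{+}}(-t) = G_{\mu}(1/t)/t - 1$, and plugging in~\eqref{eq:summary_Cauchy} evaluated at $1/t$ collapses this to~\eqref{eq:summary_psi}. Inverting~\eqref{eq:summary_psi} gives $\psi_{\mu_{+}}^{\leftarrow}(w) = -\eee^{g'(w+1)}$, and feeding this into the definition~\eqref{eq:summary_S} of $S_{\mu_{+}}$ produces~\eqref{eq:s_trans_via_profile} at once; the stated domain $(\underline m - 1, \overline m - 1)$ matches the range of $\psi_{\mu_{+}}$ recorded after~\eqref{eq:psi_mu_def}, once one notes that $\mu_{+}(\{0\}) = \underline m$ and $\mu_{+}(\{+\infty\}) = 1 - \overline m$.

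Next I would handle the $R$-transform via the same parametrization. Setting $s = G_{\mu}(t)$, the definition~\eqref{eq:summary_R} reads $R_{\mu}(s) = s\,G_{\mu}^{\leftarrow}(s) - 1 = s\,t - 1$. From the parametric form, $s\,t = \alpha\,\eee^{g'(\alpha)}\cdot \eee^{-g'(\alpha)} = \alpha$, so, remarkably, $R_{\mu}(s) = \alpha - 1$, where $\alpha$ is the solution of $s = \alpha\,\eee^{g'(\alpha)}$. Writing $h(\alpha) := \alpha\,\eee^{g'(\alpha)}$, this says $R_{\mu}(s) = h^{\leftarrow}(s) - 1 = \big[(1+\cdot)\eee^{g'(1+\cdot)}\big]^{\leftarrow}(s)$, which is the first expression in~\eqref{eq:r_trans_via_profile}; the second follows since $h(\alpha) = \eee^{g'(\alpha)+\log\alpha}$, so that $s = h(\alpha)$ is equivalent to $\log s = g'(\alpha) + \log \alpha$, i.e. $\alpha = \big[g'(\cdot)+\log(\cdot)\big]^{\leftarrow}(\log s)$.

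The hard part, and the step that makes the $R$-transform formula meaningful, is establishing that $h(\alpha) = \alpha\,\eee^{g'(\alpha)}$ is a genuine strictly monotone bijection from $(\underline m, \overline m)$ onto $(0, G_{\mu}(0+))$. Strict concavity of $g$ alone does not suffice; instead I would differentiate $\log h(\alpha) = g'(\alpha) + \log\alpha$ to obtain $(\log h)'(\alpha) = g''(\alpha) + 1/\alpha$, which is strictly negative precisely because $\alpha\mapsto g(\alpha)+\alpha\log\alpha$ is strictly concave, exactly the extra concavity asserted in Theorem~\ref{theo:exp_profile_implies_zeros}(d). Hence $h$ is strictly decreasing and $h^{\leftarrow}$ exists. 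The range is then pinned down by boundary bookkeeping: as $\alpha\to\underline m+$ we have $t = \eee^{-g'(\alpha)}\to 0+$, so $h(\alpha) = G_{\mu}(t)\to G_{\mu}(0+)$, while as $\alpha\to\overline m-$ we have $t\to+\infty$, so $h(\alpha)\to 0$. This confirms the domain $t\in(0,G_{\mu}(0+))$ in~\eqref{eq:r_trans_via_profile} and completes the argument.
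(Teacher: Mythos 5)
Your proof is correct and follows essentially the same route as the paper: all three identities are read off from the relation $tG_{\mu}(t)=(g')^{\leftarrow}(-\log t)$ of Theorem~\ref{theo:zeros_imply_exp_profile}(b), the $R$-transform formula amounts to inverting $R_{\mu}(G_{\mu}(t))=tG_{\mu}(t)-1$, and the bijectivity of $\alpha\mapsto\alpha\eee^{g'(\alpha)}$ rests on the strict concavity of $\alpha\mapsto g(\alpha)+\alpha\log\alpha$ from Theorem~\ref{theo:exp_profile_implies_zeros}(d) together with the boundary behavior of $g'$, so your parametric treatment of $R_\mu$ and your unified limit argument for $G_{\mu}(0+)$ (versus the paper's case split between $\underline{m}>0$ and $\underline{m}=0$) are only cosmetic reorganizations. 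One wording caveat: strict concavity of $g(\alpha)+\alpha\log\alpha$ gives that its derivative $g'(\alpha)+\log\alpha+1$ is strictly decreasing --- which is exactly what is needed for $h(\alpha)=\alpha\eee^{g'(\alpha)}$ to be strictly decreasing --- but it does not by itself justify the pointwise strict inequality $g''(\alpha)+1/\alpha<0$ that you assert, so you should phrase that step via the strictly decreasing derivative rather than the sign of the second derivative.
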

\begin{proof}
We start with proving the stated properties of the function $\alpha \mapsto \alpha \eee^{g'(\alpha)}$. The map $\alpha \mapsto \alpha \eee^{g'(\alpha)}=\eee^{g'(\alpha)+\log \alpha}$ is strictly decreasing by Theorem~\ref{theo:zeros_imply_exp_profile}(d). Observe that
$\lim_{\alpha\to \overline{m}-}\alpha \eee^{g'(\alpha)}=0$ since $\lim_{\alpha\to \overline{m}-}g'(\alpha)=-\infty$. If $\underline{m}>0$, then $g'(\underline{m}+)=+\infty$ and hence
$\lim_{\alpha\to \underline{m}+}\alpha \eee^{g'(\alpha)}=+\infty$, which coincides with $G_{\mu}(0+)=+\infty$. Finally, if $\underline{m}=0$, then $\lim_{t\to 0+}tG(t)=\underline{m}=0$ by Theorem~\ref{theo:zeros_imply_exp_profile}(b) and therefore
\begin{equation}\label{eq:G_0_plus_as_limit}
\lim_{\alpha\to\underline{m}+}\alpha\eee^{g'(\alpha)}=\lim_{\alpha\to 0+}\alpha\eee^{g'(\alpha)}=\lim_{\alpha\to 0+}\frac{\alpha}{\eee^{-g'(\alpha)}}=\lim_{t\to 0+}\frac{tG_\mu(t)}{\eee^{-g'(tG_{\mu}(t))}}=\lim_{t\to 0+}G_\mu(t)=G_{\mu}(0+).
\end{equation}
In order to prove~\eqref{eq:r_trans_via_profile} note that
$$
R_{\mu}(G_{\mu}(t))=tG_{\mu}(t)-1,\quad t>0.
$$
Inverting this equality and using Theorem~\ref{theo:zeros_imply_exp_profile}(b) yields
$$
G_{\mu}^{\leftarrow}(R_{\mu}^{\leftarrow}(s))=\eee^{-g'(s+1)},\quad s\in (\underline{m}-1,\overline{m}-1).
$$
Applying $G_{\mu}$ to the both sides and again using Theorem~\ref{theo:zeros_imply_exp_profile}(b) implies
$$
R_{\mu}^{\leftarrow}(s)=G_{\mu}(\eee^{-g'(s+1)})=G_{\mu}(\eee^{-g'(s+1)})\eee^{-g'(s+1)}\eee^{g'(s+1)}=(1+s)\eee^{g'(s+1)},\quad s\in (\underline{m}-1,\overline{m}-1),
$$
which yields~\eqref{eq:r_trans_via_profile}.

For the proof of~\eqref{eq:summary_psi} observe that
$$
G_{\mu_{+}}(-s)=\int_{[0,+\infty)}\frac{\dd \mu_{+}(v)}{-s-v}=\int_{(-\infty,0]}\frac{\dd \mu(z)}{-s+z}=-G_{\mu}(s),\quad s>0.
$$
Hence using the first equality in~\eqref{eq:psi_mu_def} together with the third equality in~\eqref{eq:summary_Cauchy} we obtain
$$
\psi_{\mu_{+}}(-t)=-\frac{1}{t}G_{\mu_{+}}(-1/t)-1=\frac{1}{t}G_{\mu}(1/t)-1=(g')^{\leftarrow}(\log t)-1,\quad t>0.
$$
This proves~\eqref{eq:summary_psi} and again shows that  $\psi_{\mu_{+}}$ is a bijection of $(-\infty,0)$ and $(\underline{m}-1,\overline{m}-1)=\big(\mu(\{0\})-1,-\mu(\{-\infty\})\big)$.
Formula~\eqref{eq:s_trans_via_profile} follows immediately from~\eqref{eq:summary_psi} and~\eqref{eq:summary_S} by taking inverses.
\end{proof}

\begin{remark}
Recall from Theorem~\ref{theo:zeros_imply_exp_profile}~(e) that the quotient $a_{k+1:n}/a_{k:n}$ converges to $\eee^{g'(\alpha)}$ as $n\to\infty$ if $k= k(n)\sim \alpha n$ with $\alpha\in (\underline{m}, \overline{m})$. Equation~\eqref{eq:s_trans_via_profile} relates $\eee^{g'(\alpha)}$ to the $S$-transform of $\mu_+$. This motivated  a definition of a finite $S$-transform in~\cite{arizmendi2024s}.
\end{remark}

\subsection{Definitions of free convolutions}\label{subsec:free_conv_defs}
The $R$- and $S$-transforms  play a central role in free probability theory since they linearize additive and multiplicative free convolutions.
We need an extension of these convolutions to probability measures which may have an atom at $-\infty$ (for $\boxplus$) or $+\infty$ (for $\boxtimes$). The following Propositions~\ref{prop:free_additive_convol_existence} and~\ref{prop:free_multiplicative_convol_existence} will serve as definitions of $\boxplus$ and $\boxtimes$ in this setting.

\begin{proposition}\label{prop:free_additive_convol_existence}
Let $\mu_1$ and $\mu_2$ be probability measures on $[-\infty, A]$ such that
$$
\mu_1(\{-\infty\}) + \mu_2(\{-\infty\}) < 1.
$$
Here, $A\in \R$. Then, there exists a probability measure $\mu_1\boxplus\mu_2$ on $[-\infty, 2A]$ having the $R$-transform
\begin{align}\label{eq:R_free_convolution}
R_{\mu_1\boxplus\mu_2}(t)=R_{\mu_1}(t)+R_{\mu_2}(t),
\end{align}
for all sufficiently small $t>0$.
\end{proposition}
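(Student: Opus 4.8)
The plan is to reduce the statement to the classical existence theory for free additive convolution of probability measures on $\R$ (developed in \cite{bercovici_voiculescu,maassen_free}) by truncating the atom at $-\infty$ to a finite location and passing to a limit. Fix a large $N > |A|$ and set $\mu_i^{(N)} := \mu_i|_{(-\infty,A]} + \mu_i(\{-\infty\})\,\delta_{-N}$, a probability measure supported in $[-N,A]$ (so $\mu_i^{(N)} = \mu_i$ when $\mu_i$ has no atom at $-\infty$). By the classical theory, the free additive convolution $\nu_N := \mu_1^{(N)} \boxplus \mu_2^{(N)}$ exists as a probability measure on $\R$ and satisfies $R_{\nu_N} = R_{\mu_1^{(N)}} + R_{\mu_2^{(N)}}$ in the sense of~\eqref{eq:summary_R}. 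Realizing each $\mu_i^{(N)}$ as the spectral distribution of a bounded self-adjoint element $a_i$ with spectrum in $[-N,A]$ in a tracial $W^\ast$-probability space, with $a_1,a_2$ free, the operator inequality $a_1 + a_2 \le 2A$ shows $\supp \nu_N \subseteq [-2N, 2A] \subset (-\infty, 2A]$.

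Next I would establish continuity of the $R$-transforms as $N\to\infty$. For fixed $t > A$, the atom contribution vanishes, $G_{\mu_i^{(N)}}(t) = G_{\mu_i}(t) + \mu_i(\{-\infty\})/(t+N) \ton G_{\mu_i}(t)$, and since both sides are continuous and strictly decreasing in $t$, a Dini argument upgrades this to locally uniform convergence on $(A,\infty)$ and to convergence of the inverse functions. Hence $R_{\mu_i^{(N)}}(s) = sG_{\mu_i^{(N)}}^{\leftarrow}(s) - 1 \to R_{\mu_i}(s)$ locally uniformly for small $s>0$; crucially the common domain does not collapse, because $G_{\mu_i^{(N)}}(A+) \to G_{\mu_i}(A+) > 0$. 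It follows that $R_{\nu_N}(s) \to R_{\mu_1}(s) + R_{\mu_2}(s)$ on a fixed interval $(0,s_0)$.

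I would then use tightness and uniqueness to identify the limit. The $\nu_N$ all live on the compact space $[-\infty, 2A]$, so the family is tight; let $\nu$ be any subsequential weak limit. Since $z\mapsto 1/(t-z)$ is bounded and continuous on $[-\infty,2A]$ for $t>2A$, weak convergence yields $G_{\nu_N}(t)\to G_\nu(t)$ and thus $R_{\nu_N}\to R_\nu$; comparing with the previous step gives $R_\nu = R_{\mu_1}+R_{\mu_2}$ on $(0,s_0)$, which is~\eqref{eq:R_free_convolution}. This identity determines $G_\nu^{\leftarrow}$ on $(0,s_0)$, hence $G_\nu$ on a neighbourhood of $+\infty$, hence—by the identity theorem, as $G_\nu$ is analytic off $(-\infty,2A]$—the entire Cauchy transform, and therefore the restriction of $\nu$ to $\R$ via Stieltjes--Perron inversion; the atom at $-\infty$ is pinned down by $\nu(\{-\infty\}) = 1 - \lim_{t\to+\infty} tG_\nu(t)$, see~\eqref{eq:atom_at_infinity_over_R}. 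Since all subsequential limits thus coincide, $\nu_N\toweak\nu$, and we define $\mu_1\boxplus\mu_2 := \nu$. Letting $s\to 0+$ in $sG_\nu^{\leftarrow}(s) = R_{\mu_1}(s)+R_{\mu_2}(s)+1$ and using $sG_{\mu_i}^{\leftarrow}(s)\to 1-\mu_i(\{-\infty\})$ gives $\lim_{t\to+\infty} tG_\nu(t) = 1-\mu_1(\{-\infty\})-\mu_2(\{-\infty\})$, so hypothesis (iii) guarantees $\nu(\{-\infty\}) = \mu_1(\{-\infty\})+\mu_2(\{-\infty\}) < 1$ and $\nu$ is a genuine probability measure on $[-\infty,2A]$.

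I expect the main obstacle to be the interchange of the two limits: weak convergence of $\nu_N$ on the one-point compactification $[-\infty,2A]$ on one side, and convergence of the $R$-transforms (equivalently, of $G_{\nu_N}$ near $+\infty$) on the other, together with the verification that the interval $(0,s_0)$ on which~\eqref{eq:R_free_convolution} holds stays bounded away from $0$ uniformly in $N$. It is precisely this uniform control of the domain that makes the $R$-transform of the weak limit computable and forces uniqueness of the subsequential limit.
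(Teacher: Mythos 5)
Your route is genuinely different from the paper's --- the paper constructs $\mu_1\boxplus\mu_2$ as the weak limit on $[-\infty,2A]$ of $\lsem P_n^{(1)}\boxplus_n P_n^{(2)}\rsem_n$ for approximating polynomial sequences, via Theorems~\ref{theo:zeros_imply_exp_profile} and~\ref{theo:exp_profile_implies_zeros}, Lemma~\ref{lem:exp_profile_finite_free_additive_conv} and Lemma~\ref{lem:r_and_s_transform_vs_profile}, whereas you reduce to the classical Bercovici--Voiculescu theory on $\R$ and treat the atom at $-\infty$ by truncation --- but your argument has a genuine gap, and it sits exactly at the point you yourself label ``the main obstacle''. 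With the paper's definition~\eqref{eq:summary_R}, the identity $R_{\nu_N}=R_{\mu_1^{(N)}}+R_{\mu_2^{(N)}}$ holds (by real-analytic continuation from $s\to0+$) on $(0,s_N^*)$ with $s_N^*=\min\bigl(G_{\mu_1^{(N)}}(A+),\,G_{\mu_2^{(N)}}(A+),\,G_{\nu_N}(2A+)\bigr)$. Your Dini argument controls the first two quantities uniformly in $N$, but not the third: $G_{\nu_N}(2A+)$ is small precisely when most of the mass of $\nu_N$ has drifted far to the left, and ruling that out is exactly where the hypothesis $\mu_1(\{-\infty\})+\mu_2(\{-\infty\})<1$ must be used. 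This cannot be a routine verification: if $\mu_1(\{-\infty\})+\mu_2(\{-\infty\})\geq1$, then all the mass of $\nu_N$ really does escape, $s_N^*\to0$, and the conclusion fails because $R_\nu$ is undefined. Your proposal invokes the hypothesis only at the very end, to evaluate the atom of an already-identified limit; as structured, the argument would run verbatim without it, which shows a step is missing. So the sentence ``It follows that $R_{\nu_N}(s)\to R_{\mu_1}(s)+R_{\mu_2}(s)$ on a fixed interval $(0,s_0)$'' does not follow from what precedes it, and the same uniformity is silently reused when you pass from ``$G_{\nu_N}(t)\to G_\nu(t)$ for $t>2A$'' to ``$R_{\nu_N}\to R_\nu$'': convergence of decreasing functions yields convergence of their inverses on a fixed interval only if the ranges do not collapse, i.e.\ only if $\inf_N G_{\nu_N}(2A+)>0$, and one must separately exclude $G_\nu\equiv0$.

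The gap looks fillable inside your scheme, but it needs an actual argument; for instance: given $\eps>0$, pick $K$ with $\mu_i([-K,A])\geq\overline{m}_i-\eps$, where $\overline{m}_i=1-\mu_i(\{-\infty\})$; restricting the integral defining $tG_{\mu_i^{(N)}}(t)$ to $[-K,A]$ gives $1+R_{\mu_i^{(N)}}(s)\geq\overline{m}_i-2\eps$ for all $s\in(0,\sigma_\eps)$ and $N$ large, with $\sigma_\eps>0$ independent of $N$; feeding this into the additivity identity yields $s\,G_{\nu_N}^{\leftarrow}(s)\geq\overline{m}_1+\overline{m}_2-1-4\eps>0$ on $(0,\min(s_N^*,\sigma_\eps))$, which is incompatible with $s_N^*<\sigma_\eps$ because $G_{\nu_N}^{\leftarrow}(s)$ decreases to $\sup\supp\nu_N\leq2A$ as $s\uparrow s_N^*$ while $s$ stays small; hence $s_N^*\geq\sigma_\eps$ eventually. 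None of this is in the proposal. A secondary slip: the measure $\mu_i^{(N)}$ as you define it (moving only the atom at $-\infty$ to $-N$) is in general \emph{not} compactly supported, since $\mu_i$ restricted to $(-\infty,A]$ may be unbounded below, so it cannot be realized by a bounded self-adjoint element with spectrum in $[-N,A]$; either truncate the whole measure (push $\mu_i$ forward under $z\mapsto\max(z,-N)$) or quote the half-line support bound $\supp(\rho_1\boxplus\rho_2)\subseteq(-\infty,C_1+C_2]$ from the theory of unbounded (affiliated) operators. The paper's proof meets neither issue: on the compact space $[-\infty,0]$ the limit exists automatically by Theorem~\ref{theo:exp_profile_implies_zeros}, and the hypothesis on the atoms enters transparently as the requirement that the profile of $P_n^{(1)}\boxplus_n P_n^{(2)}$ have nonempty domain $(0,\overline{m}_1+\overline{m}_2-1)$.
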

A verification of Proposition~\ref{prop:free_additive_convol_existence} will be a part of the proof (Step 4) of Theorem~\ref{theo:finite_free_additive_conv_free_additive} given in Section~\ref{subsec:proof_additive_conv}. Recall that $R_{\mu_i}$ is  defined on an interval $(0,\eps_i)$ with sufficiently small $\eps_i>0$. Then, $R_{\mu_1\boxplus\mu_2}$ is defined on the interval $(0, \min (\eps_1,\eps_2))$.
\begin{remark}
If $\mu_1$ and $\mu_2$ have no atoms at $-\infty$, then $\mu_1 \boxplus \mu_2$ coincides with  the classical free convolution. This follows from~\eqref{eq:R_free_convolution}. Formula~\eqref{eq:atom_at_infinity_over_R} implies that $\lim_{t\to +\infty} t G_{\mu_i}(t) = 1-{\mu}_i(\{-\infty\})$ and from Lemma~\ref{lem:r_and_s_transform_vs_profile} it follows that $R_{\mu_i}(0+) = -\mu_i(\{-\infty\})$, for $i=1,2$. As a consequence,  $(\mu_1\boxplus\mu_2)(\{-\infty\}) = \mu_1(\{-\infty\}) + \mu_2(\{-\infty\})<1$. If $\mu_1$ and $\mu_2$ are such that $\mu_1(\{-\infty\}) + \mu_2(\{-\infty\})=1$, it is natural to extend the above definition by putting $\mu_1 \boxplus \mu_2 = \delta_{-\infty}$. Finally, if $\mu_1(\{-\infty\}) + \mu_2(\{-\infty\})>1$, then $\mu_1 \boxplus \mu_2$ is not well-defined.
\end{remark}
\begin{example}
Let $\mu_i = p_i \delta_{a_i} + (1-p_i)\delta_{-\infty}$ for $i=1,2$,  where $p_1,p_2\in [0,1]$ are such that $p_1+p_2> 1$ and $a_1,a_2\in \R$. Then, $G_{\mu_i}(t) = \frac{p_i}{t-a_i}$ and $R_{\mu_i}(t) = p_i + a_i t - 1$. Thus,  $R_{\mu_1\boxplus \mu_2}(t) = (p_1+p_2-1) + (a_1+a_2)t -1$ and hence $\mu_1 \boxplus \mu_2 = (p_1+p_2-1) \delta_{a_1+a_2} + (2-p_1-p_2) \delta_{-\infty}$.
\end{example}

A general formula for $\mu_1\boxplus \mu_2$ for probability measures $\mu_1$ and $\mu_2$ which may have atoms at $-\infty$
in terms of the usual operation $\boxplus$ will be given in Proposition~\ref{prop:free_add_conv_with_infty} below.

Let us now turn to $\boxtimes$ and the $S$-transforms. A verification of the next proposition will be given at Step 3 of the proof of Theorem~\ref{theo:finite_free_mult_conv} in Section~\ref{subsec:proof_multipl_conv}.

\begin{proposition}\label{prop:free_multiplicative_convol_existence}
Let $\nu_1$ and $\nu_2$ be probability measures on $[0,+\infty]$ such that~\eqref{eq:non-empty_intersection} holds true. Then, there exists a probability measure $\nu_1\boxtimes\nu_2$ on $[0,+\infty]$ having the $S$-transform
\begin{align}\label{eq:S_free_convolution}
S_{\nu_1 \boxtimes \nu_2}(t) = S_{\nu_1}(t) S_{\nu_2}(t),
\end{align}
for all $t$ in $I:=(\nu_1(\{0\})-1,-\nu_1(\{+\infty\}))\cap (\nu_2(\{0\})-1,-\nu_2(\{+\infty\}))$.
\end{proposition}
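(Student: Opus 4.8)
The plan is to construct $\nu_1\boxtimes\nu_2$ as the limiting zero distribution of a sequence of finite free multiplicative convolutions of concrete polynomials, and then to read off its $S$-transform from the associated exponential profile via Lemma~\ref{lem:r_and_s_transform_vs_profile}.

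\emph{Step 1 (realization).} Condition~\eqref{eq:non-empty_intersection} forces $\nu_i(\{0\})+\nu_i(\{+\infty\})<1$ for $i=1,2$. Hence each $\nu_i$ is the weak limit on $[0,+\infty]$ of empirical zero distributions of polynomials $Q_n^{(i)}$ of degree $\leq n$ with only nonnegative roots: place $\lfloor \nu_i(\{0\})n\rfloor$ roots at $0$, distribute $\approx(1-\nu_i(\{0\})-\nu_i(\{+\infty\}))n$ further roots according to the quantiles of $\nu_i|_{(0,+\infty)}$, and let the remaining $\approx\nu_i(\{+\infty\})n$ roots escape to $+\infty$ (a degree deficiency). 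Writing $Q_n^{(i)}(x)=\sum_{k}(-1)^{n-k}a_{k:n}^{(i)}x^k$ with $a_{k:n}^{(i)}\geq 0$, the reflected polynomials $\widetilde Q_n^{(i)}(x):=(-1)^n Q_n^{(i)}(-x)=\sum_k a_{k:n}^{(i)}x^k$ have only nonpositive roots, and $\lsem \widetilde Q_n^{(i)}\rsem_n$ converges to the reflection $\mu_i$ of $\nu_i$, with $\underline m_i:=\mu_i(\{0\})=\nu_i(\{0\})$ and $\overline m_i:=1-\mu_i(\{-\infty\})=1-\nu_i(\{+\infty\})$, so that $\underline m_i<\overline m_i$. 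By Theorem~\ref{theo:zeros_imply_exp_profile}(a), the normalized polynomial $\widetilde Q_n^{(i)}/\widetilde Q_n^{(i)}(1)$ has an exponential profile $g_i$ on $(\underline m_i,\overline m_i)$.

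\emph{Step 2 (profile of the convolution).} By Definition~\ref{def:finite_free_convolution}, the convolution $Q_n^{(1)}\boxtimes_n Q_n^{(2)}$ has coefficients $(-1)^{n-k}b_{k:n}$ with $b_{k:n}=a_{k:n}^{(1)}a_{k:n}^{(2)}/\binom nk\geq 0$, and it has only nonnegative roots by Theorem~\ref{thm:real_rootedness}(ii). Dividing the reflected convolution $\sum_k b_{k:n}x^k$ by the constant $\widetilde Q_n^{(1)}(1)\widetilde Q_n^{(2)}(1)$ and using $\tfrac1n\log\binom nk\to H(\alpha):=-\alpha\log\alpha-(1-\alpha)\log(1-\alpha)$ together with~\eqref{eq:zeros_imply_exp_profile_claim}, \eqref{eq:divergence_outside_polys} and~\eqref{eq:P_s_n_k_quotients_infinity_polys}, this sequence has, in the sense of Definition~\ref{def:exp_profile}, the exponential profile
\[
g:=g_1+g_2-H \quad\text{on}\quad (\underline m_1,\overline m_1)\cap(\underline m_2,\overline m_2)=(\max(\underline m_1,\underline m_2),\min(\overline m_1,\overline m_2)),
\]
which is nonempty precisely by~\eqref{eq:non-empty_intersection}, and equal to $-\infty$ on the interior of the complement.

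\emph{Step 3 (conclusion).} Since the reflected convolution has only nonpositive roots and profile $g$, Theorem~\ref{theo:exp_profile_implies_zeros} applies: $\lsem Q_n^{(1)}\boxtimes_n Q_n^{(2)}\rsem_n$ converges weakly on $[0,+\infty]$ to a probability measure $\nu$, which we take as the definition of $\nu_1\boxtimes\nu_2$. By part (e) of that theorem, $\nu(\{0\})=\max(\nu_1(\{0\}),\nu_2(\{0\}))$ and $\nu(\{+\infty\})=\max(\nu_1(\{+\infty\}),\nu_2(\{+\infty\}))$, so $S_\nu$ is defined on $(\underline m-1,\overline m-1)=I$, where $\underline m=\max(\underline m_1,\underline m_2)$ and $\overline m=\min(\overline m_1,\overline m_2)$. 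Finally, $g'=g_1'+g_2'-H'$ with $\eee^{-H'(\alpha)}=\alpha/(1-\alpha)$, so formula~\eqref{eq:s_trans_via_profile} of Lemma~\ref{lem:r_and_s_transform_vs_profile} gives, for $t\in I$,
\[
S_\nu(t)=-\frac{t+1}{t}\eee^{g'(t+1)}=\Bigl(-\frac{t+1}{t}\eee^{g_1'(t+1)}\Bigr)\Bigl(-\frac{t+1}{t}\eee^{g_2'(t+1)}\Bigr)=S_{\nu_1}(t)S_{\nu_2}(t),
\]
which is the desired identity~\eqref{eq:S_free_convolution}.

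The main obstacle is Step 2: one must verify rigorously that the additive superposition of profiles survives the operation $\boxtimes_n$ uniformly in $k$. This amounts to combining the uniform convergence~\eqref{eq:zeros_imply_exp_profile_claim} of each $g_i$ with the Stirling asymptotics of $\binom nk$ (whose logarithmic derivative blows up at the endpoints $0$ and $1$), and to controlling the ranges in which one of the $a_{k:n}^{(i)}$ vanishes by means of~\eqref{eq:divergence_outside_polys} and~\eqref{eq:P_s_n_k_quotients_infinity_polys}, so that $g$ genuinely qualifies as an exponential profile in the sense of Definition~\ref{def:exp_profile}. Once this is secured, the identification of $\nu$, of its atoms at $0$ and $+\infty$, and of its $S$-transform reduces to the direct computation displayed above.
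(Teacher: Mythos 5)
Your proposal follows essentially the same route as the paper's own proof: the paper establishes this Proposition jointly with Theorem~\ref{theo:finite_free_mult_conv}, by (i) extracting the profiles $g_1,g_2$ of the reflected factors via Theorem~\ref{theo:zeros_imply_exp_profile}, (ii) showing that the reflected convolution has profile $g_1+g_2+\alpha\log\alpha+(1-\alpha)\log(1-\alpha)$ on the intersection interval and decays superexponentially outside it, and (iii) applying Theorem~\ref{theo:exp_profile_implies_zeros} together with formula~\eqref{eq:s_trans_via_profile} of Lemma~\ref{lem:r_and_s_transform_vs_profile} to identify the limit and its $S$-transform. Your Steps 1--3 are exactly this, with the realization of $\nu_1,\nu_2$ by explicit root configurations made explicit (the paper takes the polynomial sequences as given in the theorem's hypotheses, so your Step 1 is a legitimate and in fact necessary addition for the Proposition as a standalone statement). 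Your Step 2, which you flag as the main obstacle, is carried out in the paper's Step 2 with precisely the ingredients you name: Stirling asymptotics for $\binom nk$, the uniform convergence~\eqref{eq:zeros_imply_exp_profile_claim}, and the bound $b_{k:n}\leq a^{(1)}_{k:n}$ (valid after normalizing $\widetilde Q_n^{(2)}(1)=1$) to handle the ranges where one of the factor profiles is $-\infty$.

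There is, however, one concrete omission: you invoke Theorem~\ref{thm:real_rootedness}(ii) for the nonnegativity of the roots of $Q_n^{(1)}\boxtimes_n Q_n^{(2)}$ without verifying its hypothesis that neither polynomial is divisible by $x^{\min(\deg Q_n^{(1)},\,\deg Q_n^{(2)})}$. The paper devotes its entire Step 0 to this point, deducing it from~\eqref{eq:non-empty_intersection} via the Portmanteau lemma; without it, the real-rootedness of the convolution --- and hence the applicability of Theorem~\ref{theo:exp_profile_implies_zeros}, which requires nonpositive roots of the reflected polynomials --- is not justified. In your setting the check is a one-liner, because your construction is explicit: the multiplicity of the root $0$ in $Q_n^{(i)}$ is $\lfloor\nu_i(\{0\})n\rfloor$, while $\deg Q_n^{(j)}\approx(1-\nu_j(\{+\infty\}))n$ for $j=1,2$, and $\nu_i(\{0\})+\nu_j(\{+\infty\})<1$ for all $i,j$ by~\eqref{eq:non-empty_intersection}, so the divisibility condition holds for all sufficiently large $n$. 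Adding this line closes the gap; everything else in your argument matches the paper's proof.
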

Condition~\eqref{eq:non-empty_intersection} on the atoms of $\nu_1$ and $\nu_2$ is equivalent to $I\neq \varnothing$.  If $\nu_1(\{+\infty\})=\nu_2(\{+\infty\})=0$, then $\nu_1\boxtimes \nu_2$ is the classical free multiplicative convolution. This follows from~\cite[Corollary~6.6]{bercovici_voiculescu}.

\subsection{Proof for the additive free convolution}\label{subsec:proof_additive_conv}
In this and the subsequent sections we prove Theorems~\ref{theo:finite_free_additive_conv_free_additive} and~\ref{theo:finite_free_mult_conv}. The idea of both proofs is to pass to exponential profiles. For concreteness, consider the setting of Theorem~\ref{theo:finite_free_additive_conv_free_additive}. Knowing the asymptotic zero distributions of $P_n^{(1)}$ and $P_n^{(2)}$ we compute the exponential profiles of these polynomials using Theorem~\ref{theo:zeros_imply_exp_profile}. Knowing these profiles, we compute the exponential profile of $P_n^{(1)} \boxplus_n P_n^{(2)}$.   Finally, applying Theorem~\ref{theo:exp_profile_implies_zeros} we can compute the asymptotic distribution of roots of the finite free convolution. Identifying the distributions on the last step is most conveniently done using $R$- and $S$- transforms as defined in Section~\ref{subsec:R_S_transf}.

\begin{proof}[Proof of Theorem~\ref{theo:finite_free_additive_conv_free_additive} and Proposition~\ref{prop:free_additive_convol_existence}] The proof is divided into a sequence of steps.

\noindent
\textsc{Step 0: Preparation.} We shall first  give a proof assuming that $A<0$.  This assumption will be removed in the last step of the proof. So, all roots of $P_n^{(1)}(x)= \sum_{k=0}^n a_{k:n}^{(1)} x^k$ and $P_n^{(2)}(x) = \sum_{k=0}^n a_{k:n}^{(2)} x^k$ are negative and lie in some interval $[-\infty,A]$ with $A < 0$.
Also, $\lsem P_n^{(i)}\rsem_n \to \mu_i$ weakly on $[-\infty, A]$. Here and in the following, the index $i$ ranges in $\{1,2\}$. After multiplication by non-zero numbers we may and shall assume that $P_n^{(1)}(1)=P_n^{(2)}(1)=1$. Moreover, since $A<0$ all coefficients $a_{k:n}^{(i)}$, $k\in \{0,\ldots, n\}$,  are nonnegative. Let $\overline{m}_i:= 1- \mu_i(\{-\infty\})$. Since $\mu_i$ is concentrated on $[-\infty,A]$ and $A<0$ it has no atom at $0$ and we have $\underline{m}_i := 0$. Let $G_i(t)= G_{\mu_i}(t)$ be the Cauchy transform of the probability measure $\mu_i$.

\smallskip\noindent
\textsc{Step 1: Profiles of $P_n^{(1)}$ and $P_n^{(2)}$.}
 From Theorem~\ref{theo:zeros_imply_exp_profile} it follows that $P_n^{(i)}$ has an exponential profile $g_i$ on the interval $(0,\overline{m}_i)$, more precisely,
\begin{equation}\label{eq:proof_finite_free_add_profiles_12}
\sup_{\eps n \leq \ell \leq (\overline{m}_i -\eps) n} \left|\frac 1n \log a_{\ell:n}^{(i)} - g_i\left(\frac \ell n\right)  \right| \ton 0
\quad \text{ and } \quad
\sup_{(\overline{m}_i+\eps)n \leq \ell  \leq n} \frac 1n   \log a_{\ell :n}^{(i)} \ton  -\infty,
\end{equation}
for all sufficiently small $\eps>0$.  Here, the profiles $g_i:(0,\overline{m}_i) \to \R$ are strictly concave, infinitely differentiable functions such that $\eee^{-g_i'}:(0,\overline{m}_i) \to (0,+\infty)$ is a strictly increasing bijection with inverse $t\mapsto tG_i(t)$.

\smallskip\noindent
\textsc{Step 2: Profile of $P_n^{(1)} \boxplus_n P_n^{(2)}$.} Knowing that the sequences $P_{n}^{(1)}$ and $P_n^{(2)}$ have exponential profiles $g_1$ and $g_2$, we can compute the exponential profile of their finite free convolution. Recall from Definition \ref{def:finite_free_convolution} that the coefficients of the finite free convolution are given by
\begin{align}\label{eq:d_k:n}
d_{k:n} := \sum_{j_1+j_2=n+k} a_{j_1:n}^{(1)} a_{j_2:n}^{(2)} \frac{j_1! j_2!}{k! n!}, \qquad k\in \{0,\ldots, n\}.
\end{align}
\begin{lemma}\label{lem:exp_profile_finite_free_additive_conv}
Under assumptions of Theorem~\ref{theo:finite_free_additive_conv_free_additive} and $P_n^{(1)}(1)=P_n^{(2)}(1)=1$, the sequence of polynomials  $P_n = P_n^{(1)} \boxplus_n P_n^{(2)}$, $n\in \N$,  has an exponential profile $g:(0,\overline{m}_1 + \overline{m}_2 - 1) \to \R$ given by
\begin{equation}\label{eq:proof_finite_free_add_profile_of_conv}
g(\gamma):= 
\sup_{\substack{\alpha_1\in (0, \overline{m}_1), \; \alpha_2 \in (0, \overline{m}_2)\\\alpha_1 + \alpha_2 = \gamma + 1}} \left(g_1(\alpha_1) + \alpha_1 \log \alpha_1 + g_2(\alpha_2) +  \alpha_2 \log \alpha_2\right) - \gamma \log \gamma,
\end{equation}
for all $\gamma \in (0,\overline{m})$, where $\overline{m}:=\overline{m}_1 + \overline{m}_2 - 1$ is positive by assumption (iii) of Theorem~\ref{theo:finite_free_additive_conv_free_additive}. That is, for all sufficiently small $\eps>0$,
\begin{equation}\label{eq:lem:exp_profile_finite_free_additive_conv}
\sup_{\eps n \leq k \leq (\overline{m} -\eps) n} \left|\frac 1n \log d_{k:n} - g\left(\frac k n\right)  \right| \ton 0
\quad\text{ and }\quad
\sup_{(\overline{m} + \eps)n\leq k\leq n} \frac 1 n \log d_{k:n}  \ton -\infty,
\end{equation}
\end{lemma}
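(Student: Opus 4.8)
The plan is to estimate $d_{k:n}$ by the Laplace method, treating the sum over $j_1$ (with $j_2=n+k-j_1$) in \eqref{eq:d_k:n} as dominated by its largest term. Since the working assumption $A<0$ forces all coefficients $a_{j:n}^{(i)}$ to be nonnegative, every summand is nonnegative, so with $\gamma:=k/n$, $\alpha_i:=j_i/n$ and $\mathrm{term}(j_1):=a_{j_1:n}^{(1)}a_{j_2:n}^{(2)}\,j_1!\,j_2!/(k!\,n!)$ one has the elementary sandwich
$$
\max_{j_1}\mathrm{term}(j_1)\ \le\ d_{k:n}\ \le\ (n+1)\max_{j_1}\mathrm{term}(j_1),
$$
whence $\tfrac1n\log d_{k:n}=\max_{j_1}\tfrac1n\log\mathrm{term}(j_1)+O(\tfrac{\log n}{n})$. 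The admissibility conditions $0\le j_i\le n$ together with $j_1+j_2=n+k$ force $k\le j_1,j_2\le n$, i.e.\ $\alpha_1,\alpha_2\ge\gamma\ge\eps$; this is the crucial observation that removes any difficulty at the \emph{lower} endpoints $\alpha_i\to 0$, so only the upper endpoints $\alpha_i\to\overline m_i$ will require care.

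First I would record the Stirling asymptotics. Because all factorial arguments lie in $[\eps n,n]$, a uniform Stirling expansion gives, after the $\log n$ and the linear terms cancel by virtue of $\alpha_1+\alpha_2=\gamma+1$,
$$
\frac1n\log\frac{j_1!\,j_2!}{k!\,n!}=\alpha_1\log\alpha_1+\alpha_2\log\alpha_2-\gamma\log\gamma+O\Big(\tfrac{\log n}{n}\Big),
$$
uniformly over the admissible range. Combined with the profile convergence \eqref{eq:proof_finite_free_add_profiles_12}, this shows that on any ``interior'' block $\eps\le\alpha_1\le\overline m_1-\delta$, $\eps\le\alpha_2\le\overline m_2-\delta$ the exponent $\tfrac1n\log\mathrm{term}(j_1)$ converges uniformly to $\Lambda(\alpha_1):=h_1(\alpha_1)+h_2(\alpha_2)-\gamma\log\gamma$, where $h_i(\alpha):=g_i(\alpha)+\alpha\log\alpha$. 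By Theorem~\ref{theo:zeros_imply_exp_profile}(d) each $h_i$ is strictly concave, so $\Lambda$ is strictly concave and its supremum over the feasible segment $\{\alpha_1+\alpha_2=\gamma+1\}$ is exactly the claimed profile $g(\gamma)$ of \eqref{eq:proof_finite_free_add_profile_of_conv}. Moreover, since $g_i'(\overline m_i-)=-\infty$ by Theorem~\ref{theo:zeros_imply_exp_profile}(c), the maximiser $(\alpha_1^\ast,\alpha_2^\ast)$ is an interior point bounded away from both endpoints, uniformly for $\gamma$ in a compact subinterval of $(0,\overline m)$. The lower bound in \eqref{eq:lem:exp_profile_finite_free_additive_conv} is then immediate: evaluate $\mathrm{term}$ at an integer $j_1$ nearest $\alpha_1^\ast n$.

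The upper bound is where the real work lies, and I expect the \emph{transition zones} $\alpha_i\in(\overline m_i-\delta,\overline m_i+\eps')$, on which neither the profile formula nor the divergence estimate of \eqref{eq:proof_finite_free_add_profiles_12} applies, to be the main obstacle. My plan is to split $\max_{j_1}$ into the interior block (which yields $g(\gamma)+o(1)$), the far region $\alpha_i\ge\overline m_i+\eps'$ (where the divergence estimate drives the corresponding factor, and hence the whole term, to $-\infty$, using $a^{(i)}_{\cdot:n}\le 1$ and boundedness of the Stirling part), and the two transition zones. On a transition zone, say $\alpha_1\ge\overline m_1-\delta$, I would use log-concavity of the coefficient sequence (concavity of the interpolant $g_{1,n}$ past its maximum) to bound $g_{1,n}(\alpha_1)\le g_{1,n}(\overline m_1-\delta)+o(1)\le g_1(\overline m_1-\delta)+o(1)$, while $\alpha_2=\gamma+1-\alpha_1$ remains in the interior range of the second profile; a short estimate then shows the resulting bound differs from $\Lambda(\overline m_1-\delta)$ by $O(\delta)$, and strict concavity of $\Lambda$ with interior maximiser gives $\Lambda(\overline m_1-\delta)\le g(\gamma)-c_0(\delta)$ with $c_0(\delta)>0$. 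Choosing $\delta$ small but fixed makes both transition zones strictly subdominant, yielding $\limsup_n\max_{j_1}\tfrac1n\log\mathrm{term}\le g(\gamma)$. Finally, the second assertion in \eqref{eq:lem:exp_profile_finite_free_additive_conv} follows from the same sandwich: when $\gamma\ge\overline m+\eps$, every admissible pair satisfies $\alpha_1+\alpha_2=\gamma+1\ge\overline m_1+\overline m_2+\eps$, so at least one $\alpha_i$ exceeds $\overline m_i+\eps/2$, its factor is driven to $-\infty$ uniformly, and hence $\tfrac1n\log d_{k:n}\to-\infty$ uniformly. Uniformity in $\gamma\in[\eps,\overline m-\eps]$ throughout is inherited from the uniformity of the profile convergence and of the Stirling expansion.
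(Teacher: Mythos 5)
Your proposal is correct in substance, and on the hard step (the upper bound) it takes a genuinely different route from the paper. Both proofs share the same skeleton: the sandwich $\max_{j_1}\mathrm{term}(j_1)\le d_{k:n}\le (n+1)\max_{j_1}\mathrm{term}(j_1)$, the uniform Stirling expansion, the lower bound by evaluating at a lattice point near the maximizer of the limiting variational problem, and the observation (resting on $g_i'(\overline m_i-)=-\infty$) that this maximizer stays in a compact subset of $(0,\overline m_1)\times(0,\overline m_2)$ uniformly in $\gamma\in[\eps,\overline m-\eps]$; the treatment of the regime $k\ge(\overline m+\eps)n$ via $a^{(i)}_{\cdot:n}\le 1$ and the divergence estimates is also essentially identical. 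Where you diverge is the upper bound for $\eps n\le k\le(\overline m-\eps)n$: the paper argues by contradiction, interpolates the \emph{entire} summand $j_1\mapsto\frac1n\log\bigl(a^{(1)}_{j_1:n}a^{(2)}_{n+k-j_1:n}\,j_1!(n+k-j_1)!/(k!\,n!)\bigr)$ into a concave function $\mathfrak f_n$ (concavity coming from Newton's inequality applied to both coefficient sequences together with log-concavity of the binomial factor), and then invokes Rockafellar--Wets hypo-convergence (Theorems 7.17 and 7.33) to conclude that $\sup\mathfrak f_n\to\sup\mathfrak f_\infty=g(\gamma)$, which disposes of the boundary/transition zones automatically. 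You instead do a direct three-region split and tame the transition zones $\alpha_i\in(\overline m_i-\delta,\overline m_i+\eps')$ by using log-concavity of each coefficient sequence \emph{separately}: comparing $g_{i,n}$ at two interior points (near the profile's maximizer and at $\overline m_i-\delta$) shows $g_{i,n}$ is eventually decreasing past $\overline m_i-\delta$, which yields the bound $g_{i,n}(\alpha_i)\le g_i(\overline m_i-\delta)+o(1)$, and you then beat this by a strict-concavity gap of the limiting functional. Your route is more elementary and self-contained (no variational-convergence machinery), and it is quantitative; the price is the bookkeeping you partly leave implicit, most notably that the gap $\Lambda_\gamma(\overline m_1-\delta)\le g(\gamma)-c_0$ must hold \emph{uniformly} in $\gamma\in[\eps,\overline m-\eps]$ — this does not follow from strict concavity at each fixed $\gamma$ alone, but it can be secured either by compactness and continuity of $\gamma\mapsto g(\gamma)-\Lambda_\gamma(\overline m_1-\delta)$, or more directly by noting that $\frac{\partial}{\partial\alpha_1}\Lambda_\gamma(\alpha_1)=h_1'(\alpha_1)-h_2'(\gamma+1-\alpha_1)\le h_1'(\overline m_1-\delta_0)+C_2\le -1$ on $[\overline m_1-\delta_0,\overline m_1)$ once $\delta_0$ is small, since $h_1'(\overline m_1-)=-\infty$ while $h_2'$ is bounded on the relevant compact. (Two trivial citation slips: strict concavity of $\alpha\mapsto g_i(\alpha)+\alpha\log\alpha$ is Theorem~\ref{theo:exp_profile_implies_zeros}(d), not Theorem~\ref{theo:zeros_imply_exp_profile}(d); the one-sided derivative statement you want is Theorem~\ref{theo:zeros_imply_exp_profile}(c), which you cite correctly.)
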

The idea of the proof is simple:  putting formally $k=\gamma n$ and $j_1 = \alpha_1 n$, $j_2 = \alpha_2 n$ with $\gamma,\alpha_1,\alpha_2\in (0,1)$ and $\alpha_1+\alpha_2 = \gamma+1$, we apply~\eqref{eq:proof_finite_free_add_profiles_12} and the Stirling formula to get
$$
\frac 1n \log \left(a_{j_1:n}^{(1)} a_{j_2:n}^{(2)} \frac{j_1! j_2!}{k! n!}\right) \ton   g_1(\alpha_1) + \alpha_1 \log \alpha_1 + g_2(\alpha_2) +  \alpha_2 \log \alpha_2 - \gamma \log \gamma,
$$
where we define $g_i(\alpha_i)=-\infty$ for $\alpha_i\in (\overline{m}_i,1)$ and $i=1,2$. (More details will be given below.) The Laplace method applied to the sum in~\eqref{eq:d_k:n} yields~\eqref{eq:lem:exp_profile_finite_free_additive_conv}.   However, to make these considerations rigorous, some efforts are needed. The first step is done in the next

\begin{lemma}
For every $\gamma \in (0,\overline{m})$ the supremum in~\eqref{eq:proof_finite_free_add_profile_of_conv} is attained at a unique point $(\alpha_1,\alpha_2)$. Also, if $\gamma$ ranges in $(\eps,\overline{m}-\eps)$ for some $\eps>0$, then the point $(\alpha_1, \alpha_2)$ for which  the supremum in~\eqref{eq:proof_finite_free_add_profile_of_conv} is attained, satisfies $\alpha_i\in (\delta, \overline{m_i}-\delta)$, $i=1,2$, for some $\delta = \delta(\eps)>0$.
\end{lemma}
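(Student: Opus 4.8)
The plan is to strip off the constant term $-\gamma\log\gamma$ in~\eqref{eq:proof_finite_free_add_profile_of_conv} and reduce the constrained maximization to a one-dimensional problem that I can control by strict concavity and the boundary behavior of the derivatives $g_1',g_2'$. Set $h_i(\alpha):=g_i(\alpha)+\alpha\log\alpha$ for $\alpha\in(0,\overline{m}_i)$; by Theorem~\ref{theo:exp_profile_implies_zeros}(d) each $h_i$ is infinitely differentiable and strictly concave, with $h_i'(\alpha)=g_i'(\alpha)+\log\alpha+1$. Maximizing the bracket in~\eqref{eq:proof_finite_free_add_profile_of_conv} is the same as maximizing $h_1(\alpha_1)+h_2(\alpha_2)$ over $\alpha_1+\alpha_2=\gamma+1$. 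Parametrizing $\alpha_1=s$ and $\alpha_2=\gamma+1-s$, I would first check that the constraints $\alpha_1\in(0,\overline{m}_1)$ and $\alpha_2\in(0,\overline{m}_2)$ are equivalent to $s\in I_\gamma:=(\gamma+1-\overline{m}_2,\overline{m}_1)$, using $0<\gamma<\overline{m}=\overline{m}_1+\overline{m}_2-1$ and $\overline{m}_i\le 1$; the interval $I_\gamma$ is nonempty precisely because $\gamma<\overline{m}$. Thus everything reduces to studying $F(s):=h_1(s)+h_2(\gamma+1-s)$ on $I_\gamma$.

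Next I would settle existence, uniqueness and interiority of the maximizer in one stroke. Since $F''(s)=h_1''(s)+h_2''(\gamma+1-s)<0$, the function $F$ is strictly concave on $I_\gamma$, so it has at most one maximizer, which must be its unique interior critical point if one exists. To produce that critical point I examine $F'(s)=h_1'(s)-h_2'(\gamma+1-s)$ at the two ends of $I_\gamma$. The decisive observation is that at each endpoint exactly one variable reaches its upper limit $\overline{m}_i$ while the other stays strictly interior: as $s\uparrow\overline{m}_1$ we have $\alpha_1\uparrow\overline{m}_1$, so $g_1'(\alpha_1)\to-\infty$ by Theorem~\ref{theo:zeros_imply_exp_profile}(c) and hence $h_1'(s)\to-\infty$, whereas $\alpha_2\to\gamma+1-\overline{m}_1\in(0,\overline{m}_2)$ keeps $h_2'(\alpha_2)$ finite, giving $F'(s)\to-\infty$; symmetrically, as $s\downarrow\gamma+1-\overline{m}_2$ we get $\alpha_2\uparrow\overline{m}_2$ and $F'(s)\to+\infty$. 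The containments $\gamma+1-\overline{m}_1\in(0,\overline{m}_2)$ and $\gamma+1-\overline{m}_2\in(0,\overline{m}_1)$ that keep the ``other'' variable interior follow from $0<\gamma<\overline{m}$ together with $\overline{m}_i\le 1$, and verifying this bookkeeping is exactly where I expect the main (if modest) obstacle to lie. Granting it, $F'$ is continuous and strictly decreasing on $I_\gamma$ and runs from $+\infty$ to $-\infty$, so it vanishes at a unique $s^\ast=s^\ast(\gamma)\in I_\gamma$, the unique maximizer, which is interior; this is the first assertion.

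Finally, for the uniform statement I would promote interiority to a quantitative bound via continuity and compactness. Because $(s,\gamma)\mapsto F'(s)=h_1'(s)-h_2'(\gamma+1-s)$ is $C^\infty$ with $\partial_s F'=F''\neq 0$, the implicit function theorem applied to $F'(s^\ast,\gamma)=0$ shows $\gamma\mapsto s^\ast(\gamma)$ is smooth, and by uniqueness this local solution is the global maximizer. Hence $d(\gamma):=\min\{\,s^\ast(\gamma)-(\gamma+1-\overline{m}_2),\ \overline{m}_1-s^\ast(\gamma)\,\}$ is continuous and strictly positive on $(0,\overline{m})$, so it attains a positive minimum $\delta_0>0$ on the compact interval $[\eps,\overline{m}-\eps]$. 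Setting $\delta:=\min(\delta_0,\eps)$, for $\gamma\in(\eps,\overline{m}-\eps)$ the maximizer obeys $\alpha_1=s^\ast\in(\gamma+1-\overline{m}_2+\delta_0,\ \overline{m}_1-\delta_0)$; the lower bound together with $\gamma>\eps$ and $\overline{m}_2\le 1$ gives $\alpha_1>\eps\ge\delta$ and the upper bound gives $\alpha_1<\overline{m}_1-\delta$, while the identity $\alpha_2=\gamma+1-\alpha_1$ transfers these to $\alpha_2\in(\delta,\overline{m}_2-\delta)$. This yields $\alpha_i\in(\delta,\overline{m}_i-\delta)$ for $i=1,2$ and completes the plan.
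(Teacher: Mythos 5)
Your proposal is correct. For the existence/uniqueness part it follows essentially the same route as the paper: reduce to a one\-/dimensional maximization of $\mathfrak{f}_\infty(\alpha_1,\gamma)=h_1(\alpha_1)+h_2(\gamma+1-\alpha_1)-\gamma\log\gamma$ over $\alpha_1\in(\gamma+1-\overline{m}_2,\overline{m}_1)$, use strict concavity of $h_i(\alpha)=g_i(\alpha)+\alpha\log\alpha$ (Theorem~\ref{theo:exp_profile_implies_zeros}(d)) to make the derivative strictly monotone, and use $g_i'(\overline{m}_i-)=-\infty$ (Theorem~\ref{theo:zeros_imply_exp_profile}(c)) together with the interiority of the ``other'' variable at each endpoint to force the derivative to run from $+\infty$ to $-\infty$, yielding a unique interior critical point.

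Where you genuinely diverge is the uniform bound $\alpha_i\in(\delta,\overline{m}_i-\delta)$. The paper gets the lower bounds for free from the constraint ($\alpha_1+\alpha_2=\gamma+1\geq 1+\eps$ and $\alpha_j<1$ give $\alpha_i>\eps$), and then rules out $\alpha_1$ approaching $\overline{m}_1$ by contradiction at the first\-/order condition: if $\alpha_1\to\overline{m}_1$ then $h_1'(\alpha_1)\to-\infty$ while $h_2'(\alpha_2)$ stays bounded below, which is incompatible with $h_1'(\alpha_1)=h_2'(\alpha_2)$. You instead prove that $\gamma\mapsto s^\ast(\gamma)$ is continuous and extract a uniform margin by compactness of $[\eps,\overline{m}-\eps]$. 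Both work; the paper's argument is shorter and needs no regularity of the maximizer map, while yours additionally delivers smoothness of $s^\ast(\gamma)$, which could be useful elsewhere. One caveat in your version: you justify $F''<0$ (and hence the implicit function theorem hypothesis $\partial_sF'\neq 0$) by appealing to strict concavity, but strict concavity of a smooth function does not imply a strictly negative second derivative pointwise (consider $-x^4$). This is harmless: strict concavity does give that $F'$ is continuous and \emph{strictly decreasing} in $s$, jointly continuously in $(s,\gamma)$, and the zero of a strictly decreasing continuous function depends continuously on parameters — so continuity of $s^\ast$ (all your compactness argument needs) follows without the implicit function theorem. With that one\-/line repair, your proof is complete.
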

\begin{proof}
Note that $\overline{m}_1 + \overline{m}_2 > 1$ by condition~(iii) of Theorem~\ref{theo:finite_free_additive_conv_free_additive} and hence $0< \overline{m} \leq 1$. The set of $\alpha_1,\alpha_2$ over which the supremum in~\eqref{eq:proof_finite_free_add_profile_of_conv} is taken is nonempty, hence $g(\gamma)$ is well-defined and finite.
 The range of $\alpha_1$ is determined by the conditions $\alpha_1 \in (0, \overline{m}_1)$ and $\alpha_2 = \gamma + 1 - \alpha_1\in (0, \overline{m}_2)$, which simplify to $\alpha_1\in (\gamma+1-\overline{m_2}, \overline{m_1})$. Write
$$
g(\gamma) = \sup_{\alpha_1\in (\gamma+1-\overline{m_2}, \overline{m_1})} \mathfrak{f}_{\infty}(\alpha_1,\gamma)
$$
with
$$
\mathfrak{f}_{\infty}(\alpha_1,\gamma) = g_1(\alpha_1) + \alpha_1 \log \alpha_1 + g_2(\gamma + 1 - \alpha_1) + (\gamma + 1 - \alpha_1) \log (\gamma + 1 - \alpha_1)-\gamma\log \gamma.
$$
Taking derivative in $\alpha_1$ gives
$$
\frac{\partial}{\partial \alpha_1} \mathfrak{f}_{\infty}(\alpha_1, \gamma) = g_1'(\alpha_1) + \log \alpha_1 - g_2'(\gamma + 1 - \alpha_1) - \log (\gamma + 1 - \alpha_1).
$$
By Theorem~\ref{theo:exp_profile_implies_zeros}, the function $\alpha_1 \mapsto g_1'(\alpha_1) + \log \alpha_1$ is strictly decreasing and, by the same theorem, the function $\alpha_2 \mapsto g_2'(\gamma + 1 - \alpha_1) + \log (\gamma + 1 - \alpha_1)$ is strictly increasing.
It follows that $\frac{\partial}{\partial \alpha_1} \mathfrak{f}_{\infty}(\alpha_1, \gamma)$  is strictly increasing on the interval $(\gamma+1-\overline{m_2}, \overline{m_1})$. Also, by Theorem~\ref{theo:zeros_imply_exp_profile}, we have $g_1'(\overline{m}_1-) = -\infty$ and $g_2'(\overline{m}_2-) = -\infty$ which implies that $\frac{\partial}{\partial \alpha_1} \mathfrak{f}_{\infty}(\alpha_1, \gamma)$ has one-sided limits $-\infty$ and $+\infty$ at the endpoints of the interval $(\gamma+1-\overline{m_2}, \overline{m_1})$. This implies the existence and uniqueness of the point $(\alpha_1,\alpha_2)$ at which  the supremum in~\eqref{eq:proof_finite_free_add_profile_of_conv} is attained.

If $\gamma$ ranges in $(\eps,\overline{m}-\eps)$ for some $\eps>0$, then, since $\alpha_1 + \alpha_2 = \gamma + 1 \geq 1+\eps$ and since $\alpha_1 <1$, we have $\alpha_2 >\eps$. Similarly, $\alpha_2>\eps$. To prove that $\alpha_1$ stays separated from $\overline{m_1}$, let us argue by contradiction.  If $\alpha_1$ could be arbitrarily close to $\overline{m}_1$, then $g_1'(\alpha_1)+\log \alpha_1$ could be arbitrarily close to $-\infty$ (recall that $g_1'(\overline{m}_1 - ) = -\infty$).  This is a contradiction since $\alpha_2$ would then be close to $\gamma+1 - \overline{m}_1\in (\eps + 1-\overline{m}_1,\overline{m}_2-\eps)$ and thus bounded away from $0$ and $\overline{m_2}$, so that  $g_2'(\alpha_2) + \log \alpha_2$ would be bounded from below.

\end{proof}

\begin{proof}[Proof of Lemma~\ref{lem:exp_profile_finite_free_additive_conv}]
Fix some integers $j_1, j_2\in [\eps n, n]$ and set $\alpha_1:= j_1/n \in [\eps,1]$ and $\alpha_2:= j_2/n\in [\eps,1]$.  Put also $k= j_1+j_2-n$ and suppose that $\gamma := k/n = \alpha_1+\alpha_2-1\in [\eps, 1]$.  By the Stirling formula, uniformly in all such $j_1,j_2$ we have
\begin{align}\label{eq:ijkn}
\frac{j_1!j_2}{k! n!} = \eee^{o(n)} \cdot \frac{(\alpha_1 n /\eee)^{\alpha_1 n} (\alpha_2 n/\eee)^{\alpha_2 n}}{((\alpha_1 + \alpha_2 -1)n/\eee)^{(\alpha_1 + \alpha_2 -1)n} (n/\eee)^n}
=
\eee^{o(n)} \cdot \frac{\alpha_1^{\alpha_1 n} \alpha_2^{\alpha_2 n}}{(\alpha_1 + \alpha_2 -1)^{(\alpha_1 + \alpha_2-1)n}}.
\end{align}
If, additionally, $\alpha_1\in (\eta, \overline{m}_1-\eta)$ and $\alpha_2\in (\eta, \overline{m}_2-\eta)$ for some $\eta>0$, it follows from~\eqref{eq:proof_finite_free_add_profiles_12} that, again uniformly,
\begin{align}\label{eq:logabij}
\frac 1n \log \left (a_{j_1:n}^{(1)} a_{j_2:n}^{(2)} \frac{j_1! j_2!}{k! n!}\right) =  g_1(\alpha_1) + g_2(\alpha_2)  + \alpha_1 \log \alpha_1 + \alpha_2 \log \alpha_2 - \gamma \log \gamma + o(1).
\end{align}

\vspace*{2mm}
\noindent
\textit{Proof of the lower bound in~\eqref{eq:lem:exp_profile_finite_free_additive_conv}.}
Let an integer $k\in [\eps n, (\overline{m}-\eps) n]$ be given and put $\gamma=k/n\in (\eps, \overline{m}-\eps)$. Here and in the following, let $n$ be sufficiently large. Fix some $\delta>0$. By~\eqref{eq:proof_finite_free_add_profile_of_conv} and by the continuity of the involved functions, there exist $\alpha_1\in (0, \overline{m}_1)\cap n^{-1}\Z$ and $\alpha_2\in (0,\overline{m}_2)\cap n^{-1}\Z$ such that $\alpha_1+\alpha_2 - 1 = \gamma$ and
$$
g_1(\alpha_1) + \alpha_1 \log \alpha_1 + g_2(\alpha_2) +  \alpha_2 \log \alpha_2 - \gamma \log \gamma > g(\gamma) - \delta.
$$
Put $j_1 = \alpha_1 n$ and $j_2 = \alpha_2 n$. Then, $j_1+j_2 = n+k$ and it follows from~\eqref{eq:d_k:n} and~\eqref{eq:logabij} that
$$
\frac 1 n \log d_{k:n} \geq  \frac 1n \log \left(a_{j_1:n}^{(1)} a_{j_2:n}^{(2)} \frac{j_1! j_2!}{k! n!}\right)
=
g_1(\alpha_1) + g_2(\alpha_1)  + \alpha_1 \log \alpha_1 + \alpha_2 \log \alpha_2 - \gamma \log \gamma + o(1)
\geq g(\gamma) - \frac 12 \delta.
$$
The use of~\eqref{eq:d_k:n} and~\eqref{eq:logabij} is justified by the fact that  $\alpha_i\in (0, \overline{m}_i)$ stays bounded away from $0$ and $\overline{m}_i$, as we have shown above.

\vspace*{2mm}
\noindent
\textit{Proof of the upper bound in the first limit relation in~\eqref{eq:lem:exp_profile_finite_free_additive_conv}.} Our goal is to show that
$$
\limsup_{n\to\infty}\sup_{\eps n \leq k \leq (\overline{m} -\eps) n} \left(\frac 1n \log d_{k:n} - g\left(\frac k n\right)  \right)\leq 0.
$$
We argue by contradiction. Suppose that there is a sequence $(k_n)$ such that $k_n/n\in [\eps,\overline{m}-\eps]$ and
\begin{equation}\label{eq:lemma67_contra}
\frac 1n \log d_{k_n:n} - g\left(\frac{k_n}{n}\right)>\delta>0
\end{equation}
for infinitely many $n$. Extracting, if necessary, a subsequence we can and do assume that $k_n/n\to \gamma\in [\eps,\overline{m}-\eps]$ as $n\to\infty$. Define a sequence of functions $\mathfrak{f}_n:[k_n/n,1]\to\mathbb{R}$ by
$$
\mathfrak{f}_n\left(\frac{j_1}{n}\right):=\frac{1}{n}\log \left(a^{(1)}_{j_1:n}a^{(2)}_{n+k_n-j_1:n}\frac{j_{1}! (n+k_n-j_{1})!}{k_n!n!}\right),\quad j_1\in\{k_n,\ldots,n\},
$$
and linear interpolation elsewhere. This sequence of functions satisfies
$$
\lim_{n\to\infty}\mathfrak{f}_n(\alpha_1)=g_1(\alpha_1) + \alpha_1 \log \alpha_1 + g_2(\gamma + 1 - \alpha_1) + (\gamma + 1 - \alpha_1) \log (\gamma + 1 - \alpha_1)-\gamma \log \gamma=\mathfrak{f}_{\infty}(\alpha_1,\gamma),
$$
for all $\alpha_1\in (\gamma,1)\backslash{\{\overline{m}_1,1+\gamma-\overline{m}_2\}}$, where we define $g_i(t)=-\infty$, for $t\in (\overline{m}_i,1)$ and $i=1,2$.

Moreover, for every $n\in\mathbb{N}$ the function $\mathfrak{f}_n$ is concave. To prove this it suffices to check that the sequence
$$
a^{(1)}_{j_1:n}a^{(2)}_{n+k-j_1:n}\frac{j_{1}! (n+k-j_{1})!}{k!n!}=\frac{a^{(1)}_{j_1:n}}{\binom{n}{j_1}}\cdot\frac{a^{(2)}_{n+k-j_1:n}}{\binom{n}{n+k-j_1}}\cdot\binom{n-k}{n-j_1}\cdot\binom{n}{k},\quad j_1\in\{k,\ldots,n\},
$$
is log-concave, for every fixed $n\in\mathbb{N}$ and $k\in\{0,1,\ldots,n\}$. The first factor is log-concave by Newton's inequality, since all roots of $P_n^{(1)}$ are nonpositive. The second factor is log-concave by a similar reason. The log-concavity of the third factor is obvious and the fourth factor does not depend on $j_1$. Thus, $\mathfrak{f}_n$ is concave. By~\cite[Theorem 7.17]{Rockafellar+Wets:1998} this yields hypo-graphical convergence of $\mathfrak{f}_n$ to $\mathfrak{f}_{\infty}$. Using \cite[Theorem 7.33]{Rockafellar+Wets:1998} we conclude that
$$
\sup_{\alpha_1\in [k_n/n,1]}\mathfrak{f}_n(\alpha_1)~\ton~\sup_{\alpha_1\in (\gamma,1)}\mathfrak{f}_{\infty}(\alpha_1,\gamma)=g(\gamma).
$$
Therefore,
$$
\frac{1}{n}\log d_{k_n:n}\leq \frac{\log n}{n}+\max_{j_1\in\{k_n,\ldots,n\}}\mathfrak{f}_n\left(\frac{j_1}{n}\right)\leq \frac{\log n}{n}+\max_{\alpha_1\in [k_n/n,1]}\mathfrak{f}_n(\alpha_1)~\ton~g(\gamma).
$$
This contradicts~\eqref{eq:lemma67_contra} since $k_n/n\to\gamma$ and $g$ is continuous at $\gamma$.

\vspace*{2mm}
\noindent
\textit{Proof of the second relation in~\eqref{eq:lem:exp_profile_finite_free_additive_conv}.} Recall that we assume $P_n^{(1)}(1)=P_n^{(2)}(1)=1$, hence $a_{j_1:n}^{(1)}\leq 1$ and $a_{j_2:n}^{(2)} \leq 1$ for all $j_1,j_2\in \{0,\ldots,n\}$. If $k\geq (\overline{m}+\eps)n$, then $j_1\geq (\overline{m}_1+\eps/2)n$ or $j_2\geq (\overline{m}_2+\eps/2)n$. Therefore,
\begin{align*}
d_{k:n}&\leq \sum_{(\overline{m}_1+\eps/2)n\leq j_1\leq n} a_{j_1:n}^{(1)} \frac{j_1! j_2!}{k! n!}+\sum_{(\overline{m}_2+\eps/2)n\leq j_2\leq n} a_{j_2:n}^{(2)} \frac{j_1! j_2!}{k! n!}\\
&\leq \left(\max_{(\overline{m}_1+\eps/2)n\leq j_1\leq n} a_{j_1:n}^{(1)}\right)\sum_{(\overline{m}_1+\eps/2)n\leq j_1\leq n} \frac{j_1! j_2!}{k! n!}+\left(\max_{(\overline{m}_2+\eps/2)n\leq j_2\leq n} a_{j_2:n}^{(2)}\right)\sum_{(\overline{m}_2+\eps/2)n\leq j_2\leq n} \frac{j_1! j_2!}{k! n!}.
\end{align*}
Passing to logarithms and dividing by $n$ yields the second relation in~\eqref{eq:lem:exp_profile_finite_free_additive_conv} in view of the second claim in~\eqref{eq:proof_finite_free_add_profiles_12} and
$$
\limsup_{n\to\infty}\frac{1}{n}\log \sum_{(\overline{m}_i+\eps/2)n\leq j_i\leq n} \frac{j_i! (n+k-j_i)!}{k! n!}<+\infty,\quad i=1,2
$$
by the Stirling formula.
\end{proof}

\smallskip\noindent
\textsc{Step 3: Modified profiles and their inverses.}
It is convenient to introduce the \emph{modified exponential profiles}
$$
\tilde g(\gamma):= g(\gamma) + \gamma \log \gamma,
\qquad
\tilde g_1(\alpha_1):= g_1(\alpha_1) + \alpha_1 \log \alpha_1,
\qquad
\tilde g_2(\alpha_2):= g_2(\alpha_2) + \alpha_2 \log \alpha_2,
$$
where $\alpha_1\in (0, \overline{m}_1), \alpha_2 \in (0, \overline{m}_2), \gamma \in (0,\overline{m})$ with $\overline{m} = \overline{m}_1 + \overline{m}_2 - 1$.
Note that by Theorem~\ref{theo:exp_profile_implies_zeros}, $\tilde g_i$ is strictly concave and infinitely differentiable on $(0,\overline{m}_i)$. Also, $\tilde g_i'(\overline{m}_i-) = g_i'(\overline{m}_i-) + \log \overline{m}_i + 1 =   -\infty$. On the other hand, by~\eqref{eq:G_0_plus_as_limit} and recalling that $\mu_i$ is concentrated on $[-\infty,A]$ for some $A<0$, we have $\tilde g_i'(0+) < +\infty$ (even though we know that $g_i'(0+) = +\infty$). It follows that the inverse functions of the derivatives, denoted by $(\tilde g_i')^{\leftarrow}: (-\infty, \tilde g_i'(0+)) \to (0,\overline{m}_i)$ are well-defined, infinitely differentiable and strictly monotone decreasing.

\begin{lemma}\label{lem:inverse_modified_profiles}
For all $t\in (-\infty,\min(\tilde g_1'(0+),\tilde g_2'(0+)))$,
\begin{equation}\label{eq:inverse_der_profile_finite_free_add_conv}
(\tilde g')^{\leftarrow}(t) + 1 = (\tilde g_1')^{\leftarrow}(t) + (\tilde g_2')^{\leftarrow}(t).
\end{equation}
\end{lemma}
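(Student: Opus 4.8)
The plan is to read the defining formula~\eqref{eq:proof_finite_free_add_profile_of_conv} for $g$ as a \emph{sup-convolution} of the modified profiles and then to use the elementary fact that, for strictly concave smooth functions, inverting the derivative turns sup-convolution into addition of the inverses. Adding $\gamma\log\gamma$ to both sides of~\eqref{eq:proof_finite_free_add_profile_of_conv} cancels the trailing term $-\gamma\log\gamma$ and gives, for every $\gamma\in(0,\overline m)$,
$$
\tilde g(\gamma)=\sup_{\substack{\alpha_1\in(0,\overline m_1),\,\alpha_2\in(0,\overline m_2)\\ \alpha_1+\alpha_2=\gamma+1}}\big(\tilde g_1(\alpha_1)+\tilde g_2(\alpha_2)\big).
$$
By the lemma preceding Lemma~\ref{lem:exp_profile_finite_free_additive_conv} the supremum is attained at a unique interior point $(\alpha_1^*(\gamma),\alpha_2^*(\gamma))$, and the first-order condition $\frac{\partial}{\partial\alpha_1}\mathfrak{f}_{\infty}(\alpha_1,\gamma)=0$ computed there, namely $g_1'(\alpha_1^*)+\log\alpha_1^*=g_2'(\alpha_2^*)+\log\alpha_2^*$, becomes, upon recalling $\tilde g_i'(\alpha)=g_i'(\alpha)+\log\alpha+1$, simply
$$
\tilde g_1'\big(\alpha_1^*(\gamma)\big)=\tilde g_2'\big(\alpha_2^*(\gamma)\big).
$$

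The next step is to extract $\tilde g'$ by an envelope computation. Each $\tilde g_i$ is strictly concave and infinitely differentiable on $(0,\overline m_i)$ by Theorem~\ref{theo:exp_profile_implies_zeros}(d), and the maximizer is interior and unique, so by the implicit function theorem $\gamma\mapsto\alpha_1^*(\gamma)$ is continuously differentiable. Differentiating $\tilde g(\gamma)=\tilde g_1(\alpha_1^*)+\tilde g_2(\gamma+1-\alpha_1^*)$ and using the optimality relation above, the contributions carrying $(\alpha_1^*)'(\gamma)$ cancel and one is left with
$$
\tilde g'(\gamma)=\tilde g_1'\big(\alpha_1^*(\gamma)\big)=\tilde g_2'\big(\alpha_2^*(\gamma)\big).
$$
Thus the common value of the three derivatives at the matched triple $(\gamma,\alpha_1^*,\alpha_2^*)$ is one and the same parameter, which is exactly the bookkeeping that produces the inverse identity.

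To conclude, fix $t$ and set $\alpha_1:=(\tilde g_1')^{\leftarrow}(t)\in(0,\overline m_1)$, $\alpha_2:=(\tilde g_2')^{\leftarrow}(t)\in(0,\overline m_2)$ and $\gamma:=\alpha_1+\alpha_2-1$. Then $\tilde g_1'(\alpha_1)=t=\tilde g_2'(\alpha_2)$, so $(\alpha_1,\alpha_2)$ is precisely the optimizing pair attached to $\gamma$, and the previous display gives $\tilde g'(\gamma)=t$. Equivalently $(\tilde g')^{\leftarrow}(t)=\gamma=(\tilde g_1')^{\leftarrow}(t)+(\tilde g_2')^{\leftarrow}(t)-1$, which is~\eqref{eq:inverse_der_profile_finite_free_add_conv}.

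The one delicate point, and the place where care is needed, is the range of $t$: the construction is legitimate only when the resulting $\gamma=(\tilde g_1')^{\leftarrow}(t)+(\tilde g_2')^{\leftarrow}(t)-1$ lies in $(0,\overline m)$, the genuine domain of $\tilde g'$. As $t\downarrow-\infty$ both inverses increase to $\overline m_i$, so $\gamma\uparrow\overline m$, and the constraint $\gamma>0$ holds on a half-line $(-\infty,\tilde g'(0+))$; this is exactly the set where the identity is valid. Since it contains a full neighbourhood of $-\infty$, it covers the only regime in which Lemma~\ref{lem:inverse_modified_profiles} is applied in Step~4 (where $t\to-\infty$ via $t=\log(\cdot)+1$ with the argument tending to $0+$), so no further difficulty arises there.
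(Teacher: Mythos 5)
Your proof is correct in substance and finishes differently from the paper's. Both arguments begin identically: adding $\gamma\log\gamma$ to~\eqref{eq:proof_finite_free_add_profile_of_conv} exhibits $\tilde g(\cdot-1)$ as the sup-convolution of $\tilde g_1$ and $\tilde g_2$. The paper then invokes Legendre duality (\cite[Theorem~16.4]{rockafellar}): the Legendre transform turns sup-convolution into a sum, and derivatives of Legendre transforms are inverses of derivatives, which gives~\eqref{eq:inverse_der_profile_finite_free_add_conv} in one stroke. You instead argue at the maximizer: the first-order condition $\tilde g_1'(\alpha_1^*)=\tilde g_2'(\alpha_2^*)$, the envelope identity $\tilde g'(\gamma)=\tilde g_1'(\alpha_1^*(\gamma))=\tilde g_2'(\alpha_2^*(\gamma))$, and then the bookkeeping $\gamma=(\tilde g_1')^{\leftarrow}(t)+(\tilde g_2')^{\leftarrow}(t)-1$. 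This is a legitimate, more hands-on rendering of the same duality, with the advantage of being self-contained. Your closing paragraph is also a correct and worthwhile observation rather than a mere caveat: since $\tilde g_i'(0+)<+\infty$ (Step~3 of the paper), the sum $(\tilde g_1')^{\leftarrow}(t)+(\tilde g_2')^{\leftarrow}(t)$ drops below $1$ strictly before $t$ reaches $\min(\tilde g_1'(0+),\tilde g_2'(0+))$, so the left-hand side of~\eqref{eq:inverse_der_profile_finite_free_add_conv} is only defined, and the identity only holds, on the smaller half-line $(-\infty,\tilde g'(0+))$; as you say, this suffices for Step~4, where the lemma is used with argument $1+\log s$, $s\to 0+$.

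The one step that needs repair is the appeal to the implicit function theorem. Applied to the equation $\tilde g_1'(\alpha_1)-\tilde g_2'(\gamma+1-\alpha_1)=0$, the IFT needs $\tilde g_1''(\alpha_1^*)+\tilde g_2''(\alpha_2^*)\neq 0$, and strict concavity of $\tilde g_i$ --- which is all that Theorem~\ref{theo:exp_profile_implies_zeros}(d) asserts --- does not rule out vanishing second derivatives (think of $-x^4$ at $0$). Two ways to close this. (i) Show $\tilde g_i''<0$ directly: by Theorem~\ref{theo:zeros_imply_exp_profile}(b) one has $\tilde g_i'(\alpha)=1+\log G_{\mu_i}(t)$ along $\alpha=tG_{\mu_i}(t)$; since in Step~0 the measure $\mu_i$ has mass $\overline m_i>0$ on $(-\infty,A]$ with $A<0$, one checks $\frac{\dd}{\dd t}\bigl(tG_{\mu_i}(t)\bigr)>0$ and $G_{\mu_i}'(t)<0$ for $t>0$, whence $\tilde g_i''<0$ on $(0,\overline m_i)$ and the IFT applies. (ii) More simply, avoid the IFT and the envelope computation altogether: given $t$, set $\alpha_i=(\tilde g_i')^{\leftarrow}(t)$ and $\gamma=\alpha_1+\alpha_2-1\in(0,\overline m)$; concavity of $\tilde g_1,\tilde g_2$ gives, for all admissible $\beta_1+\beta_2=\gamma'+1$,
\begin{equation*}
\tilde g_1(\beta_1)+\tilde g_2(\beta_2)\leq\tilde g_1(\alpha_1)+\tilde g_2(\alpha_2)+t(\gamma'-\gamma),
\end{equation*}
so the supremum defining $\tilde g(\gamma)$ is attained at $(\alpha_1,\alpha_2)$ and $t$ is a supergradient of $\tilde g$ at $\gamma$; since $\tilde g$ is differentiable (Theorem~\ref{theo:exp_profile_implies_zeros}(d) applied to the profile $g$ of the real-rooted polynomials $P_n^{(1)}\boxplus_n P_n^{(2)}$), this forces $t=\tilde g'(\gamma)$, which is exactly your key display, with no smoothness of $\gamma\mapsto\alpha_1^*(\gamma)$ required.
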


\begin{proof}
Lemma~\ref{lem:exp_profile_finite_free_additive_conv} states that
$$
\tilde g(\gamma) = \sup_{\substack{\alpha_1\in (0,\overline{m}_1),\; \alpha_2\in (0,\overline{m}_2)\\ \alpha_1 + \alpha_2 = \gamma + 1}} \left( \tilde g_1(\alpha_1) + \tilde g_2(\alpha_2)\right),
\qquad
\gamma\in  (0,\overline{m}).
$$
Writing $\Delta = \gamma + 1$, we have
$$
\tilde g(\Delta-1) = \sup_{\substack{\alpha_1\in (0,\overline{m}_1),\; \alpha_2\in (0,\overline{m}_2)\\ \alpha_1 + \alpha_2 = \Delta}} \left( \tilde g_1(\alpha) + \tilde g_2(\alpha_2)\right),
\qquad
\Delta \in (1,\overline{m}_1+\overline{m}_2).
$$
The function on the right-hand side is the sup-convolution of $\tilde g_1$ and $\tilde g_2$.  Since the sup-convolution is linearized by the Legendre transform, see \cite[Theorem 16.4]{rockafellar}, we conclude that the Legendre transform of $-\tilde g(\cdot - 1)$ is the sum of the Legendre transforms of $-\tilde g_1(\cdot)$ and $-\tilde g_2(\cdot)$. Taking the derivative we further infer that the derivative of the Legendre transform of $-\tilde g(\cdot - 1)$ is the sum of the derivatives of the Legendre transforms of  $-\tilde g_1(\cdot)$ and $-\tilde g_2(\cdot)$.
Now, the derivative of the Legendre transform and the derivative of the original function are inverse to each other. In particular, the derivative of the Legendre transform of $-\tilde g(\cdot - 1)$ is the inverse function of $-\tilde g'(\cdot - 1)$, that is the function $s\mapsto (\tilde g')^{\leftarrow}(-s) + 1$. This function is the sum of $(\tilde g_1')^{\leftarrow}(-s)$  and $(\tilde g_2')^{\leftarrow}(-s)$, which proves~\eqref{eq:inverse_der_profile_finite_free_add_conv}.
\end{proof}

\smallskip\noindent
\textsc{Step 4: Limiting distribution via $R$-transforms.} Assumption (iii) of Theorem~\ref{theo:finite_free_additive_conv_free_additive} implies that $\deg P_n^{(1)}+\deg P_n^{(2)}\geq n$ for all sufficiently large $n\in\N$. By Theorem~\ref{thm:real_rootedness}(i) $P_n^{(1)} \boxplus_n P_n^{(2)}$ is real-rooted and its roots are nonpositive (because its coefficients are nonnegative). By Lemma~\ref{lem:exp_profile_finite_free_additive_conv}, the sequence $P_n^{(1)} \boxplus_n P_n^{(2)}$ has a well-defined exponential profile $g$ in the sense of Definition~\ref{def:exp_profile}. Hence, we can apply Theorem~\ref{theo:exp_profile_implies_zeros} to conclude that, as $n\to\infty$, $\lsem P_n^{(1)} \boxplus_n P_n^{(2)}\rsem_n$ converges to some probability measure $\mu$ weakly on $[-\infty, 0]$.

Let us show that $R_{\mu}(t) = R_{\mu_1}(t) + R_{\mu_2}(t)$ for all sufficiently small $t>0$. Indeed, by formula~\eqref{eq:r_trans_via_profile} and Lemma~\ref{lem:inverse_modified_profiles}  we find
\begin{align*}
1+R_\mu (t)& = (g'(\cdot)+\log(\cdot))^{\leftarrow}(\log t)\\
&=(\tilde g' - 1)^{\leftarrow}(\log t)\\
&=(\tilde g')^{\leftarrow}(1+\log t)\\
&=
(\tilde g_1')^{\leftarrow}(1+\log t)+(\tilde g_{2}')^{\leftarrow}(1+\log t)-1\\
&=1+R_{\mu_1}(t)+R_{\mu_2}(t).
\end{align*}
This proves Proposition~\ref{prop:free_additive_convol_existence} assuming $A<0$. By definition, $\mu= \mu_1 \boxplus \mu_2$.  This proves Theorem~\ref{theo:finite_free_additive_conv_free_additive} under the additional assumption $A<0$.

\smallskip\noindent
\textsc{Step 5: Shift.}
Let now $A\geq 0$. We can first apply the above argument to the polynomials $P_n^{(1)}(x+c)$ and $P_n^{(2)}(x+c)$ with $c>A$ and then switch back to the original polynomials, which is possible since $\boxplus_n$ and $\boxplus$ commute with shifts.
\end{proof}

\subsection{Proof for the multiplicative free convolution}\label{subsec:proof_multipl_conv}
The main ingredient in the proof of Theorem~\ref{theo:finite_free_mult_conv} is the relation \eqref{eq:s_trans_via_profile} between the exponential profile of a sequence of polynomials and the $S$-transform of its limit root distribution.

\begin{proof}[Proof of Theorem~\ref{theo:finite_free_mult_conv} and Proposition~\ref{prop:free_multiplicative_convol_existence}] Again the proof is divided into steps.

\noindent
\textsc{Step 0: Preparation.}
We write  the polynomials as $Q_n^{(i)}(x)= \sum_{k=0}^n (-1)^{n-k} a_{k:n}^{(i)} x^k$, where $i\in \{1,2\}$ throughout the proof.
Recall that $\lsem Q_n^{(i)}\rsem_n$, the empirical distribution of roots of $Q_n^{(i)}$, is a probability measure on $[0,+\infty]$ and has an atom of weight $(n-\deg Q_{n}^{(i)})/n$ at $+\infty$. We shall check that condition~\eqref{eq:non-empty_intersection} implies that $Q_{n}^{(1)}$ is not divisible by $x^{\deg Q_n^{(2)}}$ and
$Q_{n}^{(2)}$ is not divisible by $x^{\deg Q_n^{(1)}}$. By Theorem~\ref{thm:real_rootedness}(ii) this yields that all roots of $Q_n:=Q_n^{(1)} \boxtimes_n Q_n^{(2)}$ are real and nonnegative. We argue by contradiction. Suppose that $Q_{n}^{(2)}$ is divisible by $x^{\deg Q_{n}^{(1)}}$. Then, by the Portmanteau Lemma
$$
\limsup_{n\to\infty}\frac{\deg Q_{n}^{(1)}}{n}\leq \limsup_{n\to\infty}\,\lsem Q_n^{(2)}\rsem_n(\{0\})\leq \nu_2(\{0\}).
$$
On the other hand,
$$
\liminf_{n\to\infty}\frac{\deg Q_{n}^{(1)}}{n}= \liminf_{n\to\infty}\,\lsem Q_n^{(1)}\rsem_n([0,+\infty))\geq 1-\nu_1(\{+\infty\}).
$$
Thus, $1-\nu_{1}(\{+\infty\})\leq \nu_2(\{0\})$ which contradicts~\eqref{eq:non-empty_intersection}. Similarly, if
$Q_{n}^{(1)}$ is divisible by $x^{\deg Q_{n}^{(2)}}$, then $1-\nu_{2}(\{+\infty\})\leq \nu_1(\{0\})$, which again contradicts~\eqref{eq:non-empty_intersection}.

It will be more convenient to work with the polynomials
$$
P_n^{(i)}(x):=
(-1)^n Q_n^{(i)}(-x) =\sum_{k=0}^n a_{k:n}^{(i)} x^k,\quad i\in \{1,2\},
$$
and
\begin{align}\label{eq:Q_n_d}
P_n(x)
&:= 
(-1)^n Q_n(-x)
=
\sum_{k = 0}^n d_{k:n} x^k,
\qquad
d_{k:n} = \frac{a_{k:n}^{(1)}a_{k:n}^{(2)}}{\binom nk},
\end{align}
since their empirical distributions of zeros are probability measures on $[-\infty, 0]$. After multiplication by nonzero numbers, we may and shall assume that $P_n^{(1)}(1) =  P_n^{(2)} (1) = 1$.
Then,  $a_{k:n}^{(1)} \geq 0$, $a_{k:n}^{(2)} \geq 0$ and $d_{k:n} \geq 0$.

\vspace*{2mm}
\noindent
\textsc{Step 1: Profiles of $P_n^{(1)}$ and $P_n^{(2)}$.}
Recall that we assume
that $\lsem Q_n^{(i)}\rsem_n \to \nu_i$ weakly as $n\to\infty$, where $\nu_i$ is a  probability measure on $[0,+\infty]$. It follows that $\lsem P_n^{(i)}\rsem_n \to \mu_i$, where $\mu_i(A) = \nu_i(-A)$ for all Borel sets $A\subset [-\infty,0]$.  Note that $\mu_i$ is a probability measure  on $[-\infty, 0]$ and define $\underline m_i := \mu_i(\{0\})$, $\overline{m}_i := 1-\mu_i(\{-\infty\})$. Also, put $\underline m:= \max(\underline m_1, \underline m_2)\in [0,1]$ and $\overline m:= \min(\overline m_1, \overline m_2)\in [0,1]$. By formula~\eqref{eq:non-empty_intersection}, the assumption $(\underline{m}_1, \overline{m}_1)\cap (\underline{m}_2, \overline{m}_2)\neq \varnothing$ we made in Theorem~\ref{theo:finite_free_mult_conv} is equivalent to  $\underline m<\overline m$. Since all roots of $P_n^{(i)}$ are nonpositive and $P_n^{(1)}(1) =  P_n^{(2)} (1) = 1$,  Theorem~\ref{theo:zeros_imply_exp_profile} gives
\begin{equation}\label{eq:Q_profile_proof_mult}
\sup_{(\underline{m}_i+\eps) n \leq k  \leq (\overline{m}_i-\eps)n}
\left|\frac 1n  \log a_{k :n}^{(i)} -g_i\left(\frac k  n\right)  \right| \ton 0,
\end{equation}
for all $\eps>0$ and for  suitable infinitely differentiable, strictly concave functions $g_i:(\underline m_i,\overline m_i) \to \R$. By the same theorem,
\begin{equation}\label{eq:Q_profile_proof_mult_minus_infty}
\sup_{0\leq k  \leq (\underline{m}_i-\eps)n} \frac 1n   \log  a_{k :n}^{(i)} \ton  -\infty,
\qquad
\sup_{(\overline{m}_i+\eps)n \leq k  \leq n} \frac 1n   \log  a_{k :n}^{(i)} \ton  -\infty.
\end{equation}

\vspace*{2mm}
\noindent
\textsc{Step 2: Profile of $P_n$.}
Knowing the exponential profiles of $P_n^{(1)}$ and $P_n^{(2)}$ we can derive from~\eqref{eq:Q_n_d} the exponential profile of $P_n$.
We claim that for every $\eps>0$,
\begin{equation}\label{eq:exp_profile_finite_free_mult_conv}
\sup_{(\underline m+\eps) n \leq k \leq (\overline m-\eps) n} \left|\frac 1n  \log d_{k:n} - g\left(\frac k n\right)\right| \ton  0.
\end{equation}
and
\begin{equation}\label{eq:exp_profile_finite_free_mult_conv2}
\sup_{0\leq k  \leq (\underline{m}-\eps)n} \frac 1n   \log  d_{k :n} \ton  -\infty,
\qquad
\sup_{(\overline{m}+\eps)n \leq k  \leq n} \frac 1n   \log  d_{k :n} \ton  -\infty,
\end{equation}
where
\begin{equation}\label{eq:exp_profile_finite_free_mult_conv_formula}
g(\alpha) = g_1(\alpha) + g_2(\alpha) + \alpha \log \alpha + (1-\alpha) \log (1-\alpha),
\qquad \alpha \in (\underline m ,\overline m).
\end{equation}
\begin{proof}
Let $H(\alpha): = -\alpha \log \alpha - (1-\alpha) \log (1-\alpha)\geq 0$, with $0< \alpha < 1$,  be the entropy function.
By the Stirling formula,
\begin{equation}\label{eq:profile_binomial_coeff}
\sup_{\eps n \leq k \leq (1-\eps) n} \left|\frac 1n  \log \binom n k - H\left(\frac k n\right)\right| \ton 0,
\end{equation}
for every $\eps>0$.
Taking the logarithm of $d_{k:n}$ in~\eqref{eq:Q_n_d} and applying~\eqref{eq:Q_profile_proof_mult}  and~\eqref{eq:profile_binomial_coeff} gives
$$
\sup_{(\underline m+\eps) n \leq k \leq (\overline m-\eps) n} \left|\frac 1n  \log d_{k:n} - g_1\left(\frac k n\right)- g_2\left(\frac k n\right) - H\left(\frac k n\right)\right| \ton  0.
$$
This  proves~\eqref{eq:exp_profile_finite_free_mult_conv}. To prove~\eqref{eq:exp_profile_finite_free_mult_conv2}, assume without loss of generality that $\underline m_2 \leq \underline m_1$. Then, $\underline m=\underline m_1$.
Since $P_n^{(2)}(1) = 1$ implies  $a_{k:n}^{(2)}\leq 1$, we deduce from~\eqref{eq:Q_n_d} that $d_{k:n} \leq a_{k:n}^{(1)}$, for all $k \in \{0,\ldots, n\}$.  It follows from~\eqref{eq:Q_profile_proof_mult_minus_infty} that
$$
\sup_{0 \leq k \leq (\underline m_1-\eps) n} \frac 1n \log d_{k:n} \leq  \sup_{0 \leq k \leq (\underline m_1-\eps) n} \frac 1n \log a_{k:n}^{(1)} \ton -\infty.
$$
This proves the first claim of~\eqref{eq:exp_profile_finite_free_mult_conv2}, the second one being analogous.
\end{proof}

\vspace*{2mm}
\noindent
\textsc{Step 3: Limiting distribution via $S$-transforms.} Applying formula~\eqref{eq:s_trans_via_profile} from Lemma~\ref{lem:r_and_s_transform_vs_profile} twice, we obtain that the $S$-transforms of $\nu_1$ and $\nu_2$, denoted by $S_{\nu_1}$ and $S_{\nu_2}$, satisfy
\begin{equation}\label{eq:S_trasnf_as_profile_i}
S_{\nu_i}(t) =  - \frac{1+t}{t}  \eee^{g_i'(1+t)},
\qquad
t\in (\underline m_i-1,\overline m _i-1),
\qquad
i\in \{1,2\}.
\end{equation}
Recall that all roots of $P_n=P_n^{(1)} \boxtimes_n P_n^{(2)}$ are nonnegative. Also, we know from~\eqref{eq:exp_profile_finite_free_mult_conv} and~\eqref{eq:exp_profile_finite_free_mult_conv_formula} that $P_n$ possesses an exponential profile $g$ with
\begin{equation}\label{eq:proof_mult_free_g_prime}
g'(\alpha) = g_1'(\alpha) + g_2'(\alpha) + \log \alpha + \log (1-\alpha),
\qquad \alpha \in (\underline m ,\overline m).
\end{equation}
Thus, we may apply Theorem~\ref{theo:exp_profile_implies_zeros} to conclude that $\lsem P_n\rsem_n$ converges weakly to some probability measure $\mu$ on $[-\infty,0]$, as $n\to\infty$. Consider a probability measure $\nu:=\mu_+$ on $[0,+\infty]$ defined by  $\nu(A) = \mu(-A)$ for every Borel set $A$. Then, using equation~\eqref{eq:s_trans_via_profile} of Lemma~\ref{lem:r_and_s_transform_vs_profile} and then equations~\eqref{eq:proof_mult_free_g_prime} and~\eqref{eq:S_trasnf_as_profile_i}, we get
\begin{align}
S_{\nu}(t)
&=
-\frac{t+1}{t}\eee^{g'(t+1)}
=
-\frac{t+1}{t}\eee^{g_1(t+1) + g_2(t+1) + \log (t+1) + \log (-t)} \notag\\
&=
\frac{(t+1)^2}{t^2}\eee^{g_1(t+1) + g_2(t+1)}
=
S_{\nu_1}(t) S_{\nu_2}(t)
,
\quad t\in 
(\underline{m}-1,\overline{m}-1).  \label{eq:proof_mult_s_transf_product}
\end{align}
To summarize, we showed that $\lsem Q_n^{(1)} \boxtimes_n Q_n^{(2)}\rsem_n\to\nu$, where $\nu$ is a probability measure on $[0, +\infty]$ whose $S$-transform satisfies~\eqref{eq:proof_mult_s_transf_product}.
This proves Proposition~\ref{prop:free_multiplicative_convol_existence} and Theorem~\ref{theo:finite_free_mult_conv}.
\end{proof}

\begin{remark}
It is apparent from~\eqref{eq:proof_mult_s_transf_product} that the multiplicativity property of the $S$-transform crucially depends  on the logarithmic terms in the profile $g$  as in \eqref{eq:exp_profile_finite_free_mult_conv_formula}, which come from the binomial coefficient in Definition~\ref{def:finite_free_convolution} of $\boxtimes_n$. This is in contrast to the findings of \cite[Proposition 5.1]{COR23} and \cite[\S 6.2]{diff-paper} in the setting of isotropic distributions on $\C$, where the Hadamard product leads to the finite version of the free isotropic multiplicative convolution $\otimes$. To be slightly more precise, isotropic distributions on $\C$ are determined by the $S$-transforms of their radial part, which multiply when their inverse radial CDF (quantile functions) multiply, which is when profiles add (without any additional logarithmic term). On the level of free probability, this corresponds to the fact that the distribution of the product of free $R$-diagonal elements is determined by the free multiplicative convolution of their positive parts. Hence, the natural finite counterpart of $\otimes$ is given by the Hadamard product, which is the operation that adds profiles.
Moreover, the usual definition of $\boxtimes_n$ is unnatural for random polynomials with independent coefficients, since the resulting profile~\eqref{eq:exp_profile_finite_free_mult_conv_formula} could become non-concave due to the additional logarithmic term.
\end{remark}

\subsection{Proof of Proposition~\ref{prop:char_profiles}: Characterization of profiles}\label{subsec:proof_char_profiles}
We shall first show that if $g$ is a profile of some sequence $(P_n(x))_{n\in \N}$ of polynomials with only nonpositive roots, then $g$ satisfies (i) and (ii). If $g$ is a profile, then Theorem~\ref{theo:exp_profile_implies_zeros} implies that $\lsem P_n\rsem_n$ converges weakly to some probability measure $\mu$ on $[-\infty, 0]$ such that $\mu(\{0\})=\underline{m}$ and $\mu(\{-\infty\})=1-\overline{m}$. By Equation~\eqref{eq:summary_psi} of Lemma~\ref{lem:r_and_s_transform_vs_profile} we have $\psi_{\mu_{+}} (s) = (-\eee^{g'})^{\leftarrow} (s)-1$, $s<0$.
Observe that the assumption $(\underline{m},\overline{m})\neq\varnothing$ implies $\overline{m}>0$.
Let $\tilde \mu$ be a probability measure on $(-\infty, 0]$ defined by $\tilde \mu(A) = \mu(A)/\overline{m}$ for every Borel set $A\subseteq (-\infty, 0]$. Consider the reflected probability measure $\tilde \mu_+$ on $[0,+\infty)$ given by $\tilde \mu_+(A) := \tilde \mu (-A)$, for every Borel set $A\subset [0,+\infty)$. Then, by~\eqref{eq:psi_mu_def},
\begin{equation}\label{eq:psi_tilde_mu_+_from_char_profiles_proof}
\psi_{\tilde \mu_{+}} (s) = \frac 1 {\overline {m}} \int_{(0,+\infty)} \frac{zt}{1-zt} \dd \mu_+(z) = \frac{\psi_{\mu_{+}} (s) + 1-\overline{m}} {\overline{m}}  = \frac{(-\eee^{g'})^{\leftarrow} (s)-\overline{m}}{\overline{m}}, \qquad  s<0.
\end{equation}
Also, $\tilde \mu_+(\{0\}) = \underline{m} / \overline{m} < 1$.  Let $\C^+= \{z\in \C: \Im z >0\}$ be the upper half-plane and $\C^-= \{z\in \C: \Im z <0\}$ the lower half-plane. Then,  $\ii \C^+ = \{z\in \C: \Re z <0\}$ is the left half-plane. \citet[Proposition~6.2]{bercovici_voiculescu} showed that the analytic function $\psi_{\tilde \mu_+}$ is a bijection between  $\ii \C^+$ and the domain $\Omega_{\tilde \mu_+}:= \psi_{\tilde \mu_+}(\ii \C^+)$ which contains the interval $(\underline{m} / \overline{m}-1,0)$ and is contained in the disk having this interval as a diameter.  The $\chi$-transform of $\tilde \mu_+$, denoted by $\chi_{\tilde \mu^+}: \Omega_{\tilde \mu_+} \to \ii \C^+$, is defined to be the inverse function of $\psi_{\tilde \mu_+}$. It follows from~\eqref{eq:psi_tilde_mu_+_from_char_profiles_proof} that
$$
\chi_{\tilde \mu_+}(z) = -\eee^{g'((1 + z)\overline{m})}, \quad  z\in (\underline{m} / \overline{m}-1,0),
\qquad
\eee^{g'(y)} = - \chi_{\tilde \mu_+}\left(\frac{y}{\overline{m}}-1\right),
\quad
y\in (\underline{m},\overline{m}).
$$
Since $\chi_{\tilde \mu_+}:\Omega_{\tilde \mu_+} \to \ii \C^+$ is an analytic bijection defined on  $\Omega_{\tilde \mu_+} \supseteq (\underline{m} / \overline{m}-1,0)$,  we can extend $\eee^{g'}$ to an analytic bijection between the domain $\cD:= \{(1 + z)\overline {m}: z\in \Omega_{\tilde \mu_+}\}\supseteq (\underline{m},\overline{m})$ and the right half-plane $-i\C^+$.
It remains to prove that the matrix~\eqref{eq:nevanlinna_pick_matrix_proof_char_profiles} is positive definite. It follows from the definition of the Cauchyt transform that  $s\mapsto G_{\mu}(1/s)$ is a map from $\C^+$ to $\C^+$. By the Nevanlinna--Pick theorem (see Theorem~3.3.3 on p.~105 in~\cite{akhiezer_book}), for every $s_1,\ldots, s_\ell\in \C^+$, the matrix
\begin{equation}\label{eq:nevanlinna_pick_matrix_proof_char_profiles_1}
\left(\frac{G(1/s_j) - \overline{G(1/s_k)}}{s_j - \overline{s_k}}\right)_{j,k=1}^\ell
\end{equation}
is positive semi-definite. Now, we take $s_j:= \eee^{g'(\overline{y_j})}$, where $y_1,\ldots, y_\ell \in \cD$ satisfy $\Re y_1>0,\ldots, \Re y_\ell>0$ and observe that $s_j\in \C^+$. Indeed, since $\eee^{g'}$ is real on $(\underline{m}, \overline{m})$, it maps $\cD \cap \C^-$ either to a subset of $\C^+$ or to a subset of $\C^-$, but since $\eee^{g'}$ is decreasing on $(\underline{m}, \overline{m})$, the former possibility occurs. By the identity $G(1/s_j) = G(\eee^{-g'(\overline{y_j})}) = \eee^{g'(\overline{y_j})} \eee^{-g'(\overline{y_j})} G(\eee^{-g'(\overline{y_j})}) = \eee^{g'(\overline{y_j})} \overline{y_j}$, the matrices~\eqref{eq:nevanlinna_pick_matrix_proof_char_profiles_1} and~\eqref{eq:nevanlinna_pick_matrix_proof_char_profiles} are equal, up to conjugation, and the proof is complete.

Conversely, let $g:(\underline{m}, \overline{m})\to \R$ satisfy~(i) and~(ii). Since $y\mapsto \eee^{-g'(y)}$ defines an analytic bijection between $\cD$ and $\{z\in \C: \Re z >0\}$, we can define an analytic function $G: \{z\in \C: \Re z >0\} \to \C$ by $G(\eee^{-g'(y)}) = \eee^{g'(y)} y$, $y\in \cD$. By the Nevanlinna--Pick theorem, the positive semi-definite property of the matrix~\eqref{eq:nevanlinna_pick_matrix_proof_char_profiles} implies that the function $s\mapsto G(1/s)$, defined originally for $\Re s>0$,
$\Im s>0$, can be extended to an analytic function defined on the upper half-plane $\C^+$ and mapping $\C^+$ to $\C^+$. We can also extend this function to $\bC^-$ by conjugation. Altogether, this defines an analytic function $G: \C \backslash(-\infty, 0] \to \C$ such that $G(\C^+) \subset \C^-$ and $G$ is positive on $(0,\infty]$. By~\cite[p.~127]{akhiezer_book}, $G$ has a representation $G(t) = \alpha_0 + \int_{(-\infty, 0]} \frac{\tau(\dd u)}{t-u}$ for some $\alpha_0\geq 0$ and a (possibly infinite) measure $\tau$ on $(-\infty,0]$ with $\int_{(-\infty,0]} \frac{\tau(\dd u)}{1+|u|} <\infty$. Recalling that  $G(\eee^{-g'(y)}) = \eee^{g'(y)} y$, $y\in (\underline{m}, \overline{m})$, and letting $y$ approach the boundary points of this interval, we get $G(t) \sim \overline{m}/t$ as $t\to + \infty$ and $G(t) \sim \underline{m}/t$ as $t\to +0$. This implies that $\alpha_0=0$,  $\tau((-\infty, 0]) = \overline{m}$ and $\tau (\{0\}) = \underline{m}$. Defining $\mu = \tau  +  (1-\overline{m}) \delta_{-\infty}$, we obtain that $G$ is the Cauchy transform of the probability measure $\mu$ on $[-\infty, 0]$. Also, by construction, $t\mapsto t G(t)$, $t>0$,  is the inverse of $y\mapsto \eee^{-g'(y)}$, $y\in (\underline{m}, \overline{m})$. Let now $(P_n(x))_{n\in \N}$ be any sequence of nonpositive-rooted polynomials with $\lsem P_n\rsem_n \to \mu$ as $n\to\infty$ and $P_n(1) = 1$. By Theorem~\ref{theo:zeros_imply_exp_profile}, $g$ is the profile of this sequence.

\section{Proof of the results on the repeated differentiation}\label{sec:proof_repeated}
\subsection{Proof of Lemma~\ref{lem:q_n_m_roots_real}.}
By Rolle's theorem, if $P$ is a real-rooted polynomial, then $\mathcal{A}_{a,b}P$ is also real-rooted. Hence, $T_{n,\ell}^{(a,b)}(z) = z^{-\ell\Delta}\mathcal{A}_{a,b}^\ell ((z-1)^n)$ is real-rooted. Since
$(-1)^n T_{n,\ell}^{(a,b)}(-z)$ has only nonnegative coefficients, all roots of $T_{n,\ell}^{(a,b)}$ must be
nonnegative. The claim on the multiplicity of a root at $0$ follows from~\eqref{eq:q_n_m_definition} upon noticing that the smallest index $j$ in the set $\mathcal{I}_{\ell}$ is equal to $b$ is $\Delta \geq 0$ or $a-\ell \Delta$ if $\Delta \leq 0$.

\subsection{Proof of Theorem~\ref{thm:q_n_m_measures_converge}}
We apply Theorem~\ref{theo:exp_profile_implies_zeros} to the polynomials $P_n(z):=(-1)^n T_{n,\ell}^{(a,b)}(-z)$ which have only nonpositive roots by Lemma~\ref{lem:q_n_m_roots_real}. The exponential profile of $(P_n)_{n\in \N}$ can be calculated as follows. If $\alpha\in (0,1)$ and $\alpha+\Delta \kappa<0$, then $[z^{\lfloor\alpha n\rfloor}]T_{n,\ell}^{(a,b)}(z)=0$ for all large enough $n$ by the definition of $T_{n,\ell}^{(a,b)}$. On the other hand, the assumption $1+\Delta \kappa>0$ implies that $\mathcal{I}_{\Delta,\kappa}:=(0,1)\cap (-\Delta \kappa,+\infty)\neq \varnothing$. Thus, for every $\alpha\in\mathcal{I}_{\Delta,\kappa}$, we have
\begin{align}
g(\alpha)&:=\lim_{n\to\infty}\frac{1}{n}\log\left(\binom{n}{\lfloor n\alpha\rfloor}
\frac{1}{n^{\ell b}} \frac{\lfloor n\alpha\rfloor!}{(\lfloor n\alpha\rfloor-b)!} \frac{(\lfloor n\alpha\rfloor+\Delta)!}{(\lfloor n\alpha\rfloor-b+\Delta)!}\ldots \frac{(\lfloor n\alpha\rfloor+(\ell-1)\Delta)!}{(\lfloor n\alpha\rfloor-b + (\ell-1)\Delta)!}\right)\notag\\
&=-\alpha\log \alpha-(1-\alpha)\log(1-\alpha)-\lim_{n\to\infty}\left(\ell b\frac{\log n}{n}-\frac 1n \sum_{j=0}^{\ell-1}\log\frac{(\lfloor n\alpha\rfloor+j\Delta)!}{(\lfloor n\alpha\rfloor-b+j\Delta)!}\right)\notag\\
&=-\alpha\log \alpha-(1-\alpha)\log(1-\alpha)+\lim_{n\to\infty}\sum_{k=0}^{b-1}\left[\frac{1}{n}\sum_{j=0}^{\ell-1}\log \frac{\lfloor n\alpha\rfloor+j\Delta-k}{n}\right].\label{eq:calc_rep_diff}
\end{align}
For every fixed $k\in\{0,\ldots,b-1\}$, the term in the square brackets converges to the same limit
$\int_0^{\kappa} \log(\alpha+\Delta s)\dd s$, as $n\to\infty$. This yields
$$
g(\alpha)=-\alpha\log \alpha-(1-\alpha)\log(1-\alpha)+b\int_0^{\kappa} \log(\alpha+\Delta s)\dd s,\quad \alpha\in \mathcal{I}_{\Delta,\kappa},
$$
and thereupon
$$
\eee^{-g^{\prime}(\alpha)}=
\begin{cases}
\frac{\alpha^{a/\Delta}}{(1-\alpha)(\alpha+\Delta \kappa)^{b/\Delta}},&\Delta\neq 0,\\
\frac{\alpha}{1-\alpha}\eee^{-b\alpha^{-1}\kappa},&\Delta=0,
\end{cases}
\quad \alpha\in \mathcal{I}_{\Delta,\kappa}.
$$
By Theorem~\ref{theo:exp_profile_implies_zeros} applied with $(\underline{m},\overline{m})=\mathcal{I}_{\Delta,\kappa}$, this implies the existence of the weak limit $\nu_{a,b;\kappa}$ of $\lsem T_{n,\ell}^{(a,b)}(z)\rsem_n$ on $[0,\infty)$ such that $\nu_{a,b;\kappa}(\{0\})=\max(0,-\Delta \kappa)$.
By Lemma \ref{lem:r_and_s_transform_vs_profile}, the $S$-transform of $\nu_{a,b;\kappa}$ is given by
\begin{align}\label{eq:S-trafo-nuab}
S_{a,b;\kappa}(t)=\begin{cases}
\left(1+\frac{\Delta \kappa}{t+1}\right)^{\frac{b}{\Delta}}
,&\Delta> 0,\\
\left(\frac{t+1}{1+t+\Delta \kappa}\right)^{\frac{b}{-\Delta}}
,&\Delta< 0,\\
\eee^{\frac{b\kappa}{t+1}},&\Delta=0,
\end{cases}
\quad t+1\in \mathcal{I}_{\Delta,\kappa}.
\end{align}
Note that the analytic expression in the cases $\Delta>0$ and $\Delta<0$ is actually the same.
For $\Delta=0$, $\eee^{\frac{b\kappa}{t+1}}$ is the $S$-transform of an infinitely $\boxtimes$-divisible distribution called `free multiplicative Poisson', see (as a limit of) \cite[Lemma 7.2]{bercovici_voiculescu_levy_hincin} and \cite[Theorem 2.2]{Arizmendi_Hasebe}.
For $\Delta<0$, the $S$-transform is a $\frac b {-\Delta}>1$ power of another function, hence $\nu_{a,b;\kappa}$ is the $\boxtimes {\frac b {-\Delta}}$ free self-convolution of another measure with $S$-transform $\frac{t+1}{1+t+\Delta \kappa}$, see \cite[Theorem 2.6]{BB05_semigroup} for existence of the free multiplicative self-convolution. This measure is $-\Delta\kappa\delta_0+(1+\Delta\kappa)\delta_1$, which one may easily check or look up at \cite[Lemma 4.1]{bercovici_voiculescu_levy_hincin}. For $\Delta>0$, the function $v(z)=\log(S_{a,b;\kappa})(z)$, $z\not\in (-\infty,-1]\cup[0,\infty)$, satisfies the assumptions of \cite[Theorem 7.5 (1)]{bercovici_voiculescu_levy_hincin} and hence is $\boxtimes$-infinitely divisible.

In the case $\Delta\neq 0$ and arbitrary $a,b\in\N_0$, there seems to be no simple closed form for the inverse of the algebraic function $\alpha\mapsto \eee^{-g^{\prime}(\alpha)}$. However, the inversion is possible if $\Delta=0$ and also in the important case $a=0, b=1$, which corresponds to repeated differentiation.

\noindent
{\sc Case $a=0$ and $b=1$.} In this case $\eee^{-g^{\prime}(\alpha)}=(\alpha-\kappa)/(1-\alpha)$. Formula~\eqref{eq:S-trafo-nuab} implies that $S_{0,1;\kappa}(t)=\frac{t+1}{1+t-\kappa}$. Hence $\nu_{0,1;\kappa}=\kappa\delta_0+(1-\kappa)\delta_1$.

\noindent
{\sc Case $\Delta=0$.} In this case, the function $\alpha\mapsto \frac{\alpha}{1-\alpha}\eee^{-b\alpha^{-1}\kappa}$ can be inverted in terms of the function $W_0$ using elementary manipulations. This yields a formula for the Cauchy transform of the reflected measure $\nu_{a,a;\kappa}(-(\cdot))$. Then formula~\eqref{eq:theo:repeated_action_stieltjes} follows upon replacing $t$ by $-t$ and changing the signs. The domain is determined from the constraint $a\kappa \eee^{-a\kappa}/t\notin (-\infty,-1/\eee]$ ensuring analyticity of $W_0(a\kappa \eee^{-a\kappa}/t)$. Formula~\eqref{eq:theo:repeated_action_density} follows  by the Stieltjes--Perron inversion, whereas the mass of the atom at $1$ is a consequence of the obvious fact that the multiplicity of the root $1$ of $T_{n,m}^{(a,b)}(z)$ is exactly $\max(n-am,0)$.

\subsection{Proof of Proposition~\ref{eq:prop_rep_diff_stiel}}
We shall apply Theorem~\ref{theo:repeated_diff_general} to the polynomials
\begin{equation}\label{eq:Stirling_polynomials1}
S_n(x):=x(x+1)\cdots(x+n-1)=\sum_{k=1}^{n}\stirling{n}{k}x^k,
\end{equation}
where $\stirling{n}{k}$ denote the (signless) Stirling numbers of the first kind.
Obviously, the sequence of probability measures $\lsem S_n(nx)\rsem_n$ converges weakly to the uniform distribution $\mu_S=\unif_{[-1,0]}$ on $[-1,0]$. By the classic logarithmic asymptotic
of the Stirling numbers of the first kind, see, for example \cite[Eq.~(5.7)]{moser_wyman}, we have
$$
g_S(\alpha):=\lim_{n\to\infty}\frac{1}{n}\log \left(\stirling{n}{\lfloor \alpha n\rfloor} n^{\lfloor \alpha n\rfloor-n}\right)=-1+\alpha+(1-\alpha)\log \alpha + w_S+(\alpha-1)\log w_S,\quad \alpha\in (0,1),
$$
where $w_S=w_S(\alpha)$ is a unique solution in $(0,\infty)$ of the equation
\begin{equation}\label{eq:w_0_define}
\frac{w_S}{\eee^{w_S}-1}=\alpha.
\end{equation}
We first calculate the exponential profile $g_{S;\kappa}(\alpha)$ of $n^{-n}(\tfrac{\dd}{\dd z})^{\lfloor \kappa n\rfloor}S_n(nz)$. Using similar calculation as in~\eqref{eq:calc_rep_diff}, we obtain
$$
g_{S;\kappa}(\alpha)=g_{S}(\alpha+\kappa)+\int_{0}^{\kappa}\log(\alpha+s)\dd s,\quad \alpha\in (0,1-\kappa).
$$
Therefore,
\begin{multline*}
\eee^{-g^{\prime}_{S;\kappa}(\alpha)}=\eee^{-g^{\prime}_{S}(\alpha+\kappa)}\frac{\alpha}{\alpha+\kappa}=\frac{\alpha}{w_S(\alpha+\kappa)}=\frac{\alpha+\kappa}{w_S(\alpha+\kappa)}-\frac{\kappa}{w_S(\alpha+\kappa)}\\
\overset{\eqref{eq:w_0_define}}{=}\frac{1}{\eee^{w_S(\alpha+\kappa)}-1}-\frac{\kappa}{w_S(\alpha+\kappa)}=:\widehat{Y}_{\kappa}(w_S(\alpha+\kappa)),\quad \alpha\in (0,1-\kappa),
\end{multline*}
where we have set $\widehat{Y}_{\kappa}(t)=(\eee^t-1)^{-1}-\kappa/t$. Thus, the Cauchy transform of the push-forward of $\unif_{[0,1]}\boxtimes \widehat{\nu}_{0,1;\kappa}$ under $z\mapsto -z$ is
$$
t\mapsto \frac{1}{t}\left(\widehat{Y}_{\kappa}(w_S(\cdot+\kappa))\right)^{\leftarrow}(t)=\frac{1}{t}(w_S^{\leftarrow}(\widehat{Y}_{\kappa}^{\leftarrow}(t))-\kappa)\overset{\eqref{eq:w_0_define}}{=}\frac{1}{t}\left(\frac{\widehat{Y}_{\kappa}^{\leftarrow}(t)}{\eee^{\widehat{Y}_{\kappa}^{\leftarrow}(t)}-1}-\kappa\right)=\widehat{Y}_{\kappa}^{\leftarrow}(t),
$$
where we have used $t=\widehat{Y}_{\kappa}(\widehat{Y}_{\kappa}^{\leftarrow}(t))=(\eee^{\widehat{Y}_{\kappa}^{\leftarrow}(t)}-1)^{-1}-\kappa/\widehat{Y}_{\kappa}^{\leftarrow}(t)$ for the last equality.  In other words, we have shown that
$$
\mathbb{E}\left(\frac{1}{t+\xi_{\kappa}}\right)=\widehat{Y}_{\kappa}^{\leftarrow}(t).
$$
By definition of $\mu_{\kappa}$,
\begin{multline*}
G_{\mu_{\kappa}}(t)=\mathbb{E}\left(\frac{1}{t-(2\xi_{\kappa}-1)}\right)=-\frac{1}{2}\mathbb{E}\left(\frac{1}{\xi_{\kappa}-(t+1)/2}\right)\\
=-\frac{1}{2}\widehat{Y}_{\kappa}^{\leftarrow}(-(t+1)/2)=(-2\widehat{Y}_{\kappa}(-2(\cdot))-1)^{\leftarrow}(t)=Y^{\leftarrow}_{\kappa}(t).
\end{multline*}
The points $\pm z_{\kappa}$ are critical points of the function $t\mapsto Y_{\kappa}(t)$, for every fixed $\kappa\in (0,1)$, see Figure~\ref{fig:coth}. They are defined by the equation
$$
Y_{\kappa}^{\prime}(z)=-\frac{1}{\sinh^2(z)}+\frac{\kappa}{z^2}=0,
$$
which is equivalent to~\eqref{eq:sinh_equation}. Thus, $G_{\mu_{\kappa}}$ cannot be analytically continued to $[Y_{\kappa}(-z_{\kappa}),Y_{\kappa}(z_{\kappa})]$ which means that $\mu_{\kappa}$ is supported by $[Y_{\kappa}(-z_{\kappa}),Y_{\kappa}(z_{\kappa})]$.

\begin{figure}
\includegraphics[scale=0.5]{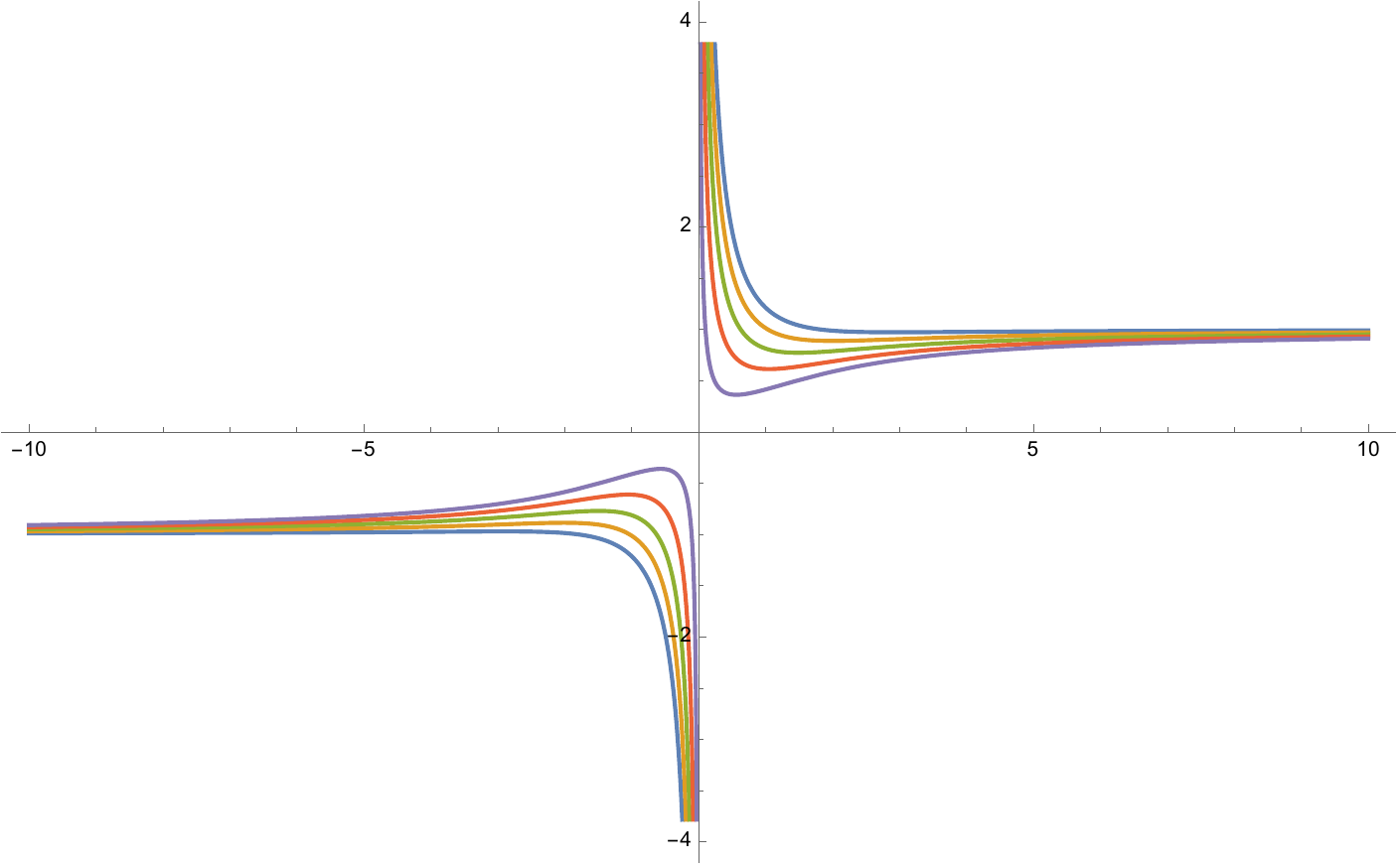}
\caption{Graphs of the functions $t\mapsto Y_{\kappa}(t)=\coth(t)-\kappa/t$ for $\kappa=0.1,0.3,0.5,0.7,0.9$.}
\label{fig:coth}
\end{figure}

\subsection{Additive free convolution for measures with atoms at \texorpdfstring{$-\infty$}{-infinity}}
The next proposition expresses $\mu_1\boxplus \mu_2$ for measures which may have atoms at $-\infty$ via the usual operation $\boxplus$ and the free self-convolution $\mu^{\boxplus \alpha}$, $\alpha>1$; see~\cite[p.228--233]{nica_speicher_book} and ~\cite[Section~1.2]{shlyakhtenko_tao} for its definition and interpretation as a compression by a free projection.
\begin{proposition}\label{prop:free_add_conv_with_infty}
Let $\mu_1$ and $\mu_2$ be probability measures on $[-\infty, A]$ with $A\in \R$, $\overline{m}_i = 1 - \mu_i(\{-\infty\})$, $i=1,2$, and suppose that $\overline{m}_1 + \overline{m}_2 >1$. Write $\mu_i = (1-\overline{m}_i) \delta_{-\infty} + \overline{m}_i \widetilde \mu_i$, $i=1,2$, where $\widetilde \mu_i$ is a probability measure on $(-\infty, A]$ (without an atom at $-\infty$). Then,
$$
\mu_1 \boxplus \mu_2 = (\overline{m}_1 + \overline{m}_2 - 1) \left(\widetilde \mu_1^{\boxplus \frac{\overline{m}_1}{\overline{m}_1 + \overline{m}_2-1}}\left(\frac{\overline{m}_1}{\overline{m}_1 + \overline{m}_2 - 1}\,\, (\cdot) \right) \boxplus \widetilde \mu_2^{\boxplus \frac{\overline{m}_2}{\overline{m}_1 + \overline{m}_2-1}}\left(\frac{\overline{m}_2}{\overline{m}_1 + \overline{m}_2 - 1}\,\, (\cdot) \right) \right) + (2-\overline{m}_1 -\overline{m}_2) \delta_{-\infty}.
$$
\end{proposition}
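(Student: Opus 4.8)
The plan is to verify the asserted identity at the level of $R$-transforms and then invoke uniqueness, exactly as $\boxplus$ was \emph{defined} in Proposition~\ref{prop:free_additive_convol_existence} through the additivity rule $R_{\mu_1\boxplus\mu_2}=R_{\mu_1}+R_{\mu_2}$ on a common interval $(0,\eps)$. Throughout set $p:=\overline{m}_1+\overline{m}_2-1$, which lies in $(0,1]$ by hypothesis and because each $\overline{m}_i\le 1$. First I would record a scaling lemma for a decomposition $\mu=\overline{m}\,\widetilde\mu+(1-\overline{m})\delta_{-\infty}$. Since the atom at $-\infty$ does not contribute to the Cauchy transform, $G_\mu=\overline{m}\,G_{\widetilde\mu}$ on $(A,\infty)$, so $G_\mu^{\leftarrow}(s)=G_{\widetilde\mu}^{\leftarrow}(s/\overline{m})$, and the definition~\eqref{eq:summary_R} of $R$ gives
\[
R_\mu(s)=\overline{m}\,R_{\widetilde\mu}\!\left(\tfrac{s}{\overline{m}}\right)+(\overline{m}-1),
\]
for small $s>0$ (consistent with $R_\mu(0+)=\overline{m}-1=-\mu(\{-\infty\})$). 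Applying this to $\mu_1$ and $\mu_2$ and summing yields
\[
R_{\mu_1\boxplus\mu_2}(s)=\overline{m}_1\,R_{\widetilde\mu_1}\!\left(\tfrac{s}{\overline{m}_1}\right)+\overline{m}_2\,R_{\widetilde\mu_2}\!\left(\tfrac{s}{\overline{m}_2}\right)+(\overline{m}_1+\overline{m}_2-2).
\]

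Next I would compute the $R$-transform of the candidate measure on the right-hand side. In the normalization $R_\mu(s)=sG_\mu^{\leftarrow}(s)-1$ one has the elementary rules $R_{D_c\nu}(s)=R_\nu(cs)$ for the dilation $D_c\nu$ (the law of $cX$ with $X\sim\nu$) and $R_{\nu^{\boxplus\alpha}}(s)=\alpha\,R_\nu(s)$ for the free self-convolution of order $\alpha\ge 1$; both follow at once from the corresponding classical statements via $R_\mu(s)=s\,\mathcal{R}_\mu(s)$. Since $\overline{m}_i/p\ge 1$ (equivalent to $\overline{m}_{3-i}\le 1$), the self-convolutions $\widetilde\mu_i^{\boxplus \overline{m}_i/p}$ are well defined, and the measure $\eta_i:=\widetilde\mu_i^{\boxplus \overline{m}_i/p}\!\big(\tfrac{\overline{m}_i}{p}(\cdot)\big)$ is the dilation of $\widetilde\mu_i^{\boxplus \overline{m}_i/p}$ by the positive factor $p/\overline{m}_i$. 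The two rules then give $R_{\eta_i}(s)=\tfrac{\overline{m}_i}{p}R_{\widetilde\mu_i}\!\big(\tfrac{p}{\overline{m}_i}s\big)$. Applying the scaling lemma once more with $\overline{m}=p$ and $\widetilde\mu=\eta_1\boxplus\eta_2$ to the measure $p\,(\eta_1\boxplus\eta_2)+(1-p)\delta_{-\infty}$, whose $R$-transform equals $p\,R_{\eta_1\boxplus\eta_2}(s/p)+(p-1)$, I obtain after the substitution $s\mapsto s/p$ exactly $\sum_i\overline{m}_i R_{\widetilde\mu_i}(s/\overline{m}_i)+(\overline{m}_1+\overline{m}_2-2)$, matching the display for $R_{\mu_1\boxplus\mu_2}$ above.

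Finally I would close by uniqueness. The two $R$-transforms coincide on a common interval $(0,\eps)$, which by~\eqref{eq:summary_R} means the two Cauchy transforms agree on a real interval $(T,\infty)$; by analyticity off $(-\infty,\,\cdot\,]$ they agree everywhere, and the atoms at $-\infty$ agree through~\eqref{eq:atom_at_infinity_over_R}, so the two probability measures are equal, giving the claimed formula (with total atom $1-p=2-\overline{m}_1-\overline{m}_2$ at $-\infty$). The main obstacle is not the algebra but the bookkeeping of domains and supports: I must check that $\eta_1\boxplus\eta_2$ is a genuine probability measure supported on some $(-\infty,B]$ — so that the Bercovici--Voiculescu free convolution of measures bounded above applies and the Cauchy transform determines it — that the three $R$-transforms are simultaneously defined on one interval $(0,\eps)$, and that the positive dilation factors $p/\overline{m}_i$ preserve one-sided boundedness. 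Once these are verified the identity of $R$-transforms is purely formal and the proposition follows.
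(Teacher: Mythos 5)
Your proposal is correct, but it takes a genuinely different route from the paper's own proof. The paper stays inside its polynomial framework: it chooses polynomials $P_n^{(i)}$ of degree $d_{n;i}=\lfloor\overline{m}_i n\rfloor$ whose empirical root distributions, normalized by $d_{n;i}$, converge to $\widetilde\mu_i$ (so that normalizing by $n$ creates exactly the atoms $1-\overline{m}_i$ at $-\infty$); it then invokes the Marcus--Spielman--Srivastava degree-reduction identity, which writes $P_n^{(1)}\boxplus_n P_n^{(2)}$ as a constant multiple of $\big((\dd/\dd x)^{n-d_{n;2}}P_n^{(1)}\big)\boxplus_{d_{n;1}+d_{n;2}-n}\big((\dd/\dd x)^{n-d_{n;1}}P_n^{(2)}\big)$, identifies the limits of the differentiated polynomials via Corollary~\ref{cor:repeated_diff}, and applies Theorem~\ref{theo:finite_free_additive_conv_free_additive} to both sides; the dilation factors $\overline{m}_i/(\overline{m}_1+\overline{m}_2-1)$ thus appear as degree ratios. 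You instead argue entirely on the transform side: since the extended $\boxplus$ is \emph{defined} by $R$-additivity in Proposition~\ref{prop:free_additive_convol_existence} (which is established before this proposition, so there is no circularity), it suffices to check that the right-hand side has $R$-transform $R_{\mu_1}+R_{\mu_2}$, and your three rules do exactly that: the atom-at-$-\infty$ scaling lemma $R_\mu(s)=\overline{m}\,R_{\widetilde\mu}(s/\overline{m})+\overline{m}-1$ (correct, and consistent with $R_\mu(0+)=-\mu(\{-\infty\})$ as noted after Proposition~\ref{prop:free_additive_convol_existence}), the dilation rule $R_{D_c\nu}(s)=R_\nu(cs)$, and $R_{\nu^{\boxplus\alpha}}=\alpha R_\nu$; your algebra is right, and the uniqueness step via inversion of \eqref{eq:summary_R}, analytic continuation of Cauchy transforms off $(-\infty,B]$, and \eqref{eq:atom_at_infinity_over_R} is sound. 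What each approach buys: yours is shorter and exposes the proposition as a purely formal $R$-transform identity, but it imports from the classical theory the existence of fractional powers $\nu^{\boxplus\alpha}$, $\alpha\geq 1$, for measures that are unbounded below, the fact that the classical $\boxplus$ of bounded-above measures satisfies $R$-additivity on a \emph{real} interval (the same fact the paper itself uses in the remark following Proposition~\ref{prop:free_additive_convol_existence}), and the support bookkeeping you flag --- that boundedness above is preserved by $\boxplus$ and by $\boxplus\,\alpha$, which is standard (truncate below and use weak continuity of $\boxplus$, or an operator compression argument), so none of your flagged items is an actual obstruction. The paper's route avoids all of these external inputs: Corollary~\ref{cor:repeated_diff} manufactures the fractional self-convolutions internally as limits of roots of differentiated polynomials, so the entire proof remains within the finite free / exponential profile machinery that the paper develops.
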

\begin{proof}
For $i=1,2$, let $P_n^{(i)}(x)$ be some polynomial of degree $d_{n;i}:= \lfloor\overline{m}_i n\rfloor$ such that $\lsem P_n^{(i)}\rsem_{d_{n;i}} \to \widetilde \mu_i$ weakly, as $n\to\infty$. Then, $\lsem P_n^{(i)}\rsem_n \to \overline{m}_i \widetilde \mu_i + (1-\overline{m}_i) \delta_{-\infty}$ by our convention how to handle polynomials of degree less than $n$ in $\lsem \cdot \rsem_n$; see Definition~\ref{def:empirical_distr}.
Now, we recall the identity
$$
P_n^{(1)}(x) \boxplus_n P_{n}^{(2)}(x) =  \frac{\left((\dd /\dd x)^{n-d_{n;2}} P_n^{(1)}(x)\right) \boxplus_{d_{n;1}+d_{n;2}-n}  \left((\dd /\dd x)^{n-d_{n;1}} P_n^{(2)}(x)\right)}{n(n-1) \ldots (d_{n;1}+d_{n;2}-n+1)}.
$$
On the one-hand side, $\lsem P_n^{(1)}(x) \boxplus_n P_{n}^{(2)}(x)\rsem_n \to \mu_1 \boxplus \mu_2$. On the other hand,
$$
\lsem(\dd /\dd x)^{n-d_{n;2}} P_n^{(1)}(x)\rsem_{d_{n;1} + d_{n;2} - n} \to \widetilde \mu_1^{\boxplus \frac{\overline{m}_1}{\overline{m}_1 + \overline{m}_2 - 1}} \left(\frac{\overline{m}_1}{\overline{m}_1 + \overline{m}_2 - 1}\,\, (\cdot) \right).
$$
by Corollary~\ref{cor:repeated_diff} with $n$ replaced by $d_{n;1}$ and with $\kappa =\lim_{n\to\infty} \frac{n-d_{n,2}}{d_{n;1}}= \frac{1-\overline{m}_2}{\overline{m}_1}$, $\frac 1 {1-\kappa} = \frac{\overline{m}_1}{\overline{m}_1 + \overline{m}_2 - 1}$.
Similarly,
$$
\lsem(\dd /\dd x)^{n-d_{n;1}} P_n^{(2)}(x)\rsem_{d_{n;1} + d_{n;2} - n} \to \widetilde \mu_2^{\boxplus \frac{\overline{m}_2}{\overline{m}_1 + \overline{m}_2 - 1}} \left(\frac{\overline{m}_2}{\overline{m}_1 + \overline{m}_2 - 1}\,\, (\cdot) \right).
$$
Let $R_n:=((\dd /\dd x)^{n-d_{n;2}} P_n^{(1)}(x)) \boxplus_{d_{n;1}+d_{n;2}-n}  (\dd /\dd x)^{n-d_{n;1}} P_n^{(2)}(x)$ and observe that $\deg R_n = d_{n;1}+d_{n;2}-n$.  It follows Definition~\ref{def:empirical_distr} and Theorem~\ref{theo:finite_free_additive_conv_free_additive} that
\begin{align*}
\lsem R_n
\rsem_n
&=
\frac{d_{n;1}+d_{n;2}-n}{n} \lsem R_n
\rsem_{d_{n;1}+d_{n;2}-n} + \frac{2n- d_{n;1}- d_{n;2}}{n} \delta_{-\infty}
\\
&\toweak
(\overline{m}_1 + \overline{m}_2 - 1) \left(\mu_1^{\boxplus \frac{\overline{m}_1}{\overline{m}_1 + \overline{m}_2 - 1}} \left(\frac{\overline{m}_1}{\overline{m}_1 + \overline{m}_2 - 1}\,\, (\cdot) \right) \boxplus \widetilde \mu_2^{\boxplus \frac{\overline{m}_2}{\overline{m}_1 + \overline{m}_2 - 1}} \left(\frac{\overline{m}_2}{\overline{m}_1 + \overline{m}_2 - 1}\,\, (\cdot) \right) \right)\\
&\hspace{11cm} + (2-\overline{m}_1 -\overline{m}_2) \delta_{-\infty}.
\end{align*}
Since $\lsem P_n^{(1)} \boxplus_n P_{n}^{(2)}\rsem_n = \lsem R_n\rsem_n$, we get the claim.
\end{proof}

\section*{Acknowledgement}
We thank Andrew Campbell and Brian Hall for several insightful comments.
ZK was supported by the Deutsche Forschungsgemeinschaft (DFG, German Research Foundation) under Germany's Excellence Strategy EXC 2044 - 390685587, Mathematics M\"unster: \emph{Dynamics-Geometry-Structure}. ZK and JJ have been supported by the DFG priority program SPP 2265 \emph{Random Geometric Systems}. A part of this work
was done while AM was visiting the University of M\"{u}nster in July-September 2024 as a M\"{u}nster research fellow.

\bibliographystyle{plainnat}
\bibliography{bib_zeros_profiles_polys}
\end{document}